\newtheorem{assumption}{Assumption}[section]
\newtheorem{theorem}{Theorem}[section]
\newtheorem{counterexample}{Counterexample}[section]
\newtheorem{example}{Example}[section]
\newtheorem{corollary}{Corollary}[section]
\newtheorem{definition}{Definition}[section]
\newtheorem{lemma}{Lemma}[section]
\patchcmd{\@maketitle}{\LARGE \@title}{\fontsize{20}{19.2}\selectfont\@title}{}{}
\begin{document}
	%\doublespacing
	%\vspace{0cm}
	\title{Stochastic orderings between two finite mixtures with inverted-Kumaraswamy distributed components}
	\author{{\large{Raju {\bf Bhakta}$^{1}$\thanks {Email address: bhakta.r93@gmail.com,~raju\_bhakta@nitrkl.ac.in,~raju\_bhakta.maths@yahoo.com},~Pradip {\bf Kundu}$^{2}$\thanks {Email address: kundu.maths@gmail.com},~Suchandan {\bf Kayal}$^{3}$\thanks {Email address (corresponding author): kayals@nitrkl.ac.in,~suchandan.kayal@gmail.com}~and~Morad {\bf Alizadeh}$^{4}$\thanks {Email address: m.alizadeh@pgu.ac.ir,~moradalizadeh78@gmail.com}}}}
	\maketitle
	\noindent{\it$^{1,3}$Department of Mathematics, National Institute of Technology Rourkela, Rourkela-769008, Odisha, India.}\\
	{\it$^{2}$School of Computer Science and Engineering, XIM University, Bhubaneswar, Odisha, India.}\\
	{\it$^{4}$Department of Statistics, Faculty of Intelligent Systems Engineering and Data Science, Persian Gulf University, Bushehr, 75169, Iran.}
	
	\begin{center}
		\noindent{\bf Abstract}
	\end{center}
	In this paper, we consider two finite mixture models (FMMs), with  inverted-Kumaraswamy distributed components' lifetimes. Several stochastic ordering results between the FMMs have been obtained. Mainly, we focus on three different cases in terms of the heterogeneity of parameters. The usual stochastic order between the FMMs have been established when heterogeneity presents in one parameter as well as two parameters. In addition, we have also studied ageing faster order in terms of the reversed hazard rate between two FMMs when heterogeneity is in two parameters. For the case of heterogeneity in three parameters, we obtain the comparison results based on reversed hazard rate and likelihood ratio orders. The theoretical developments have been illustrated using several examples and counterexamples.
	\\
	\\
	\noindent{\bf Keywords:} FMMs; stochastic orders; weak
	supermajorization; weak submajorization; matrix majorization.
	\\
	\\
	{\bf Mathematics Subject Classification:} 60E15, 90B25.
	
	\section{Introduction}\label{section1}
	The FMMs have been widely used in many study
	areas, including biology, reliability, and survival analysis. As a
	result, both theorists and practitioners have shown a great deal of
	interest in these models. Due to its unique ability to model
	heterogeneous data, whose pattern cannot be produced by a single
	parametric distribution, the mixture model (MM) has acquired a lot
	of appeal. An unknown model is developed by mixing a collection of
	homogeneous subpopulations (infinite) in order to capture this
	heterogeneity. Keep in mind that the mixing is carried out over a
	latent parameter, which is regarded as a random variable (RV) and
	chosen from an unknown mixing distribution. Here, we refer to it as
	mixing proportions or weights throughout this paper. There are many
	circumstances in which FMMs spontaneously occur. For more
	information on the different applications of FMMs, see
	\cite{lindsay1995mixture}, \cite{McLachlan2000},
	\cite{amini2017stochastic}, and \cite{wu2001}. For instance,
	\begin{itemize}
		\item A FMM can be used in reliability theory to model the ``failure time'' of a system. The model assumes that the ``failure time'' is a mixture of two or more distributions, as usually there is more than one reason causing the failures of a
		component or system (\citealp{amini2017stochastic}).
		\item To study the distribution of time to death after a major cardiovascular surgery, FMM is useful. Here, one may consider that the lifetime of such patients after surgery contains three phases of time. In the first phase, that is, immediately after surgery, the death risk is relatively high. In the next phase, the hazard rate remains constant upto some certain time. Then, in the final phase, the risk of death of the patient increases. The convenient way to model this situation is to adopt a MM with three components. In each phase, separate parametric model can be assigned to each (here three) components (\citealp{McLachlan2000}). 
		
		\item A FMM can be used in biological sciences to model the distribution of gene expression levels across different cell types. The model assumes that the gene expression levels are a mixture of two or more distributions, such as a normal distribution and a gamma distribution (\citealp{schork1996mixture} and \citealp{McLachlan2000}).
		
	\end{itemize}
	Also, FMMs are used in a variety of real-life applications, such as clustering, image segmentation, anomaly detection, and speech recognition. They are also used in medical diagnosis, market segmentation, and customer segmentation, etc. In this paper, we have considered two FMMs for inverted-Kumaraswamy ($\mathcal{IK}$) distributed components.

	The topic of stochastic comparisons between two FMMs has been extensively studied. For more details, see \cite{shaked2007stochastic}, \cite{navarro2008likelihood}, \cite{navarro2016stochastic}, \cite{amini2017stochastic}, and so on. \cite{hazra2018stochastic} studied stochastic comparisons of finite mixtures (FMs) where the subpopulations are from semiparametric models, that is, the scale model, proportional hazard rate model, and proportional reversed hazard rate model. \cite{barmalzan2021stochastic} focused on two finite $\alpha$-MMs and established sufficient conditions for comparing two $\alpha$-MRVs. Please see, \cite{asadi2018mixture} for some properties of the $\alpha$-MM. \cite{sattari2021stochastic} investigated MMs with generalized Lehmann distributed components and presented several ordering outcomes. \cite{barmalzan2022orderings} studied two FMMs with location-scale family distributed components and established some stochastic comparison results between them in terms of the usual stochastic order and the reversed hazard rate order. By utilizing the majorization idea, \cite{nadeb2022new} have been studied a stochastic comparison for two FMs in terms of usual stochastic order, hazard rate order, and reversed hazard rate order. \cite{panja2022stochastic} established ordering results between two finite mixture random variables (FMRVs), where the mixing components are based on proportional odds, proportional hazards, and proportional reversed hazards models. \cite{kayal2023some} obtained some ordering results between two FMMs considering general parametric families of distributions. Mainly, the authors established sufficient conditions for usual stochastic order based on $p$-larger order and reciprocally majorization order.Very recently, \cite{bhakta2023stochastic} considered similar general parametric families of distributions as in \cite{kayal2023some}, and then examined various ordering results with respect to usual stochastic order, hazard rate order, and reversed hazard rate order between two FMMs.
	
	Inverted distributions have several applications in various fields,
	including econometrics, life testing, biology, engineering sciences,
	and medicine. Additionally, it is used in reliability theory,
	survival analysis, financial literature, and environmental research.
	For more details on inverted distributions and its applications, see
	\cite{abd2013general}. \cite{abd2017inverted} developed the
	$\mathcal{IK}$ distribution by using the transformation $x=t^{-1}-1$
	from the Kumaraswamy ($\mathcal{K}$) distribution, that is,
	$T\thicksim\mathcal{K}(\alpha,\beta)$, where $\alpha$ and $\beta$
	are the shape parameters. Then $X$ has a $\mathcal{IK}$ distribution
	with cdf and pdf as
	\begin{eqnarray}\label{eq1.2}
		F_X(x)\equiv F(x;\alpha,\beta)=(1-(1+x)^{-\alpha})^{\beta},~x>0,~\alpha>0,~\beta>0
	\end{eqnarray}
	and
	\begin{eqnarray}\label{eq1.3}
		f_X(x)\equiv f(x;\alpha,\beta)=\alpha \beta
		(1+x)^{-\alpha-1}(1-(1+x)^{-\alpha})^{\beta-1},~x>0,~\alpha>0,~\beta>0,
	\end{eqnarray}
	respectively, where $\alpha$ and $\beta$ both are shape parameters. We note that the curves of the pdf and hazard function show that the $\mathcal{IK}$ distribution exhibits a long right tail, compared with other commonly used distributions. As a result, it affects long term reliability predictions, producing optimistic predictions of rare events occurring in the right tail of the distribution compared with other well-known distributions.
	Here, we use the notation $X\thicksim\mathcal{IK}(\alpha,\beta)$ for
	convenience. Many well-known distributions fall under the
	$\mathcal{IK}$ distribution as special cases, e.g., Lomax
	distribution (for $\beta=1$), Beta Type II distribution (for
	$\alpha=1$) and the log-logistic distribution (for $\alpha=\beta=1$)
	(\citealp{abd2017inverted}). Also, using appropriate
	transformations, the $\mathcal{IK}$ distribution can be transformed
	to many well-known distributions such as exponentiated exponential
	and Weibull, generalized uniform, generalized Lomax, beta Type II
	and F-distribution, Burr Type III and log logistic distributions
	(\citealp{abd2017inverted}). 
	%It is discussed in \cite{kayal2021}
	%that $\mathcal{IK}$ distribution is very useful in long term
	%reliability predictions because of its long-right tail.
	\cite{abd2017inverted} showed with many real date sets how well the
	$\mathcal{IK}$ distribution fits those real data.
	
	This paper focuses on the stochastic comparison results between two finite mixture models follow $\mathcal{IK}$ distributed components. The goal of this paper is to obtain sufficient conditions, for which two finite mixture random variables with $\mathcal{IK}$ distributed component lifetimes are comparable in the sense of the usual stochastic order, reversed hazard rate order, likelihood ratio order, and ageing faster order in terms of reversed hazard rate order.
	
	The main contributions and organization of the paper are as follows. In the next section, we present several definitions and lemmas which are essential to obtain our main results. Section \ref{section3} contains three subsections, with a description on the proposed model. In Subsection \ref{subsection3.1}, we establish usual stochastic order between two FMMs based on the concepts of the weak supermajorization and weak submajorization orders. Subsection \ref{subsection3.2} deals with the ordering results when there is heterogeneity in two parameters. Here, we obtain comparison results with respect to the usual stochastic order and ageing faster order in terms of the reversed hazard rate. In Subsection \ref{subsection3.3}, we examine ordering results between the FMMs with respect to the reversed hazard rate and likelihood ratio orders. Besides the theoretical contributions, we present many examples and counterexamples for the validation and justification.

	Throughout the paper, the term ``increasing" refers
	``nondecreasing", while the term ``decreasing" will mean
	``nonincreasing". For any differentiable function $\eta(\cdot)$, we
	write $\eta^{\prime}(t)$ to represent the first order derivative of
	$\eta(t)$ with respect to $t$. The partial derivative of
	$\xi(\boldsymbol{x})$ with respect to its $k$th component is denoted
	as ``$\xi_{(k)}(\boldsymbol{x})=\partial\xi(\boldsymbol{x})/\partial
	x_k$", for $k=1,\ldots,n$. Also, ``$\stackrel{sign}{=}$" is used to
	indicate that the signs on both sides of an equality are the same.
	We use the notation $\mathbb{R}=(-\infty,+\infty)$,
	$\mathbb{R}^+=[0,+\infty)$, $\mathbb{R}_n=(-\infty,+\infty)^n$, and
	$\mathbb{R}_n^+=[0,+\infty)^n$, respectively.
	
	\section{The basic definitions and some prerequisites}\label{section2}
	This section presents some preliminary definitions and results, which are essential to establish our main results in subsequent section. Let $U$ and $V$ be two continuous and non-negative independent random variables with pdfs $f_U(\cdot)$ and $g_V(\cdot)$, cdfs $F_U(\cdot)$ and $G_V(\cdot)$, sfs $\bar{F}_U(\cdot)=1-F_U(\cdot)$ and $\bar{G}_V(\cdot)=1-G_V(\cdot)$, and rhs $\tilde{r}_U(\cdot)=f_U(\cdot)/F_U(\cdot)$ and $\tilde{r}_V(\cdot)=g_V(\cdot)/G_V(\cdot)$, respectively.
	\begin{definition}\label{definition2.1}
		The random variable $U$ is smaller than $V$ in the sense of
		\begin{itemize}
			\item[(i)] usual stochastic order (abbreviated as $U\leq_{st}V$), if $\bar{F}_U(x)\leq\bar{G}_V(x)$ for all $x\in\mathbb{R}^+$;
			
			\item[(ii)] reversed hazard rate order (abbreviated as $U\leq_{rh}V$), if $G_V (x)/F_U(x)$ is increasing in $x$ for all $x\in\mathbb{R}^+$; or equivalently, if $\tilde{r}_U(x)\leq\tilde{r}_V(x)$ for all $x\in\mathbb{R}^+$;
			
			\item[(iii)] likelihood ratio order (abbreviated as $U\leq_{lr}V$), if $g_V (x)/f_U(x)$ is increasing in $x$ for all $x\in\mathbb{R}^+$;
			
			\item[(iv)] ageing faster order with respect to reversed hazard rate order (abbreviated as $U\leq_{R-rh}V$), if $\tilde{r}_U(x)/\tilde{r}_V(x)$ is decreasing in $x$ for all $x\in\mathbb{R}^+$.
		\end{itemize}
	\end{definition}
	It is important to note that $U\leq_{lr}V\Rightarrow U\leq_{rh}V\Rightarrow U\leq_{st}V$. \cite{shaked2007stochastic} provide a comprehensive overview of stochastic orders and their applications. The idea of majorization and related orders are then discussed, which are highly helpful in establishing the main results in the subsequent sections. Let $\mathbb{R}_n$ be an $n$-dimensional Euclidean space. Let $\boldsymbol{\varsigma}=(\varsigma_1,\ldots,\varsigma_n)$ and $\boldsymbol{\varepsilon}=(\varepsilon_1,\ldots,\varepsilon_n)$ be two real vectors. Further, let $\varsigma_{(1)}\leq\ldots\leq \varsigma_{(n)}$ and $\varepsilon_{(1)}\leq\ldots\leq \varepsilon_{(n)}$ denote the increasing arrangements of the components of the vectors $\boldsymbol{\varsigma}$ and $\boldsymbol{\varepsilon}$, respectively.
	\begin{definition}\label{definition2.2}(\citealp{marshall2011inequalities}) The vector $\boldsymbol{\varsigma}$ is said to be
		\begin{itemize}
			\item majorize the vector $\boldsymbol{\varepsilon}$ (abbreviated as $\boldsymbol{\varsigma}\stackrel{m}{\succcurlyeq}\boldsymbol{\varepsilon}$) if $\sum_{j=1}^{i}\varsigma_{(j)}\leq\sum_{j=1}^{i}\varepsilon_{(j)}$ for $i=1,\ldots,n-1$, and $\sum_{j=1}^{n}\varsigma_{(j)}=\sum_{j=1}^{n}\varepsilon_{(j)}$;
			
			\item weakly supermajorize the vector $\boldsymbol{\varepsilon}$ (abbreviated as $\boldsymbol{\varsigma}\stackrel{w}{\succcurlyeq}\boldsymbol{\varepsilon}$), if $\sum_{j=1}^{i}\varsigma_{(j)}\leq\sum_{j=1}^{i}\varepsilon_{(j)}$, for $i=1,\ldots,n$;
			
			\item weakly submajorize the vector $\boldsymbol{\varepsilon}$ (abbreviated as $\boldsymbol{\varsigma}\succcurlyeq_w\boldsymbol{\varepsilon}$), if $\sum_{j=i}^{n}\varsigma_{(j)}\geq\sum_{j=i}^{n}\varepsilon_{(j)}$, for $i=1,\ldots,n$.
		\end{itemize}
	\end{definition}
	
	It is obvious that the majorization order implies both weak supermajorization and weak submajorization orders. In the following, we will present a definition that demonstrates how the Schur-convex function is able to maintain the ordering of the majorization.
	
	\begin{definition}\label{definition2.3}(\citealp{marshall2011inequalities}) A real valued function $\Xi$ defined on a set $\mathbb{S}\subseteq\mathbb{R}_n$ is said to be Schur-convex (Schur-concave) on $\mathbb{S}$, if $\boldsymbol{\varsigma}\stackrel{m}{\succcurlyeq}\boldsymbol{\varepsilon}$ implies $\Xi(\boldsymbol{\varsigma})\geq(\leq)~\Xi(\boldsymbol{\varepsilon})$ for any $\boldsymbol{\varsigma}$, $\boldsymbol{\varepsilon}\in\mathbb{S}$.
	\end{definition}

	\begin{definition}\label{definition2.4}
		Let $P=\{p_{ij}\}$ and $Q=\{q_{ij}\}$ be two $m\times n$ matrices. Further, let $p_1^R,\ldots,p_m^R$ and $q_1^R,\ldots,q_m^R$ be two rows of $P$ and $Q$, respectively in such a way that each of these quantities is a row vector of length $n$. Then, $P$ is said to be chain majorized by $Q$ (abbreviated as $P\gg Q$) if there exists a finite number of $n\times n$ $T$-transform matrices $T_{\omega_1},\ldots,T_{\omega_k}$ such that $Q=PT_{\omega_1}\ldots T_{\omega_k}$.
	\end{definition}
	
	A $T$-transform matrix has the form $T=\varpi I+(1-\varpi)\Pi$, where $0\leq\varpi\leq1$ and $\Pi$ is a permutation matrix that just interchanges two coordinates that is, row and column.
	Define a matrix
	\begin{eqnarray*}
		\mathcal{L}_n=\left\lbrace
		(\boldsymbol{\varrho},\boldsymbol{\tau})=
		\begin{bmatrix}
			\varrho_{1} & \ldots & \varrho_{n}\\
			\tau_{1} & \ldots & \tau_{n}
		\end{bmatrix}:~\varrho_{i},\tau_{j}>0,~(\varrho_{i}-\varrho_{j})(\tau_{i}-\tau_{j})\leq0,~\forall~i,~j=1,\ldots,n
		\right\rbrace.
	\end{eqnarray*}
	
	\begin{lemma}\label{lemma2.1}
		A differentiable function $\Upsilon:\mathbb{R}_4^+\rightarrow\mathbb{R}^+$ satisfies $\Upsilon(P)\geq\Upsilon(Q)$ for all $P$, $Q$ such that $P\in\mathcal{L}_2$, and $P\gg Q$ if and only if
		\begin{itemize}
			\item[(i)] $\Upsilon(P)=\Upsilon(P\Pi)$ for all permutation matrices $\Pi$, and for all $P\in\mathcal{L}_2$ and;
			
			\item[(ii)] $\sum_{i=1}^{2}(p_{ik}-p_{ij})[\Upsilon_{ik}(P)-\Upsilon_{ij}(P)]\geq 0$,~$\forall$ $j,k=1,2$ and for all $P\in\mathcal{L}_2$, where $\Upsilon_{ij}(P)=\frac{\partial\Upsilon(P)}{\partial p_{ij}}$.
		\end{itemize}
	\end{lemma}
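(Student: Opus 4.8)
The statement is a matrix analogue of the classical Schur--Ostrowski criterion, and the plan is to prove both implications by reducing chain majorization to the action of a single $T$-transform and then analyzing the one-parameter family it generates. Fix $P\in\mathcal{L}_2$ and let $\Pi$ be the permutation interchanging the two columns. For $\varpi\in[0,1]$ set $T_\varpi=\varpi I+(1-\varpi)\Pi$ and define $\phi(\varpi)=\Upsilon(PT_\varpi)$. Writing $j,k$ for the two column indices, the entries of $R:=PT_\varpi$ are $R_{ij}=\varpi p_{ij}+(1-\varpi)p_{ik}$ and $R_{ik}=\varpi p_{ik}+(1-\varpi)p_{ij}$, while any remaining column is unchanged. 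The chain rule then gives $\phi'(\varpi)=\sum_{i=1}^2(p_{ij}-p_{ik})[\Upsilon_{ij}(R)-\Upsilon_{ik}(R)]$, the single computation that drives the whole argument.

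For necessity, I would first note that a permutation matrix is the $T$-transform with $\varpi=0$, so $P\gg P\Pi$; since the swap $\Pi$ is its own inverse and $P\Pi\in\mathcal{L}_2$ (the defining product is symmetric in the indices), one also has $P\Pi\gg P$, and the two inequalities $\Upsilon(P)\ge\Upsilon(P\Pi)$ and $\Upsilon(P\Pi)\ge\Upsilon(P)$ force (i). For (ii), because $P\gg PT_\varpi$ we have $\phi(\varpi)=\Upsilon(PT_\varpi)\le\Upsilon(P)=\phi(1)$ for all $\varpi\in[0,1]$, so $\phi$ attains its maximum on $[0,1]$ at the right endpoint and hence $\phi'(1)\ge 0$. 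Evaluating the derivative formula at $\varpi=1$ (where $R=P$) yields $\sum_i(p_{ij}-p_{ik})[\Upsilon_{ij}(P)-\Upsilon_{ik}(P)]\ge0$, which is exactly (ii) after the rearrangement $(p_{ik}-p_{ij})[\Upsilon_{ik}(P)-\Upsilon_{ij}(P)]=(p_{ij}-p_{ik})[\Upsilon_{ij}(P)-\Upsilon_{ik}(P)]$.

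For sufficiency I would assume (i) and (ii), prove $\Upsilon(P)\ge\Upsilon(PT_\varpi)$ for a single $T$-transform, and then iterate. The crucial structural fact, which I would verify first, is that $\mathcal{L}_2$ is closed under $T$-transforms: a direct computation gives $R_{ij}-R_{ik}=(2\varpi-1)(p_{ij}-p_{ik})$ in each row, whence $(R_{1j}-R_{1k})(R_{2j}-R_{2k})=(2\varpi-1)^2(p_{1j}-p_{1k})(p_{2j}-p_{2k})\le0$, i.e. $R\in\mathcal{L}_2$. Substituting $p_{ij}-p_{ik}=(R_{ij}-R_{ik})/(2\varpi-1)$ into the derivative formula gives $\phi'(\varpi)=(2\varpi-1)^{-1}\sum_i(R_{ij}-R_{ik})[\Upsilon_{ij}(R)-\Upsilon_{ik}(R)]$, and since $R\in\mathcal{L}_2$ the sum is nonnegative by (ii). Hence $\phi'(\varpi)\le0$ on $[0,\tfrac{1}{2})$ and $\phi'(\varpi)\ge0$ on $(\tfrac{1}{2},1]$, so $\phi$ is minimized at $\varpi=\tfrac{1}{2}$ and maximized at the endpoints; combined with $\phi(0)=\Upsilon(P\Pi)=\Upsilon(P)=\phi(1)$ from (i), this gives $\Upsilon(PT_\varpi)\le\Upsilon(P)$ throughout $[0,1]$.

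Finally, writing $P\gg Q$ as $Q=PT_{\omega_1}\cdots T_{\omega_k}$, I would set $P_0=P$ and $P_\ell=P_{\ell-1}T_{\omega_\ell}$, invoke closure of $\mathcal{L}_2$ to keep every $P_\ell\in\mathcal{L}_2$, and apply the single-transform bound at each step to telescope $\Upsilon(P)=\Upsilon(P_0)\ge\cdots\ge\Upsilon(P_k)=\Upsilon(Q)$. The main obstacle I anticipate is the sufficiency half: $\phi$ is not monotone, so the real content is recognizing that it dips to a minimum at $\varpi=\tfrac{1}{2}$ with equal endpoint values, which is what forces $\phi\le\Upsilon(P)$. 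The sign-tracking through the factor $(2\varpi-1)$ and the preservation of $\mathcal{L}_2$ (needed both to apply (ii) at the intermediate matrix $R$ and to iterate along the chain) are the technical points that must be handled with care.
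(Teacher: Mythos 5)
The paper does not prove this lemma at all: it is quoted as a known prerequisite (the two-column case of a standard chain-majorization criterion in the spirit of the Schur--Ostrowski condition, used repeatedly in the stochastic-orderings literature), and no argument for it appears in the text or the appendix. Your self-contained proof is correct and is essentially the standard one. The key computation $\phi'(\varpi)=\sum_{i}(p_{ij}-p_{ik})[\Upsilon_{ij}(R)-\Upsilon_{ik}(R)]$ is right; the necessity of (i) via $P\gg P\Pi$ and $P\Pi\gg P$ (noting $P\Pi\in\mathcal{L}_2$) and of (ii) via $\phi'(1)\geq 0$ at the endpoint maximum is sound; and the sufficiency argument correctly isolates the two genuine technical points, namely closure of $\mathcal{L}_2$ under $T$-transforms through the identity $R_{ij}-R_{ik}=(2\varpi-1)(p_{ij}-p_{ik})$, and the sign change of $\phi'$ at $\varpi=\tfrac12$ combined with $\phi(0)=\phi(1)$ from (i) to conclude $\phi\leq\phi(1)$ on all of $[0,1]$. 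The only point worth making explicit is that at $\varpi=\tfrac12$ the substitution divides by zero, but continuity of $\phi$ together with monotonicity on $[0,\tfrac12)$ and $(\tfrac12,1]$ closes that single point; and since the matrices are $2\times 2$, every $T$-transform involves the same swap $\Pi$, so the telescoping along the chain needs no further case analysis.
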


	\section{Model description and results}\label{section3}
	Consider $n$ number of homogeneous independent subpopulations of items, denoted by $\pi_1,\ldots,\pi_n$, which are infinite in nature. Let $X_i$ be the lifetime of an item of $\pi_i$, $i=1,\ldots,n$. Further, let $R(\boldsymbol{X},\boldsymbol{p})$ denote the RV, representing the mixture of items taken from $\pi_1,\ldots,\pi_n$, where $\boldsymbol{X}=(X_1,\ldots,X_n)$ and $\boldsymbol{p}=(p_1,\ldots,p_n)$. Here, $p_i$'s $(>0)$ are known as the mixing proportions with $\sum_{i=1}^{n}p_i=1$. Thus, the survival function of the mixture random variable (MRV) $R(\boldsymbol{X},\boldsymbol{p})$ is given by (see \citealp{McLachlan2000})
	\begin{eqnarray}\label{eq-3.5}
		\bar{F}_{R(\boldsymbol{X},\boldsymbol{p})}(x)=\sum_{i=1}^{n}p_i\bar{F}_{X_i}(x),
	\end{eqnarray}
	where $\bar{F}_{X_i}(x)$ is the survival function of the items of $\pi_i$. For more details on formulation of mixture distribution, readers are referred to \cite{chen2017finite}, \cite{mclachlan2019finite}, and \cite{McLachlan2000}. Because failure rate is a conditional characteristic, the equivalent mixture failure rate is defined using modified conditional weights (on the condition of survival function in $x\geq 0$). For detailed discussion on mixture failure rate, one may refer to the references \cite{navarro2004obtain}, \cite{finkelstein2008failure}, \cite{cha2013failure}, and so on. In this paper, we  consider that the lifetime of the unit of $i$-th subpopulation, denoted by $X_{\alpha_i,\beta_i}$ follows $\mathcal{IK}(\alpha_i,\beta_i)$ distribution, $i=1,\ldots,n$. Denote by $R_n(\boldsymbol{X}_{\boldsymbol{\alpha},\boldsymbol{\beta}};\boldsymbol{p})$ the MRV, constructing from $\pi_1,\ldots,\pi_n$ with $i$-th subpopulation $\pi_i$, following $\mathcal{IK}(\alpha_i,\beta_i)$ distribution, where $\boldsymbol{X}_{\boldsymbol{\alpha},\boldsymbol{\beta}}=(X_{\alpha_1,\beta_1},\ldots,X_{\alpha_n,\beta_n})$. Thus, from (\ref{eq-3.5}), we write
	\begin{eqnarray}\label{eq-3.6}
		\bar{F}_{R_n(\boldsymbol{X}_{\boldsymbol{\alpha},\boldsymbol{\beta}};\boldsymbol{p})}(x)=\sum_{i=1}^{n}p_i[1-(1-(1+x)^{-\alpha_i})^{\beta_i}],~x>0,
	\end{eqnarray}
	where $\boldsymbol{\alpha}=(\alpha_1,\ldots,\alpha_n)$ and $\boldsymbol{\beta}=(\beta_1,\ldots,\beta_n)$.
	
	In this section, we obtain ordering results between two FMMs under three different scenarios. In particular, we consider heterogeneity in one parameter, two parameters, and three parameters.
	
	\subsection{Ordering results for MMs when heterogeneity presents in one parameter}\label{subsection3.1}
	In this subsection, we obtain stochastic ordering results, by considering heterogeneity in one parameter. The first result studies usual stochastic ordering between two FMMs, when heterogeneity is present in mixing proportions. Furthermore, in this model, we consider $\boldsymbol{\alpha}$ as a common shape parameter vector, and $\beta$ as a fixed shape parameter. The following lemma is useful in proving the upcoming theorem.
	
	\begin{lemma}\label{lemma3.1}
		The function $\eta(x;\alpha,\beta)=1-(1-(1+x)^{-\alpha})^{\beta},~x>0,~\alpha>0,~\beta>0$,
		\begin{itemize}
			\item[(i)] is decreasing and convex with respect to $\alpha>0$, for fixed $x>0$, $\beta>0$;
			\item[(ii)] is increasing with respect to $\beta>0$, for fixed $x>0$, $\alpha>0$.
		\end{itemize}
	\end{lemma}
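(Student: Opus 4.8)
The plan is to treat each assertion as a question about the sign of a partial derivative of $\eta(x;\alpha,\beta)$, with the remaining variables held fixed. Throughout I would write $A=(1+x)^{-\alpha}$ and record two facts that carry essentially all of the sign information: since $x>0$ we have $c:=\ln(1+x)>0$, and since $x>0,\alpha>0$ we have $A\in(0,1)$, so that $B:=1-(1+x)^{-\alpha}\in(0,1)$ as well. With $\partial A/\partial\alpha=-cA$ in hand, the differentiations are then mechanical.

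For the monotonicity in part (i) I would differentiate once in $\alpha$ to get
\[
\frac{\partial}{\partial\alpha}\eta(x;\alpha,\beta)=-\beta\,c\,(1+x)^{-\alpha}\bigl(1-(1+x)^{-\alpha}\bigr)^{\beta-1},
\]
where every factor $\beta,c,A,B^{\beta-1}$ is strictly positive, so the derivative is negative and $\eta$ decreases in $\alpha$. Part (ii) is equally direct: differentiating in $\beta$ gives
\[
\frac{\partial}{\partial\beta}\eta(x;\alpha,\beta)=-\bigl(1-(1+x)^{-\alpha}\bigr)^{\beta}\ln\bigl(1-(1+x)^{-\alpha}\bigr),
\]
and because $B\in(0,1)$ we have $\ln B<0$, so this derivative is positive and $\eta$ increases in $\beta$. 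Both of these steps I expect to be routine.

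The convexity in part (i) is the crux and the place where I anticipate real difficulty. Differentiating the first derivative once more and collecting terms, I would reduce the second derivative to
\[
\frac{\partial^2}{\partial\alpha^2}\eta(x;\alpha,\beta)=\beta\,c^2\,(1+x)^{-\alpha}\bigl(1-(1+x)^{-\alpha}\bigr)^{\beta-2}\bigl(1-\beta(1+x)^{-\alpha}\bigr),
\]
so that, after stripping off the manifestly positive factors $\beta,c^2,A,B^{\beta-2}$, the sign is governed entirely by the term $1-\beta A$. Convexity thus amounts to verifying $1-\beta(1+x)^{-\alpha}\ge0$, i.e. $(1+x)^{-\alpha}\le 1/\beta$. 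This is immediate when $\beta\le 1$, since then $\beta A<\beta\le 1$; but it is exactly the delicate point, because for $\beta>1$ the factor $1-\beta A$ is negative for small $\alpha$ (where $A$ is close to $1$) and positive for large $\alpha$. I would therefore expect the argument to require either the standing parameter ranges of the companion theorem or a restriction to the region $(1+x)^{-\alpha}\le 1/\beta$ on which the bound holds, and I would flag this sign condition as the one genuinely nontrivial ingredient; the chain-rule expansion and the algebra that isolates the factor $1-\beta A$ are otherwise purely bookkeeping.
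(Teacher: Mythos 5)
Your computations are correct, and the paper gives you nothing to compare against: its ``proof'' of Lemma \ref{lemma3.1} is the single sentence that it is straightforward and omitted. Parts (i)-monotonicity and (ii) are indeed the routine sign checks you describe. The important point is that your analysis of the convexity claim is not merely a flagged worry --- it is a genuine refutation of the lemma as stated. Your formula
\[
\frac{\partial^2}{\partial\alpha^2}\eta(x;\alpha,\beta)=\beta\,c^2\,(1+x)^{-\alpha}\bigl(1-(1+x)^{-\alpha}\bigr)^{\beta-2}\bigl(1-\beta(1+x)^{-\alpha}\bigr),\qquad c=\ln(1+x),
\]
is right, and for any fixed $x>0$ and $\beta>1$ the factor $1-\beta(1+x)^{-\alpha}$ is strictly negative for all $\alpha<\ln\beta/\ln(1+x)$, so $\eta$ is concave in $\alpha$ near $0$ and convex only for large $\alpha$; hence $\eta$ is \emph{not} convex on all of $\alpha>0$ when $\beta>1$, contrary to the blanket hypothesis $\beta>0$ in the lemma. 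Your instinct about where the correct hypothesis lives is also borne out by the paper: the only place convexity is actually invoked is the proof of Theorem \ref{theorem3.3} (to pass from the decreasingness of $\partial\psi/\partial\alpha_i$ to the chain of inequalities in (A.9)), and that theorem explicitly assumes $0<\beta<1$, where $\beta(1+x)^{-\alpha}<1$ holds automatically and your argument closes. So the downstream results are unaffected, but the convexity assertion in part (i) of the lemma should be restricted to $0<\beta\le1$ (or to the region $(1+x)^{-\alpha}\le1/\beta$); as written for all $\beta>0$ it is false, and your proof correctly proves only the corrected statement.
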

	
	\begin{proof}
		The proof is straightforward, and hence it is omitted.
	\end{proof}
	
	\begin{assumption}\label{assumption3.1}
		Let $\pi_1,\ldots,\pi_n$ be $n$ homogeneous and independent (infinite) subpopulations, with lifetime (denoted by $X_i$) of the items in $i$-th subpopulation $\pi_i$ following $\mathcal{IK}(\alpha_i,\beta_i)$ distribution, $i=1,\ldots,n$.
	\end{assumption}
	
	\begin{theorem}\label{theorem3.1}
		Under the setup in Assumption \ref{assumption3.1}, with $\beta_1=\ldots=\beta_n=\beta$, for
		$(\boldsymbol{\alpha},\boldsymbol{p})$, $(\boldsymbol{\alpha},\boldsymbol{p^*})\in\mathcal{L}_n$, and $\beta>0$, we have
		\begin{eqnarray*}
			\boldsymbol{p^*}\preccurlyeq_w\boldsymbol{p}\Rightarrow
			R_n(\boldsymbol{X}_{\boldsymbol{\alpha},\beta};\boldsymbol{p^*})\leq_{st}
			R_n(\boldsymbol{X}_{\boldsymbol{\alpha},\beta};\boldsymbol{p}),
		\end{eqnarray*}
		where $\boldsymbol{X}_{\boldsymbol{\alpha},\beta}=(X_{\alpha_1,\beta},\ldots,X_{\alpha_n,\beta})$.
	\end{theorem}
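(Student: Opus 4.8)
The plan is to establish the usual stochastic order by comparing the two survival functions directly and pointwise. Fix $x>0$. By (\ref{eq-3.6}) it suffices to show that
\[
\Delta(x):=\bar{F}_{R_n(\boldsymbol{X}_{\boldsymbol{\alpha},\beta};\boldsymbol{p})}(x)-\bar{F}_{R_n(\boldsymbol{X}_{\boldsymbol{\alpha},\beta};\boldsymbol{p^*})}(x)=\sum_{i=1}^{n}(p_i-p_i^*)\,\eta(x;\alpha_i,\beta)\ge 0,
\]
where $\eta(x;\alpha,\beta)=1-(1-(1+x)^{-\alpha})^{\beta}$. I would abbreviate $c_i:=\eta(x;\alpha_i,\beta)$ and treat the $c_i$ as fixed nonnegative coefficients, since $x$, $\boldsymbol{\alpha}$ and $\beta$ are held fixed throughout.

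First I would exploit the membership of $(\boldsymbol{\alpha},\boldsymbol{p})$ and $(\boldsymbol{\alpha},\boldsymbol{p^*})$ in $\mathcal{L}_n$. Because both pairs share the same $\boldsymbol{\alpha}$ and satisfy $(\alpha_i-\alpha_j)(p_i-p_j)\le 0$ and $(\alpha_i-\alpha_j)(p_i^*-p_j^*)\le 0$, a single permutation of the indices makes $\alpha_1\le\alpha_2\le\cdots\le\alpha_n$, and under this common labeling both $\boldsymbol{p}$ and $\boldsymbol{p^*}$ become non-increasing. By Lemma \ref{lemma3.1}(i), $\eta(x;\cdot,\beta)$ is decreasing in $\alpha$, so the coefficients satisfy $c_1\ge c_2\ge\cdots\ge c_n\ge 0$ in the same labeling. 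Thus the constraint set $\mathcal{L}_n$ aligns the ordering of the coefficients with that of the mixing proportions, which is precisely what drives the argument.

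Next I would apply summation by parts (Abel's identity). Put $d_i=p_i-p_i^*$ and $S_m=\sum_{k=1}^{m}d_k$ with $S_0=0$. In the chosen labeling, the hypothesis $\boldsymbol{p^*}\preccurlyeq_w\boldsymbol{p}$ becomes the partial-sum inequalities $S_m=\sum_{k=1}^{m}(p_k-p_k^*)\ge 0$ for every $m=1,\ldots,n$, because the sum of the $m$ largest entries of a non-increasing vector is $\sum_{k=1}^{m}(\cdot)_k$; moreover $\sum_{i=1}^{n}p_i=\sum_{i=1}^{n}p_i^*=1$ forces $S_n=0$. Summation by parts then gives
\[
\Delta(x)=\sum_{i=1}^{n}c_i d_i=c_n S_n+\sum_{i=1}^{n-1}(c_i-c_{i+1})\,S_i=\sum_{i=1}^{n-1}(c_i-c_{i+1})\,S_i\ge 0,
\]
since each $c_i-c_{i+1}\ge 0$ and each $S_i\ge 0$. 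As $x>0$ was arbitrary, $\bar{F}_{R_n(\cdot;\boldsymbol{p^*})}\le\bar{F}_{R_n(\cdot;\boldsymbol{p})}$ everywhere, which is exactly $R_n(\boldsymbol{X}_{\boldsymbol{\alpha},\beta};\boldsymbol{p^*})\le_{st}R_n(\boldsymbol{X}_{\boldsymbol{\alpha},\beta};\boldsymbol{p})$ by Definition \ref{definition2.1}(i).

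The main obstacle I anticipate is the bookkeeping in the relabeling step: I must confirm that one permutation simultaneously orders $\boldsymbol{\alpha}$ increasingly and both $\boldsymbol{p},\boldsymbol{p^*}$ decreasingly, and that the weak-submajorization condition (stated through the sorted arrangement $p_{(j)}$) genuinely reduces to the clean inequalities $S_m\ge 0$ in that labeling. Ties among the $\alpha_i$ require a brief remark: on any block of equal $\alpha_i$ the coefficients $c_i$ coincide, so the telescoping weights $c_i-c_{i+1}$ vanish on that block and the conclusion survives even if $\boldsymbol{p}$ or $\boldsymbol{p^*}$ fails to be monotone within it. Everything else is routine.
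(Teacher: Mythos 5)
Your proof is correct, and it reaches the conclusion by the same basic route as the paper --- a pointwise comparison of the survival functions in which Lemma \ref{lemma3.1}(i) and the $\mathcal{L}_n$ condition are used to align the coefficients $c_i=\eta(x;\alpha_i,\beta)$ decreasingly with the decreasingly arranged mixing proportions --- but you finish the argument differently. The paper computes the partial derivatives $\partial\xi(\boldsymbol{p})/\partial p_i=1-(1-(1+x)^{-\alpha_i})^{\beta}$, checks that they are nonnegative and ordered, and then invokes Theorem 3.A.7 of Marshall--Olkin (weak submajorization is preserved by functions that are increasing and Schur-convex on the ordered cone); you instead exploit the linearity of $\bar{F}_{R_n(\boldsymbol{X}_{\boldsymbol{\alpha},\beta};\boldsymbol{p})}(x)$ in $\boldsymbol{p}$ and carry out the Abel summation $\sum_i c_i d_i=\sum_{i=1}^{n-1}(c_i-c_{i+1})S_i$ explicitly, which is precisely that theorem's proof specialized to a linear functional. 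Your version is more elementary and self-contained, and it handles a point the paper glosses over: when some $\alpha_i$ coincide, no single permutation need sort $\boldsymbol{\alpha}$ increasingly and both $\boldsymbol{p}$, $\boldsymbol{p^*}$ decreasingly, but, as you note, the telescoping weights $c_i-c_{i+1}$ vanish inside a tie block, so the inequalities $S_m\ge 0$ are needed only at block boundaries, where the partial sums really are sums of the $m$ largest entries and the weak submajorization hypothesis applies. What the paper's route buys in exchange is economy: the same derivative bookkeeping, applied to $-\xi$, immediately yields the supermajorization counterpart in Theorem \ref{theorem3.2} without a new computation.
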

	
	\begin{proof}
		See Appendix.
	\end{proof}
	
	The following corollary is an immediate consequence of Theorem \ref{theorem3.1} using the well-known result $\stackrel{m}{\preccurlyeq}\Rightarrow\preccurlyeq_w$.
	
	\begin{corollary}\label{corollary3.1}
		Based on the assumptions and conditions as in Theorem \ref{theorem3.1}, we have
		\begin{eqnarray*}
			\boldsymbol{p^*}\stackrel{m}{\preccurlyeq}\boldsymbol{p}\Rightarrow R_n(\boldsymbol{X}_{\boldsymbol{\alpha},\beta};\boldsymbol{p^*})\leq_{st}R_n(\boldsymbol{X}_{\boldsymbol{\alpha},\beta};\boldsymbol{p}).
		\end{eqnarray*}
	\end{corollary}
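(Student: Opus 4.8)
The plan is to prove the usual stochastic order directly from its survival-function characterisation in Definition~\ref{definition2.1}(i). Writing $\eta(x;\alpha,\beta)=1-(1-(1+x)^{-\alpha})^{\beta}$ as in Lemma~\ref{lemma3.1} and using the mixture representation~(\ref{eq-3.6}) with the common value $\beta_1=\cdots=\beta_n=\beta$, the claim $R_n(\boldsymbol{X}_{\boldsymbol{\alpha},\beta};\boldsymbol{p^*})\leq_{st}R_n(\boldsymbol{X}_{\boldsymbol{\alpha},\beta};\boldsymbol{p})$ is equivalent to the pointwise inequality
\begin{eqnarray*}
\Delta(x):=\bar{F}_{R_n(\boldsymbol{X}_{\boldsymbol{\alpha},\beta};\boldsymbol{p})}(x)-\bar{F}_{R_n(\boldsymbol{X}_{\boldsymbol{\alpha},\beta};\boldsymbol{p^*})}(x)=\sum_{i=1}^{n}(p_i-p^*_i)\,\eta(x;\alpha_i,\beta)\geq 0
\end{eqnarray*}
for every $x>0$. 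The strategy is to fix $x$ and reduce this weighted-sum inequality to a summation-by-parts estimate, harvesting the needed monotonicity from the two hypotheses.

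First I would fix $x>0$ and relabel the subpopulations so that $\alpha_1\leq\cdots\leq\alpha_n$. Because $(\boldsymbol{\alpha},\boldsymbol{p})\in\mathcal{L}_n$ and $(\boldsymbol{\alpha},\boldsymbol{p^*})\in\mathcal{L}_n$ enforce $(\alpha_i-\alpha_j)(p_i-p_j)\leq 0$ and $(\alpha_i-\alpha_j)(p^*_i-p^*_j)\leq 0$, in this labeling both proportion vectors are arranged decreasingly, i.e. $p_1\geq\cdots\geq p_n$ and $p^*_1\geq\cdots\geq p^*_n$. Simultaneously, Lemma~\ref{lemma3.1}(i) tells us $\alpha\mapsto\eta(x;\alpha,\beta)$ is decreasing, so the coefficients $c_i:=\eta(x;\alpha_i,\beta)$ obey $c_1\geq\cdots\geq c_n\geq 0$. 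The decisive role of the $\mathcal{L}_n$ hypothesis is exactly this: one single ordering by increasing $\alpha$ makes $\boldsymbol{p}$, $\boldsymbol{p^*}$ and the coefficient vector $\boldsymbol{c}$ comonotone (all decreasing).

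Next I would convert the majorization hypothesis into partial-sum inequalities. Since both $\boldsymbol{p}$ and $\boldsymbol{p^*}$ are now sorted decreasingly, the weak submajorization $\boldsymbol{p^*}\preccurlyeq_w\boldsymbol{p}$ of Definition~\ref{definition2.2} says precisely that $S_k:=\sum_{i=1}^{k}(p_i-p^*_i)\geq 0$ for every $k=1,\ldots,n$. Writing $d_i=p_i-p^*_i$ and applying Abel summation,
\begin{eqnarray*}
\Delta(x)=\sum_{i=1}^{n}d_i\,c_i=\sum_{i=1}^{n-1}S_i\,(c_i-c_{i+1})+S_n\,c_n,
\end{eqnarray*}
and each summand is nonnegative: $S_i\geq 0$ by weak submajorization, $c_i-c_{i+1}\geq 0$ and $c_n\geq 0$ by the monotonicity and nonnegativity of $\eta$. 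Hence $\Delta(x)\geq 0$, and as $x>0$ was arbitrary the usual stochastic order follows.

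The main obstacle is the bookkeeping of the first step rather than any analytic difficulty: I must be sure that the lone relabeling by increasing $\alpha$ really makes all three vectors decreasing, so that the tail-sum form of weak submajorization in Definition~\ref{definition2.2} coincides with the head partial sums $S_k$ used in the Abel argument. The only delicate point is ties among the $\alpha_i$, where $\mathcal{L}_n$ imposes no order on the corresponding proportions; these are harmless because $\boldsymbol{c}$ is constant across such a block (so the associated $c_i-c_{i+1}$ vanish), while at every genuine block boundary $S_k$ remains a difference of ``top-$k$'' sums and is therefore nonnegative. Once this alignment is justified, the remainder is the routine summation-by-parts computation above.
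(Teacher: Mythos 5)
Your proof is correct, but it takes a different route from the paper. The paper disposes of Corollary \ref{corollary3.1} in one line: since $\stackrel{m}{\preccurlyeq}$ implies $\preccurlyeq_w$, the hypothesis of Theorem \ref{theorem3.1} is met and the corollary follows; the real work sits inside Theorem \ref{theorem3.1}, whose proof computes $\partial\xi(\boldsymbol{p})/\partial p_i$, verifies $\partial\xi/\partial p_i\geq\partial\xi/\partial p_j\geq 0$ for $i\leq j$, and then invokes Theorem 3.A.7 of \cite{marshall2011inequalities} to conclude that $\xi$ preserves weak submajorization. You instead bypass the Marshall--Olkin--Arnold machinery entirely and prove the pointwise survival-function inequality directly by Abel summation: sorting by increasing $\alpha$ makes $\boldsymbol{p}$, $\boldsymbol{p^*}$ and the coefficients $c_i=\eta(x;\alpha_i,\beta)$ comonotone, the (sub)majorization hypothesis becomes nonnegativity of the head partial sums $S_k$, and summation by parts finishes the job. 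Your handling of ties in $\boldsymbol{\alpha}$ (constant $c$ within a block, so only block-boundary partial sums matter, and at those boundaries the first $k$ entries really are the $k$ largest) is the one delicate point and you address it correctly. What each approach buys: the paper's argument is shorter on the page because it outsources the combinatorics to a standard majorization theorem, and the derivative computation generalizes mechanically to the other results in Subsection \ref{subsection3.1}; yours is fully self-contained, elementary, and in fact establishes the stronger Theorem \ref{theorem3.1} itself (only weak submajorization of $\boldsymbol{p^*}$ by $\boldsymbol{p}$ is used, never the equality of totals), with the corollary as the special case.
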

	
	To validate Theorem \ref{theorem3.1}, we next provide an example.
	\begin{example}\label{example3.1}
		For $n=3$, consider $\boldsymbol{\alpha}=(0.5,0.4,0.3)$, $\boldsymbol{p}=(0.2,0.2,0.6)$, $\boldsymbol{p^*}=(0.3,0.3,0.4)$, and $\beta=1$. Clearly, the sufficient conditions in Theorem \ref{theorem3.1} are satisfied. Based on these numerical values, the sfs of $R_3(\boldsymbol{X}_{\boldsymbol{\alpha},\beta};\boldsymbol{p^*})$ and $R_3(\boldsymbol{X}_{\boldsymbol{\alpha},\beta};\boldsymbol{p})$ are plotted in Figure \ref{figure1(a)}, validating the result established in Theorem \ref{theorem3.1}.
	\end{example}
	
	Below, we consider a counterexample to reveal that the usual stochastic order between two MRVs may not hold if $(\boldsymbol{\alpha},\boldsymbol{p})$ and $(\boldsymbol{\alpha},\boldsymbol{p^*})$ do not belong to $\mathcal{L}_3$.
	
	\begin{counterexample}\label{counterexample3.1}
		With $n=3$, let $\boldsymbol{p}=(0.1,0.7,0.2)$, $\boldsymbol{p^*}=(0.2,0.5,0.3)$, $\boldsymbol{\alpha}=(3.5,4.8,5.6)$, and $\beta=20$. Here, $(0.2,0.5,0.3)\preccurlyeq_w(0.1,0.7,0.2)$, however, other two conditions are not satisfied. That is, $(\boldsymbol{\alpha},\boldsymbol{p}),(\boldsymbol{\alpha},\boldsymbol{p^*})\notin\mathcal{L}_3$. Now, we plot $\bar{F}_{R_3(\boldsymbol{X}_{\boldsymbol{\alpha},\beta};\boldsymbol{p^*})}(x)-\bar{F}_{R_3(\boldsymbol{X}_{\boldsymbol{\alpha},\beta};\boldsymbol{p})}(x)$ in Figure \ref{figure1(b)}. From the graph, it is clear that the desired usual stochastic order between $R_3(\boldsymbol{X}_{\boldsymbol{\alpha},\beta};\boldsymbol{p^*})$ and $R_3(\boldsymbol{X}_{\boldsymbol{\alpha},\beta};\boldsymbol{p})$ does not hold.
	\end{counterexample}
	
	\begin{figure}
		\begin{center}
			\subfigure[]{\label{figure1(a)}\includegraphics[width=3.2in]{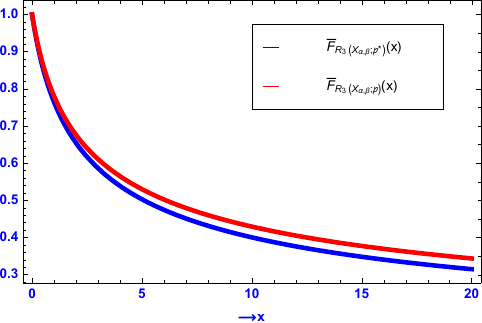}}
			\subfigure[]{\label{figure1(b)}\includegraphics[width=3.2in,height=2.18in]{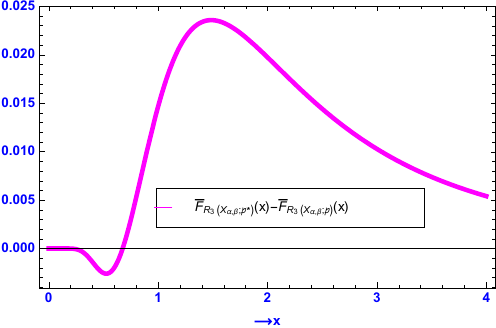}}
			\caption{(a) Plots of the sfs of $R_3(\boldsymbol{X}_{\boldsymbol{\alpha},\beta};\boldsymbol{p^*})$ (blue curve) and $R_3(\boldsymbol{X}_{\boldsymbol{\alpha},\beta};\boldsymbol{p})$ (red curve) in Example \ref{example3.1}.
				(b) Plot of the difference between the sfs of $R_3(\boldsymbol{X}_{\boldsymbol{\alpha},\beta};\boldsymbol{p^*})$ and $R_3(\boldsymbol{X}_{\boldsymbol{\alpha},\beta};\boldsymbol{p})$ in Counterexample \ref{counterexample3.1}.}
		\end{center}
	\end{figure}
	
	Denote $\boldsymbol{X}_{\alpha,\boldsymbol{\beta}}=(X_{\alpha,\beta_1},\ldots,X_{\alpha,\beta_n})$. In the next result, we obtain usual stochastic ordering between $R_n(\boldsymbol{X}_{\alpha,\boldsymbol{\beta}};\boldsymbol{p^*})$
	and $R_n(\boldsymbol{X}_{\alpha,\boldsymbol{\beta}};\boldsymbol{p})$, taking heterogeneity in mixing proportions. Additionally, we consider common shape parameter vector $\boldsymbol{\beta}$ and fixed $\alpha$.
	
	\begin{theorem}\label{theorem3.2}
		Under the setup in Assumption \ref{assumption3.1}, with $\alpha_1=\ldots=\alpha_n=\alpha$, for
		$(\boldsymbol{\beta},\boldsymbol{p}),
		(\boldsymbol{\beta},\boldsymbol{p^*})\in\mathcal{L}_n$, and $\alpha>0$, we have
		\begin{eqnarray*}
			\boldsymbol{p^*}\stackrel{w}{\preccurlyeq}\boldsymbol{p}\Rightarrow
			R_n(\boldsymbol{X}_{\alpha,\boldsymbol{\beta}};\boldsymbol{p^*})\geq_{st}
			R_n(\boldsymbol{X}_{\alpha,\boldsymbol{\beta}};\boldsymbol{p}).
		\end{eqnarray*}
	\end{theorem}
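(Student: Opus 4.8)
The plan is to reduce the claimed usual stochastic order to a single scalar inequality between the two survival functions, valid for each fixed $x$, and then to establish that inequality by a summation-by-parts (Abel) argument driven by Lemma~\ref{lemma3.1}(ii). Specializing \eqref{eq-3.6} to the common shape parameter $\alpha_1=\cdots=\alpha_n=\alpha$ and abbreviating $c_i=c_i(x)=\eta(x;\alpha,\beta_i)=1-(1-(1+x)^{-\alpha})^{\beta_i}$, the two survival functions read $\bar F_{R_n(\boldsymbol X_{\alpha,\boldsymbol\beta};\boldsymbol p^*)}(x)=\sum_{i=1}^{n}c_ip_i^*$ and $\bar F_{R_n(\boldsymbol X_{\alpha,\boldsymbol\beta};\boldsymbol p)}(x)=\sum_{i=1}^{n}c_ip_i$. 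By Definition~\ref{definition2.1}(i) it thus suffices to show $\sum_{i=1}^{n}c_ip_i^*\ge\sum_{i=1}^{n}c_ip_i$ for every fixed $x>0$.

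Since both survival functions and the majorization relations are invariant under a common permutation of the coordinates, I would first relabel the subpopulations so that $\beta_1\le\cdots\le\beta_n$; by Lemma~\ref{lemma3.1}(ii) the coefficients then satisfy $c_1\le c_2\le\cdots\le c_n$ for each fixed $x>0$. The hypotheses $(\boldsymbol\beta,\boldsymbol p),(\boldsymbol\beta,\boldsymbol p^*)\in\mathcal L_n$ enter precisely here: the defining inequalities $(\beta_i-\beta_j)(p_i-p_j)\le0$ and $(\beta_i-\beta_j)(p_i^*-p_j^*)\le0$ force the mixing proportions to be oppositely ordered to $\boldsymbol\beta$, so that under the same relabelling $p_1\ge\cdots\ge p_n$ and $p_1^*\ge\cdots\ge p_n^*$.

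Next I would convert the weak supermajorization hypothesis into ordinary partial-sum inequalities. Because $\boldsymbol p$ and $\boldsymbol p^*$ are mixing-proportion vectors with $\sum_ip_i=\sum_ip_i^*=1$, the relation $\boldsymbol p^*\stackrel{w}{\preccurlyeq}\boldsymbol p$ of Definition~\ref{definition2.2} carries equal total mass and hence coincides with majorization; combined with the decreasing arrangement just obtained, it yields $\sum_{k=1}^{i}p_k\ge\sum_{k=1}^{i}p_k^*$ for $i=1,\dots,n-1$ with equality at $i=n$. Writing $E_i=\sum_{k=1}^{i}(p_k-p_k^*)$, this says $E_i\ge0$ for all $i$ and $E_n=0$, and summation by parts gives
\begin{eqnarray*}
	\sum_{i=1}^{n}c_i\,(p_i^*-p_i)=\sum_{i=1}^{n-1}(c_{i+1}-c_i)\,E_i\ge0,
\end{eqnarray*}
since $c_{i+1}-c_i\ge0$ and $E_i\ge0$. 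As $x>0$ is arbitrary, this is exactly $\bar F_{R_n(\boldsymbol X_{\alpha,\boldsymbol\beta};\boldsymbol p^*)}(x)\ge\bar F_{R_n(\boldsymbol X_{\alpha,\boldsymbol\beta};\boldsymbol p)}(x)$, i.e.\ $R_n(\boldsymbol X_{\alpha,\boldsymbol\beta};\boldsymbol p^*)\ge_{st}R_n(\boldsymbol X_{\alpha,\boldsymbol\beta};\boldsymbol p)$.

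I expect the main obstacle to be the ordering bookkeeping in the third step: one must translate the weak supermajorization, which is defined through the increasing rearrangements $p_{(j)}$, into head partial-sum inequalities for the $\beta$-sorted indexing, and verify that ties among the $\beta_i$ cause no harm. The latter is handled by noting that within a block of equal $\beta_i$ the coefficient $c_i$ is constant, so the factors $c_{i+1}-c_i$ there vanish and only the partial sums $E_i$ at the block boundaries enter the Abel sum, where they are insensitive to the internal ordering of the proportions. Once these order relations are pinned down, the remaining computation is routine.
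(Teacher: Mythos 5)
Your proof is correct, and it reaches the conclusion by a genuinely different route from the paper. The paper defines $\phi(\boldsymbol p)=-\bar F_{R_n(\boldsymbol X_{\alpha,\boldsymbol\beta};\boldsymbol p)}(x)$, checks via Lemma \ref{lemma3.1}(ii) that $0\ge\partial\phi/\partial p_i\ge\partial\phi/\partial p_j$ for $i\le j$ on the $\mathcal L_n$-ordered domain (so that $\phi$ is decreasing and Schur-convex there), and then invokes Theorem 3.A.7 of \cite{marshall2011inequalities} to transfer the weak supermajorization $\boldsymbol{p^*}\stackrel{w}{\preccurlyeq}\boldsymbol p$ into $\phi(\boldsymbol{p^*})\le\phi(\boldsymbol p)$. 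You instead prove the pointwise inequality $\sum_i c_i p_i^*\ge\sum_i c_i p_i$ directly by summation by parts, after observing that the two mixing vectors have equal total mass so the hypothesis is really plain majorization, and that the $\mathcal L_n$ conditions align the head partial sums $E_i=\sum_{k\le i}(p_k-p_k^*)$ with the nonnegative differences $c_{i+1}-c_i$. The two arguments rest on the same two monotonicity facts (Lemma \ref{lemma3.1}(ii) and the antitone coupling of $\boldsymbol\beta$ with the weights), but yours is self-contained and elementary --- essentially an explicit unwinding of the Marshall--Olkin theorem in this linear special case --- and your remark about ties in $\boldsymbol\beta$ addresses a point the paper's ``$p_1\ge\cdots\ge p_n$'' step glosses over; the paper's version is shorter given the cited theorem and matches the template it reuses for Theorems \ref{theorem3.1} and \ref{theorem3.3}.
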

	
	\begin{proof}
		See Appendix.
	\end{proof}

	The following example illustrates Theorem \ref{theorem3.2}, for $n=3$.
	
	\begin{example}\label{example3.2}
		Assume that $\boldsymbol{p}=(0.1,0.3,0.6)$, $\boldsymbol{p^*}=(0.2,0.3,0.5)$, $\boldsymbol{\beta}=(0.5,0.4,0.3)$, and $\alpha=0.5$. Clearly, the assumptions made in Theorem \ref{theorem3.2} are satisfied. Using the numerical values, we plot the graphs of $R_3(\boldsymbol{X}_{\alpha,\boldsymbol{\beta}};\boldsymbol{p^*})$ and $R_3(\boldsymbol{X}_{\alpha,\boldsymbol{\beta}};\boldsymbol{p})$ in Figure \ref{figure2(a)}, validating the usual stochastic order in Theorem \ref{theorem3.2}.
	\end{example}

	Next, we consider a counterexample to establish that the result in Theorem \ref{theorem3.2} does not hold if $(\boldsymbol{\beta},\boldsymbol{p})$ and $(\boldsymbol{\beta},\boldsymbol{p^*})$ do not belong to $\mathcal{L}_3$.

	\begin{counterexample}\label{counterexample3.2}
		For $n=3$, set $\boldsymbol{p}=(0.2,0.6,0.2)$, $\boldsymbol{p^*}=(0.2,0.5,0.3)$, $\boldsymbol{\beta}=(5.2,15.8,5.6)$, and $\alpha=1$. Here, though $\boldsymbol{p^*}\stackrel{w}{\preccurlyeq}\boldsymbol{p}$ holds, the conditions $(\boldsymbol{\beta},\boldsymbol{p})\in\mathcal{L}_3$ and $(\boldsymbol{\beta},\boldsymbol{p^*})\in\mathcal{L}_3$ do not hold. Write
		\begin{eqnarray}\label{eq-3.7}
			K_1(x)&=&\bar{F}_{R_3(\boldsymbol{X}_{\alpha,\boldsymbol{\beta}};\boldsymbol{p^*})}(x)-\bar{F}_{R_3(\boldsymbol{X}_{\alpha,\boldsymbol{\beta}};\boldsymbol{p})}(x)\nonumber\\
			&=&\sum_{i=1}^{3}p_i^*[1-(1-(1+x)^{-1})^{\beta_i}]-\sum_{i=1}^{3}p_i[1-(1-(1+x)^{-1})^{\beta_i}].
		\end{eqnarray}
		Now, for $x=10$, $K_1(10)=0.00262105~(>0)$ and for $x=100$, $K_1(100)=-0.00408561~(<0)$, establishing that $K_1(x)$ changes its sign. Thus, clearly the usual stochastic order between $R_3(\boldsymbol{X}_{\alpha,\boldsymbol{\beta}};\boldsymbol{p^*})$ and $R_3(\boldsymbol{X}_{\alpha,\boldsymbol{\beta}};\boldsymbol{p})$ does not hold.
	\end{counterexample}
	
	\begin{figure}
		\begin{center}
			\subfigure[]{\label{figure2(a)}\includegraphics[width=3.2in]{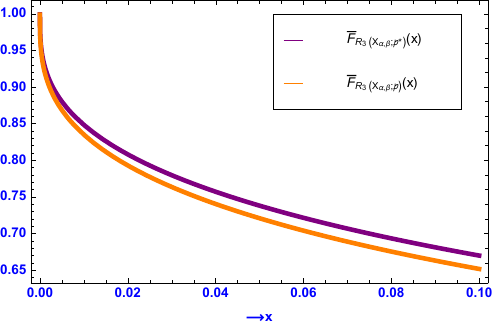}}
			\subfigure[]{\label{figure2(b)}\includegraphics[width=3.2in]{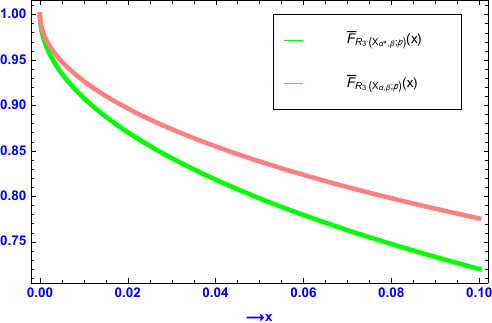}}
			\caption{(a) Plots of the sfs of $R_3(\boldsymbol{X}_{\alpha,\boldsymbol{\beta}};\boldsymbol{p^*})$ (purple curve) and $R_3(\boldsymbol{X}_{\alpha,\boldsymbol{\beta}};\boldsymbol{p})$ (orange curve) in Example \ref{example3.2}.
				(b) Plots of the sfs of $R_3(\boldsymbol{X}_{\boldsymbol{\alpha^*},\beta};\boldsymbol{p})$ (green curve) and $R_3(\boldsymbol{X}_{\boldsymbol{\alpha},\beta};\boldsymbol{p})$ (pink curve) in Example \ref{example3.3}. }
		\end{center}
	\end{figure}

	In the upcoming theorem, we consider heterogeneity in the shape parameter $\alpha$, while $\beta$ is fixed. In addition, we assume the mixing proportion vector $\boldsymbol{p}$ to be common. Denote $\boldsymbol{X}_{\boldsymbol{\alpha^*},\beta}=(X_{\alpha_1^*,\beta},\ldots,X_{\alpha_n^*,\beta})$.
	
	\begin{theorem}\label{theorem3.3}
		Under the setup as in Assumption \ref{assumption3.1}, with $\beta_1=\ldots=\beta_n=\beta~(0<\beta<1)$, for
		$(\boldsymbol{\alpha},\boldsymbol{p}), (\boldsymbol{\alpha^*},\boldsymbol{p})\in\mathcal{L}_n$, we
		have
		\begin{eqnarray*}
			\boldsymbol{\alpha^*}
			\stackrel{w}{\preccurlyeq}\boldsymbol{\alpha}\Rightarrow
			R_n(\boldsymbol{X}_{\boldsymbol{\alpha^*},\beta};\boldsymbol{p})\leq_{st}
			R_n(\boldsymbol{X}_{\boldsymbol{\alpha},\beta};\boldsymbol{p}).
		\end{eqnarray*}
	\end{theorem}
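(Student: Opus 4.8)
The plan is to reduce the usual stochastic order to a pointwise comparison of survival functions and then to read the resulting inequality as a weak-supermajorization statement for a weighted separable function. By Definition \ref{definition2.1}(i) it suffices to show, for every fixed $x>0$,
\[
\sum_{i=1}^{n}p_i\,\eta(x;\alpha_i^*,\beta)\le\sum_{i=1}^{n}p_i\,\eta(x;\alpha_i,\beta),
\]
where $\eta(x;\alpha,\beta)=1-(1-(1+x)^{-\alpha})^{\beta}$ is the function of Lemma \ref{lemma3.1}, since by (\ref{eq-3.6}) the survival function of $R_n(\boldsymbol{X}_{\boldsymbol{\alpha},\beta};\boldsymbol{p})$ is exactly $\sum_i p_i\eta(x;\alpha_i,\beta)$. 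Fixing $x$ and $\beta$, I would introduce the map $\Psi(\boldsymbol{\alpha})=\sum_{i=1}^{n}p_i\,\eta(x;\alpha_i,\beta)$ on the set $\{\boldsymbol{\alpha}:(\boldsymbol{\alpha},\boldsymbol{p})\in\mathcal{L}_n\}$ and aim to prove that $\boldsymbol{\alpha^*}\stackrel{w}{\preccurlyeq}\boldsymbol{\alpha}$ forces $\Psi(\boldsymbol{\alpha^*})\le\Psi(\boldsymbol{\alpha})$.

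The two analytic inputs concern the behaviour of $\eta$ in its first argument. By Lemma \ref{lemma3.1}(i), $\eta(x;\cdot,\beta)$ is decreasing in $\alpha$, so each partial derivative $\Psi_{(i)}(\boldsymbol{\alpha})=p_i\,\partial_\alpha\eta(x;\alpha_i,\beta)$ is nonpositive. In addition I would use that $\eta(x;\cdot,\beta)$ is convex in $\alpha$, so that $\partial_\alpha\eta(x;\cdot,\beta)$ is increasing; the sign of the second derivative is governed by the factor $1-\beta(1+x)^{-\alpha}$, which is positive for every $x>0$ and $\alpha>0$ precisely when $\beta\le1$, and this is exactly where the hypothesis $0<\beta<1$ is consumed.

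Next I would verify a Schur--Ostrowski-type condition for the weighted function. For indices with $\alpha_i>\alpha_j$, membership $(\boldsymbol{\alpha},\boldsymbol{p})\in\mathcal{L}_n$ forces $p_i\le p_j$, while convexity gives $\partial_\alpha\eta(x;\alpha_i,\beta)\ge\partial_\alpha\eta(x;\alpha_j,\beta)$ (both nonpositive); multiplying these opposite-order relations yields $p_i\,\partial_\alpha\eta(x;\alpha_i,\beta)\ge p_j\,\partial_\alpha\eta(x;\alpha_j,\beta)$, that is $(\alpha_i-\alpha_j)\big(\Psi_{(i)}(\boldsymbol{\alpha})-\Psi_{(j)}(\boldsymbol{\alpha})\big)\ge0$. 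Together with $\Psi_{(i)}\le0$ this is the pair of conditions under which a function increases as its argument is lowered in the weak-supermajorization order: decomposing $\boldsymbol{\alpha^*}\stackrel{w}{\preccurlyeq}\boldsymbol{\alpha}$ into a coordinatewise decrease (handled by the monotonicity $\Psi_{(i)}\le0$) followed by finitely many sum-preserving Robin--Hood transfers (handled by the Schur-type inequality) delivers $\Psi(\boldsymbol{\alpha^*})\le\Psi(\boldsymbol{\alpha})$, and since $x>0$ was arbitrary the usual stochastic order follows.

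The main obstacle is the lack of symmetry: because each weight $p_i$ is rigidly attached to a fixed coordinate, $\Psi$ is not a symmetric function, so classical Schur-convexity and the textbook weak-supermajorization preservation theorem cannot be quoted verbatim. The role of the conditions $(\boldsymbol{\alpha},\boldsymbol{p}),(\boldsymbol{\alpha^*},\boldsymbol{p})\in\mathcal{L}_n$ is precisely to repair this, by forcing the weights to stay oppositely ordered to the shape parameters. The genuinely delicate point is therefore to check that the intermediate vectors arising in the decomposition of weak supermajorization into a monotone step and majorizing transfers remain inside $\mathcal{L}_n$, so that the Schur-type inequality of the previous paragraph stays valid along the whole path; equivalently, one must invoke the weighted analogue of the preservation result rather than its symmetric form, and securing this compatibility is the crux of the argument.
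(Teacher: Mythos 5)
Your proposal is correct and follows essentially the same route as the paper: reduce $\leq_{st}$ to a pointwise comparison of the survival functions, use Lemma \ref{lemma3.1} (monotonicity and, via $0<\beta<1$, convexity of $\eta$ in $\alpha$) together with the opposite ordering of $\boldsymbol{p}$ and $\boldsymbol{\alpha}$ enforced by $\mathcal{L}_n$ to get $\partial\psi/\partial\alpha_i\leq\partial\psi/\partial\alpha_j\leq 0$ for the appropriately ordered indices, and then invoke the preservation theorem for weak supermajorization. The ``crux'' you flag --- that the weighted function is not symmetric and the decomposition must stay in the ordered cone --- is exactly what the paper discharges by citing Lemma 2.4 of Bhakta et al.\ (2023) and Theorem 3.A.8 of Marshall et al.\ (2011), which is stated for functions on the ordered domain, so no new idea is missing.
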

	
	\begin{proof}
		See Appendix.
	\end{proof}
	
	The following example illustrates Theorem \ref{theorem3.3}.
	
	\begin{example}\label{example3.3}
		Consider $n=3$, $\boldsymbol{\alpha}=(1.1,0.9,0.4)$, $\boldsymbol{\alpha^*}=(1.2,0.9,0.8)$, $\boldsymbol{p}=(0.1,0.2,0.7)$ and $\beta=0.5$. Clearly, the assumptions made in Theorem \ref{theorem3.3} are satisfied. Using the numerical values, we plot the graphs of $\bar{F}_{R_3(\boldsymbol{X}_{\boldsymbol{\alpha^*},\beta};\boldsymbol{p})}(x)$ and $\bar{F}_{R_3(\boldsymbol{X}_{\boldsymbol{\alpha},\beta};\boldsymbol{p})}(x)$ in Figure \ref{figure2(b)}, validating the usual stochastic order in Theorem \ref{theorem3.3}.
	\end{example}
	
	Next, we present a counterexample to show that the condition $\boldsymbol{\alpha^*}\stackrel{w}{\preccurlyeq}\boldsymbol{\alpha}$ in Theorem \ref{theorem3.3} cannot be dropped to get the usual stochastic ordering between the MRVs.
	
	\begin{counterexample}\label{counterexample3.3}
		Assume that $n=3$, $\boldsymbol{\alpha}=(1.2,0.8,0.7)$,  $\boldsymbol{\alpha^*}=(1.5,0.9,0.55)$, $\boldsymbol{p}=(0.20,0.35,0.45)$ and $\beta=0.5$. Here, $(1.5,0.9,0.55)\not\stackrel{w}{\preccurlyeq}(1.2,0.8,0.7)$, while other conditions are satisfied. We plot the difference $\bar{F}_{R_3(\boldsymbol{X}_{\boldsymbol{\alpha^*},\beta};\boldsymbol{p})}(x)-\bar{F}_{R_3(\boldsymbol{X}_{\boldsymbol{\alpha},\beta};\boldsymbol{p})}(x)$ in Figure \ref{figure3(a)}, which suggests that the usual stochastic order between $R_3(\boldsymbol{X}_{\boldsymbol{\alpha^*},\beta};\boldsymbol{p})$ and $R_3(\boldsymbol{X}_{\boldsymbol{\alpha},\beta};\boldsymbol{p})$ does not hold.
	\end{counterexample}
	%{\color{red}{\bf A practical application connected with Theorem \ref{theorem3.3}:}
	%Suppose there are two patients having same heart disease. Both patients will undergo cardiovascular surgery. For example, the three scenarios (for $n=3$) which are already stated in the practical example of the finite mixture models (for convenience, see page no. $2$). Consider both patients are connected in two different mixture observations. Further, assume that the lifetimes of the patients follow independent heterogeneous $\mathcal{IK}$ distributions. Then, the established result in Theorem \ref{theorem3.3} in the above is useful to find the more reliable patient.   
	%}

	\begin{figure}
		\begin{center}
			\subfigure[]{\label{figure3(a)}\includegraphics[width=3.2in,height=2.15in]{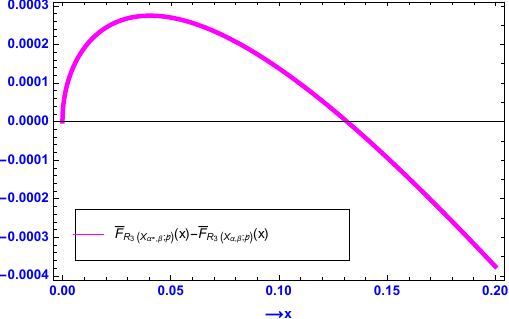}}
			\subfigure[]{\label{figure3(b)}\includegraphics[width=3.2in]{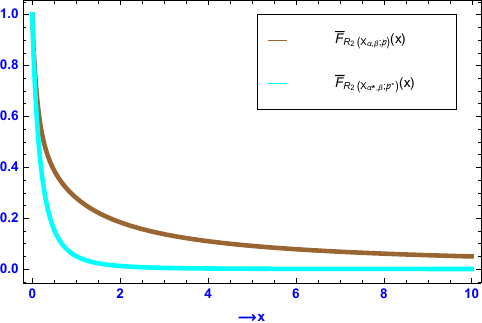}}
			\caption{(a) Plot of the difference between the sfs of $R_3(\boldsymbol{X}_{\boldsymbol{\alpha^*},\beta};\boldsymbol{p})$ and $R_3(\boldsymbol{X}_{\boldsymbol{\alpha},\beta};\boldsymbol{p})$ in Counterexample \ref{counterexample3.3}.
				(b) Plots of the sfs of $R_2(\boldsymbol{X}_{\boldsymbol{\alpha},\beta};\boldsymbol{p})$ (brown curve) and $R_2(\boldsymbol{X}_{\boldsymbol{\alpha^*},\beta};\boldsymbol{p^*})$ (cyan curve) in Example \ref{example3.4}.}
		\end{center}
	\end{figure}

	\subsection{Ordering results for MMs when heterogeneity presents in two parameters}\label{subsection3.2}
	
	In the previous subsection, we have assumed heterogeneity in one parameter. There are various situations, where more than one parameter is heterogeneous. In this subsection, we consider heterogeneity in two parameters and obtain some ordering results. The results have been established using the concept of chain majorization between the parameter-matrices of two MMs. The following lemma is useful in proving the upcoming theorem.
	
	\begin{lemma}\label{lemma3.2}
		The function $\zeta(x;\alpha,\beta)=(1+x)^{-\alpha}(1-(1+x)^{-\alpha})^{\beta-1}$, for fixed $x>0$ and $0<\beta<1$, is decreasing with respect to $\alpha>0$.
	\end{lemma}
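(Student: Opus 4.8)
The plan is to collapse the dependence on $\alpha$ into a single monotone substitution. First I would set $u=(1+x)^{-\alpha}$. Since $x>0$ is fixed, $1+x>1$, so $u\in(0,1)$ and, crucially, $u$ is a strictly \emph{decreasing} function of $\alpha>0$. In this notation $\zeta(x;\alpha,\beta)=g(u)$, where $g(u)=u(1-u)^{\beta-1}$. Because the inner map $\alpha\mapsto u$ is decreasing, it suffices to prove that $g$ is \emph{increasing} on $(0,1)$; the desired decreasing behaviour of $\zeta$ in $\alpha$ then follows automatically, being the composition of an increasing function with a decreasing one.

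Next I would differentiate $g$ and factor out the common power of $(1-u)$. A direct computation gives $g'(u)=(1-u)^{\beta-2}\,(1-\beta u)$. This is the step where the hypothesis $0<\beta<1$ enters decisively: for $u\in(0,1)$ one has $\beta u<1$, hence $1-\beta u>0$, while $(1-u)^{\beta-2}>0$ holds throughout $(0,1)$. Consequently $g'(u)>0$ on $(0,1)$, so $g$ is strictly increasing there.

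Finally I would assemble the two facts via the chain rule. Writing $t=1+x>1$ and using $\partial u/\partial\alpha=-u\ln t<0$ (as $u>0$ and $\ln t>0$), one obtains $\partial\zeta/\partial\alpha=g'(u)\,(-u\ln t)<0$, which is precisely the claimed monotonicity of $\zeta$ in $\alpha$.

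I do not expect any genuine obstacle here; the argument is a short calculus exercise once the substitution is made. The only point deserving care is the sign analysis of $g'(u)$, where the assumption $0<\beta<1$ is exactly what guarantees $1-\beta u>0$ for all $u\in(0,1)$. For $\beta\ge 1$ the factor $1-\beta u$ can change sign on $(0,1)$, and the monotonicity would break down, so this is the crux of the proof and the place where the stated range of $\beta$ cannot be relaxed.
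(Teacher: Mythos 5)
Your proof is correct; the paper omits the proof of Lemma \ref{lemma3.2} as ``straightforward,'' and your substitution $u=(1+x)^{-\alpha}$ (decreasing in $\alpha$ since $\ln(1+x)>0$) combined with the computation $g'(u)=(1-u)^{\beta-2}(1-\beta u)>0$ on $(0,1)$ is exactly the routine calculus argument the authors intend. (Only your closing aside is slightly imprecise: at $\beta=1$ the factor $1-\beta u=1-u$ does not change sign on $(0,1)$, and the conclusion still holds there since $\zeta$ reduces to $(1+x)^{-\alpha}$; the monotonicity can genuinely fail only for $\beta>1$.)
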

	
	\begin{proof}
		The proof is straightforward, and thus it is omitted.
	\end{proof}
	
	\begin{theorem}\label{theorem3.4}
		Let
		$\bar{F}_{R_2(\boldsymbol{X}_{\boldsymbol{\alpha},\beta};\boldsymbol{p})}(x)
		=\sum_{i=1}^{2}p_i[1-(1-(1+x)^{-\alpha_i})^\beta]$ and
		$\bar{F}_{R_2(\boldsymbol{X}_{\boldsymbol{\alpha^*},\beta};\boldsymbol{p^*})}(x)
		=\sum_{i=1}^{2}p_i^*[1-(1-(1+x)^{-\alpha_i^*})^\beta]$ be the sfs of the MRVs $R_2(\boldsymbol{X}_{\boldsymbol{\alpha},\beta};\boldsymbol{p})$ and $R_2(\boldsymbol{X}_{\boldsymbol{\alpha^*},\beta};\boldsymbol{p^*})$, respectively.    For $(\boldsymbol{p},\boldsymbol{\alpha})\in\mathcal{L}_2$
		and $\beta\in(0,1)$, we have
		\begin{eqnarray*}
			\begin{bmatrix}
				p_1 & p_2\\
				\alpha_1 & \alpha_2
			\end{bmatrix}\gg
			\begin{bmatrix}
				p_1^* & p_2^*\\
				\alpha_1^* & \alpha_2^*
			\end{bmatrix}\Rightarrow R_2(\boldsymbol{X}_{\boldsymbol{\alpha},\beta};\boldsymbol{p})\geq_{st} R_2(\boldsymbol{X}_{\boldsymbol{\alpha^*},\beta};\boldsymbol{p^*}).
		\end{eqnarray*}
	\end{theorem}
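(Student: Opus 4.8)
The plan is to apply Lemma \ref{lemma2.1} with a suitable objective function. Fix $x>0$ and define $\Upsilon$ on $\mathcal{L}_2$ by
\begin{eqnarray*}
	\Upsilon\left(\begin{bmatrix} p_1 & p_2 \\ \alpha_1 & \alpha_2\end{bmatrix}\right)=\sum_{i=1}^{2}p_i\,\eta(x;\alpha_i,\beta),
\end{eqnarray*}
where $\eta(x;\alpha,\beta)=1-(1-(1+x)^{-\alpha})^{\beta}$ is the per-component survival function of Lemma \ref{lemma3.1}, so that $\Upsilon(P)=\bar{F}_{R_2(\boldsymbol{X}_{\boldsymbol{\alpha},\beta};\boldsymbol{p})}(x)\in\mathbb{R}^+$. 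Writing $P=\{p_{ij}\}$ with $p_{11}=p_1,p_{12}=p_2,p_{21}=\alpha_1,p_{22}=\alpha_2$, I would verify hypotheses (i) and (ii) of Lemma \ref{lemma2.1}; once both hold, the lemma yields $\Upsilon(P)\geq\Upsilon(Q)$ for the chain-majorized matrix $Q$ formed from $(\boldsymbol{p^*},\boldsymbol{\alpha^*})$, i.e. $\bar{F}_{R_2(\boldsymbol{X}_{\boldsymbol{\alpha},\beta};\boldsymbol{p})}(x)\geq\bar{F}_{R_2(\boldsymbol{X}_{\boldsymbol{\alpha^*},\beta};\boldsymbol{p^*})}(x)$, and since $x>0$ is arbitrary this is precisely the asserted $\leq_{st}$ order.

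Hypothesis (i), permutation invariance $\Upsilon(P)=\Upsilon(P\Pi)$, is immediate since interchanging the two columns only relabels the pairs $(p_1,\alpha_1)\leftrightarrow(p_2,\alpha_2)$ in a symmetric sum. For (ii) the only nontrivial index choice is $j=1,k=2$, and computing the four partials gives $\Upsilon_{11}=\eta(x;\alpha_1,\beta)$, $\Upsilon_{12}=\eta(x;\alpha_2,\beta)$, $\Upsilon_{21}=p_1\,\eta_{\alpha}(x;\alpha_1,\beta)$, $\Upsilon_{22}=p_2\,\eta_{\alpha}(x;\alpha_2,\beta)$, where $\eta_{\alpha}=\partial\eta/\partial\alpha=-\beta\ln(1+x)\,\zeta(x;\alpha,\beta)$ and $\zeta$ is exactly the function isolated in Lemma \ref{lemma3.2}. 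Condition (ii) then reduces to showing
\begin{eqnarray*}
	(p_2-p_1)\bigl[\eta(x;\alpha_2,\beta)-\eta(x;\alpha_1,\beta)\bigr]+(\alpha_2-\alpha_1)\bigl[p_2\,\eta_{\alpha}(x;\alpha_2,\beta)-p_1\,\eta_{\alpha}(x;\alpha_1,\beta)\bigr]\geq0.
\end{eqnarray*}

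The crux is the nonnegativity of this sum, where I expect to use the $\mathcal{L}_2$ constraint $(p_1-p_2)(\alpha_1-\alpha_2)\leq0$ in tandem with the two monotonicity lemmas. By the permutation invariance just established I may assume $\alpha_1\leq\alpha_2$, whence $\mathcal{L}_2$ forces $p_1\geq p_2$. The first bracketed term is then a product of two nonpositive factors: $p_2-p_1\leq0$ and, by Lemma \ref{lemma3.1}(i) ($\eta$ decreasing in $\alpha$), $\eta(x;\alpha_2,\beta)-\eta(x;\alpha_1,\beta)\leq0$; hence it is $\geq0$. For the second term $\alpha_2-\alpha_1\geq0$, and writing $\eta_\alpha=-\beta\ln(1+x)\,\zeta$ with $\beta\ln(1+x)>0$, the inequalities $p_2\leq p_1$ and $\zeta(x;\alpha_2,\beta)\leq\zeta(x;\alpha_1,\beta)$ (Lemma \ref{lemma3.2}, valid since $0<\beta<1$) give $p_2\,\zeta(x;\alpha_2,\beta)\leq p_1\,\zeta(x;\alpha_1,\beta)$, and after multiplying by the negative constant $-\beta\ln(1+x)$ this becomes $p_2\,\eta_\alpha(x;\alpha_2,\beta)\geq p_1\,\eta_\alpha(x;\alpha_1,\beta)$; so the second term is $\geq0$ too.

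The main obstacle is exactly this sign analysis of (ii): the term carrying the $\alpha$-derivatives has no a priori fixed sign, and it is only the interplay between the oppositely-ordered structure encoded in $\mathcal{L}_2$ and the monotonicity of the auxiliary function $\zeta$ that forces nonnegativity — which is precisely why Lemma \ref{lemma3.2} was established beforehand and why the restriction $0<\beta<1$ enters. With (i) and (ii) in hand, the conclusion follows mechanically from Lemma \ref{lemma2.1} as outlined above.
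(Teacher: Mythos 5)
Your proposal is correct and follows essentially the same route as the paper: it verifies the two conditions of Lemma \ref{lemma2.1} for the survival function, splitting the key quantity into a $(p_1-p_2)$-term handled by Lemma \ref{lemma3.1}(i) and an $(\alpha_1-\alpha_2)$-term handled by Lemma \ref{lemma3.2}, with the $\mathcal{L}_2$ condition supplying the opposite orderings of $\boldsymbol{p}$ and $\boldsymbol{\alpha}$. Your sign computation of $\eta_\alpha=-\beta\ln(1+x)\,\zeta$ and the resulting case analysis match the paper's $\Delta_1(\boldsymbol{p},\boldsymbol{\alpha})\geq 0$ argument exactly.
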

	
	\begin{proof}
		See Appendix.
	\end{proof}
	
	To validate Theorem \ref{theorem3.4}, we consider the following example.
	
	\begin{example}\label{example3.4}
		For $n=2$, consider $\boldsymbol{p}=(0.6,0.4)$, $\boldsymbol{p^*}=(0.46,0.54)$, $\boldsymbol{\alpha}=(1,9)$, $\boldsymbol{\lambda}=(6.6,3.4)$, and $\beta=0.5$. Clearly, we observe that
		$(\begin{smallmatrix}
		0.6 & 0.4\\
		1 & 9
		\end{smallmatrix})\in\mathcal{L}_2$. Take a $T$-transform matrix
		$T_{0.3}=(\begin{smallmatrix}
		0.3 & 0.7\\
		0.7 & 0.3
		\end{smallmatrix})$. It can then be seen that
		\begin{eqnarray*}
			\begin{pmatrix}
				0.46 & 0.54\\
				6.6 & 3.4
			\end{pmatrix}=
			\begin{pmatrix}
				0.6 & 0.4\\
				1 & 9
			\end{pmatrix}\times
			\begin{pmatrix}
				0.3 & 0.7\\
				0.7 & 0.3
			\end{pmatrix}.
		\end{eqnarray*}
		Thus,
		\begin{eqnarray*}
			\begin{pmatrix}
				0.6 & 0.4\\
				1 & 9
			\end{pmatrix}\gg
			\begin{pmatrix}
				0.46 & 0.54\\
				6.6 & 3.4
			\end{pmatrix},
		\end{eqnarray*}
		and from Theorem \ref{theorem3.4}, we obtain $R_2(\boldsymbol{X}_{\boldsymbol{\alpha},\beta};\boldsymbol{p})\geq_{st}R_2(\boldsymbol{X}_{\boldsymbol{\alpha^*},\beta};\boldsymbol{p^*})$, as observed in Figure \ref{figure3(b)}.
	\end{example}
	
	Note that Theorem \ref{theorem3.4} has been established for $\beta\in(0,1)$ with other sufficient conditions. Thus, one may be curious to know whether the usual stochastic order in Theorem \ref{theorem3.4} holds if $\beta$ is not in $(0,1)$. In this regard, we consider the next counterexample.
	
	\begin{counterexample}\label{counterexample3.4}
		For $n=2$, set $\boldsymbol{\alpha}=(2,3)$, $\boldsymbol{p}=(0.7,0.3)$, $\boldsymbol{\alpha^*}=(2.6,2.4)$, $\boldsymbol{p^*}=(0.46,0.54)$, and $\beta=100$. Clearly, $\beta\notin(0,1)$. Further, for a $T$-transform matrix $T_{0.4}=(\begin{smallmatrix}
		0.4 & 0.6\\
		0.6 & 0.4
		\end{smallmatrix})$, we obtain
		\begin{eqnarray*}
			\begin{pmatrix}
				0.46 & 0.54\\
				2.6 & 2.4
			\end{pmatrix}=
			\begin{pmatrix}
				0.7 & 0.3\\
				2 & 3
			\end{pmatrix}\times
			\begin{pmatrix}
				0.4 & 0.6\\
				0.6 & 0.4
			\end{pmatrix},
		\end{eqnarray*}
		implying that
		\begin{eqnarray*}
			\begin{pmatrix}
				0.7 & 0.3\\
				2 & 3
			\end{pmatrix}\gg
			\begin{pmatrix}
				0.46 & 0.54\\
				2.6 & 2.4
			\end{pmatrix}.
		\end{eqnarray*}
		Now, we plot the graph of the difference of the sfs of $R_2(\boldsymbol{X}_{\boldsymbol{\alpha},\beta};\boldsymbol{p})$ and $R_2(\boldsymbol{X}_{\boldsymbol{\alpha^*},\beta};\boldsymbol{p^*})$ in Figure \ref{figure4(a)}. From this figure, the usual stochastic order between $R_2(\boldsymbol{X}_{\boldsymbol{\alpha},\beta};\boldsymbol{p})$ and $R_2(\boldsymbol{X}_{\boldsymbol{\alpha^*},\beta};\boldsymbol{p^*})$ does not hold, since the graph cross $x$-axis.
	\end{counterexample}
	
	Next, we extend the result in Theorem \ref{theorem3.4} for $n$ number of subpopulations.
	
	\begin{theorem}\label{theorem3.5}
		Let $\bar{F}_{R_n(\boldsymbol{X}_{\boldsymbol{\alpha},\beta};\boldsymbol{p})}(x)=\sum_{i=1}^{n}p_i[1-(1-(1+x)^{-\alpha_i})^{\beta}]$ and  $\bar{F}_{R_n(\boldsymbol{X}_{\boldsymbol{\alpha^*},\beta};\boldsymbol{p^*})}(x)=\sum_{i=1}^{n}p_i^*[1-(1-(1+x)^{-\alpha_i^*})^{\beta}]$ be the sfs of the MRVs $R_n(\boldsymbol{X}_{\boldsymbol{\alpha},\beta};\boldsymbol{p})$ and $R_n(\boldsymbol{X}_{\boldsymbol{\alpha^*},\beta};\boldsymbol{p^*})$, respectively. For
		$(\boldsymbol{p},\boldsymbol{\alpha})\in\mathcal{L}_n$ and $\beta\in(0,1)$,
		\begin{eqnarray*}
			\left(\begin{smallmatrix}
				p_1\ldots p_n\\
				\alpha_1\ldots \alpha_n
			\end{smallmatrix}\right)\gg
			\left(\begin{smallmatrix}
				p_1^*\ldots p_n^*\\
				\alpha_1^*\ldots \alpha_n^*
			\end{smallmatrix}\right)\Rightarrow R_n(\boldsymbol{X}_{\boldsymbol{\alpha},\beta};\boldsymbol{p})\geq_{st} R_n(\boldsymbol{X}_{\boldsymbol{\alpha^*},\beta};\boldsymbol{p^*}).
		\end{eqnarray*}
	\end{theorem}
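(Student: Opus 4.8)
The plan is to fix $x>0$ and treat the common survival function as a functional of the parameter matrix. Writing $\eta(x;\alpha,\beta)=1-(1-(1+x)^{-\alpha})^{\beta}$ as in Lemma \ref{lemma3.1}, set $\Psi(P)=\sum_{i=1}^{n}p_i\,\eta(x;\alpha_i,\beta)$ for a matrix $P=(\boldsymbol{p},\boldsymbol{\alpha})$ whose first row is $\boldsymbol{p}$ and second row is $\boldsymbol{\alpha}$. Then $\bar{F}_{R_n(\boldsymbol{X}_{\boldsymbol{\alpha},\beta};\boldsymbol{p})}(x)=\Psi(P)$ and $\bar{F}_{R_n(\boldsymbol{X}_{\boldsymbol{\alpha^*},\beta};\boldsymbol{p^*})}(x)=\Psi(Q)$ with $Q=(\boldsymbol{p^*},\boldsymbol{\alpha^*})$, so the asserted order $\geq_{st}$ is precisely the inequality $\Psi(P)\geq\Psi(Q)$ for every $x>0$. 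By Definition \ref{definition2.4}, $P\gg Q$ means $Q=PT_{\omega_1}\cdots T_{\omega_k}$ for finitely many $T$-transform matrices, so I would reduce the claim to showing that a single $T$-transform does not increase $\Psi$, and then chain the resulting inequalities along the product.

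For one $T$-transform $T_\omega=\varpi I+(1-\varpi)\Pi$, where $\Pi$ interchanges coordinates $k$ and $l$, only the $k$th and $l$th columns of $P$ are altered, becoming the convex combinations $(\varpi p_k+(1-\varpi)p_l,\ \varpi\alpha_k+(1-\varpi)\alpha_l)$ and $(\varpi p_l+(1-\varpi)p_k,\ \varpi\alpha_l+(1-\varpi)\alpha_k)$, while all other columns stay fixed. Hence every term with index $m\neq k,l$ cancels in $\Psi(P)-\Psi(PT_\omega)$, and the difference collapses to the two-component expression governed by columns $k,l$ alone. The submatrix $\left(\begin{smallmatrix} p_k & p_l\\ \alpha_k & \alpha_l\end{smallmatrix}\right)$ lies in $\mathcal{L}_2$, since $(\boldsymbol{p},\boldsymbol{\alpha})\in\mathcal{L}_n$ forces $(p_k-p_l)(\alpha_k-\alpha_l)\leq 0$, and it chain-majorizes its image under the corresponding $2\times 2$ $T$-transform. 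Thus Theorem \ref{theorem3.4}, applied to these two coordinates with the same $\beta\in(0,1)$, gives $p_k\eta(x;\alpha_k,\beta)+p_l\eta(x;\alpha_l,\beta)\geq$ the analogous sum for the transformed pair, i.e. $\Psi(P)\geq\Psi(PT_\omega)$.

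To conclude I would iterate this single-step inequality along $P,\ PT_{\omega_1},\ PT_{\omega_1}T_{\omega_2},\ldots,Q$. The step I expect to be the main obstacle is justifying that Theorem \ref{theorem3.4} stays applicable at every stage, because its use above needs the pair of columns acted on by the next $T$-transform to satisfy the negative-association condition of $\mathcal{L}_2$ in the current intermediate matrix. A helpful observation is that a $T$-transform automatically preserves the sign condition on the very pair it acts on: the two coordinate differences are each rescaled by $(2\varpi-1)$, so their product is multiplied by $(2\varpi-1)^2\geq 0$ and cannot change sign. The genuinely delicate point is controlling the sign condition for pairs formed by a freshly updated column and an untouched one, which must be verified (or arranged through a suitable ordering of the $T_{\omega_j}$) so that each intermediate matrix remains within reach of Theorem \ref{theorem3.4}. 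The monotonicity and convexity of $\eta$ in $\alpha$ furnished by Lemma \ref{lemma3.1}, already exploited in the base case, is what underpins the two-coordinate inequality at each step.
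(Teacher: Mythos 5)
Your single-step reduction is sound and is essentially the standard route (the paper omits its own proof, deferring to Theorem 4.2 of \cite{bhakta2023stochastic}, which follows the same two-column reduction): a $T$-transform $T_\omega=\varpi I+(1-\varpi)\Pi$ with $\Pi$ swapping coordinates $k,l$ leaves every other column of $P$ fixed, so $\Psi(P)-\Psi(PT_\omega)$ collapses to the two-column expression; the submatrix $\left(\begin{smallmatrix}p_k&p_l\\ \alpha_k&\alpha_l\end{smallmatrix}\right)$ inherits membership in $\mathcal{L}_2$ from $(\boldsymbol{p},\boldsymbol{\alpha})\in\mathcal{L}_n$; and the inequality $\Delta_1\geq 0$ established in the proof of Theorem \ref{theorem3.4} (which does not use $p_k+p_l=1$, so the normalization of the sub-pair is not an issue) closes that step.

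The genuine gap is the one you flag in your last paragraph and do not close: iterating along $P,\ PT_{\omega_1},\ PT_{\omega_1}T_{\omega_2},\ldots$ requires the pair of columns acted on by each successive $T$-transform to satisfy the sign condition of $\mathcal{L}_2$ in the \emph{current intermediate} matrix, and this is not automatic once the $T_{\omega_j}$ have different structures. Your observation that a $T$-transform rescales both coordinate differences of the pair it acts on by $(2\varpi-1)$ only controls that one pair; it says nothing about a pair consisting of a freshly mixed column and an untouched one, and such pairs can indeed leave $\mathcal{L}_n$. The paper itself treats this as a real obstruction: Corollary \ref{corollary3.2} covers products of $T$-transforms with a \emph{common} structure (such a product is again a single $T$-transform), and Theorem \ref{theorem3.6} covers different structures only under the added hypothesis $(\boldsymbol{p},\boldsymbol{\alpha})T_{\omega_1}\cdots T_{\omega_i}\in\mathcal{L}_n$ for $i=1,\ldots,k-1$. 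If your chaining went through unconditionally, Theorem \ref{theorem3.6} would be redundant. So your argument is complete only for $k=1$, i.e.\ a single $T$-transform --- which is evidently the intended scope of Theorem \ref{theorem3.5} --- and for general chain majorization you must either add the intermediate-membership hypothesis or restrict to common-structure transforms; the gap cannot be patched by the ``suitable ordering of the $T_{\omega_j}$'' you hope for, since the factorization of $Q=PT_{\omega_1}\cdots T_{\omega_k}$ is given, not chosen.
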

	
	\begin{proof}
		The proof of this theorem is similar to that of Theorem $4.2$ of \cite{bhakta2023stochastic}. Thus, it is omitted.
	\end{proof}
	
	It is well-known fact that a finite product of $T$-transform matrices with a common structure yields a $T$-transform matrix. Using this, the following corollary is immediate consequence of Theorem \ref{theorem3.5}.
	
	\begin{corollary}\label{corollary3.2}
		Consider $k$ number of $T$-transformed matrices with a common structure, denoted by $T_{\omega_1},\ldots,T_{\omega_n}$. Then, under the setup in Theorem \ref{theorem3.5}, with $(\boldsymbol{p},\boldsymbol{\alpha})\in\mathcal{L}_n$ and $\beta\in(0,1)$,
		\begin{eqnarray*}
			\left(\begin{smallmatrix}
				p_1\ldots p_n\\
				\alpha_1\ldots \alpha_n
			\end{smallmatrix}\right)\gg
			\left(\begin{smallmatrix}
				p_1^*\ldots p_n^*\\
				\alpha_1^*\ldots \alpha_n^*
			\end{smallmatrix}\right)\Rightarrow R_n(\boldsymbol{X}_{\boldsymbol{\alpha},\beta};\boldsymbol{p})\geq_{st} R_n(\boldsymbol{X}_{\boldsymbol{\alpha^*},\beta};\boldsymbol{p^*}).
		\end{eqnarray*}
	\end{corollary}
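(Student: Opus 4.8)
The plan is to exploit the algebraic closure of $T$-transform matrices that share a common permutation structure, thereby collapsing the chain of $k$ transforms into a single $T$-transform and then invoking Theorem~\ref{theorem3.5} directly. First I would recall that every $T$-transform matrix has the form $T_\omega=\omega I+(1-\omega)\Pi$ with $\omega\in[0,1]$ and $\Pi$ a permutation matrix interchanging two fixed coordinates. The phrase ``common structure'' for the matrices $T_{\omega_1},\ldots,T_{\omega_k}$ means precisely that they are built from one and the same $\Pi$, so that $T_{\omega_j}=\omega_j I+(1-\omega_j)\Pi$ for $j=1,\ldots,k$.

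The key step is to verify the closure property asserted in the text. Since $\Pi$ interchanges two coordinates we have $\Pi^2=I$, so the set $\{aI+b\Pi:a,b\in\mathbb{R}\}$ is a commutative algebra that is closed under multiplication. Multiplying two factors gives
\[
T_{\omega_1}T_{\omega_2}=\bigl[\omega_1\omega_2+(1-\omega_1)(1-\omega_2)\bigr]I+\bigl[\omega_1(1-\omega_2)+(1-\omega_1)\omega_2\bigr]\Pi,
\]
and the two bracketed coefficients are nonnegative, lie in $[0,1]$, and sum to $1$ whenever $\omega_1,\omega_2\in[0,1]$. Hence $T_{\omega_1}T_{\omega_2}=T_{\omega^{\dagger}}$ is again a $T$-transform matrix with the same $\Pi$. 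Iterating this over the $k$ factors by a short induction shows $T_{\omega_1}\cdots T_{\omega_k}=T_{\omega^*}$ for a single $\omega^*\in[0,1]$.

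Finally I would conclude by reduction. Writing $P=\left(\begin{smallmatrix}p_1\ldots p_n\\ \alpha_1\ldots \alpha_n\end{smallmatrix}\right)$ and $Q=\left(\begin{smallmatrix}p_1^*\ldots p_n^*\\ \alpha_1^*\ldots \alpha_n^*\end{smallmatrix}\right)$, the hypothesis $P\gg Q$ realized through matrices of common structure reads $Q=PT_{\omega_1}\cdots T_{\omega_k}=PT_{\omega^*}$; that is, $P\gg Q$ holds already through the single $T$-transform $T_{\omega^*}$. Since $(\boldsymbol{p},\boldsymbol{\alpha})\in\mathcal{L}_n$ and $\beta\in(0,1)$, the conditions of Theorem~\ref{theorem3.5} are met verbatim, and it yields $R_n(\boldsymbol{X}_{\boldsymbol{\alpha},\beta};\boldsymbol{p})\geq_{st}R_n(\boldsymbol{X}_{\boldsymbol{\alpha^*},\beta};\boldsymbol{p^*})$.

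The only genuine content here is the closure computation of the second paragraph; once that is in hand, everything else is bookkeeping, which is exactly why the statement is an immediate corollary rather than a separate theorem. I expect no real obstacle, the sole point requiring care being the verification that the convex-combination coefficients in the product stay in $[0,1]$ and sum to $1$, so that the product genuinely remains a \emph{single} $T$-transform and the intermediate-matrix membership issues that can arise for general chain majorization never appear.
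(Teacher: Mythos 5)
Your proposal is correct and follows exactly the paper's route: the paper justifies the corollary by the same "well-known fact" that a finite product of $T$-transform matrices with a common structure is again a single $T$-transform matrix, and then invokes Theorem \ref{theorem3.5}. You merely supply the explicit closure computation that the paper leaves implicit, which is a harmless (and welcome) addition.
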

	From the result in corollary \ref{corollary3.2}, it is an obvious question that ``does the result in corollary \ref{corollary3.2} hold if the $T$-transform matrices have different structures?'' In the next theorem we discuss this issue. We observe that the similar result to corollary \ref{corollary3.2} holds with an additional assumption.
	
	\begin{theorem}\label{theorem3.6}
		Consider $k$ number of $T$-transformed matrices with different structures, denoted by $T_{\omega_1},\ldots,T_{\omega_k}$. Then, under the similar setup as in Theorem \ref{theorem3.5}, with $(\boldsymbol{p},\boldsymbol{\alpha})\in\mathcal{L}_n$,
		$(\boldsymbol{p},\boldsymbol{\alpha})T_{\omega_1}\ldots T_{\omega_i}\in\mathcal{L}_n$, $i=1,\ldots,k-1,~(k\geq2)$, and $\beta\in(0,1)$,
		\begin{eqnarray*}
			\left(\begin{smallmatrix}
				p_1^*\ldots p_n^*\\
				\alpha_1^*\ldots\alpha_n^*
			\end{smallmatrix}\right)=
			\left(\begin{smallmatrix}
				p_1\ldots p_n\\
				\alpha_1\ldots \alpha_n
			\end{smallmatrix}\right)T_{\omega_1}\ldots T_{\omega_k}\Rightarrow R_n(\boldsymbol{X}_{\boldsymbol{\alpha},\beta};\boldsymbol{p})\geq_{st} R_n(\boldsymbol{X}_{\boldsymbol{\alpha^*},\beta};\boldsymbol{p^*}).
		\end{eqnarray*}
	\end{theorem}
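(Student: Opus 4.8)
The plan is to decompose the single-shot chain majorization into $k$ elementary $T$-transform steps, apply Theorem \ref{theorem3.5} to each step, and then glue the resulting orderings together by transitivity of $\leq_{st}$. First I would set up the intermediate parameter matrices
$$P_0=\begin{pmatrix}p_1 & \ldots & p_n\\ \alpha_1 & \ldots & \alpha_n\end{pmatrix},\qquad P_i=P_{i-1}\,T_{\omega_i}=P_0\,T_{\omega_1}\cdots T_{\omega_i},\quad i=1,\ldots,k,$$
so that $P_k=\left(\begin{smallmatrix}p_1^*\ldots p_n^*\\ \alpha_1^*\ldots\alpha_n^*\end{smallmatrix}\right)$. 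Denoting by $\boldsymbol{p}^{(i)}$ and $\boldsymbol{\alpha}^{(i)}$ the first and second rows of $P_i$, each $P_i$ is the parameter matrix of the mixture $R_n(\boldsymbol{X}_{\boldsymbol{\alpha}^{(i)},\beta};\boldsymbol{p}^{(i)})$, with the two endpoints being exactly $R_n(\boldsymbol{X}_{\boldsymbol{\alpha},\beta};\boldsymbol{p})$ and $R_n(\boldsymbol{X}_{\boldsymbol{\alpha^*},\beta};\boldsymbol{p^*})$.

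Next I would verify that Theorem \ref{theorem3.5} applies to each consecutive pair $(P_{i-1},P_i)$. Since $P_i=P_{i-1}T_{\omega_i}$ is obtained from $P_{i-1}$ through a single $T$-transform, Definition \ref{definition2.4} gives $P_{i-1}\gg P_i$ for every $i=1,\ldots,k$. Moreover, the hypotheses $(\boldsymbol{p},\boldsymbol{\alpha})\in\mathcal{L}_n$ and $P_0T_{\omega_1}\cdots T_{\omega_i}\in\mathcal{L}_n$ for $i=1,\ldots,k-1$ are precisely what is needed to guarantee that the left member $P_{i-1}$ of each pair lies in $\mathcal{L}_n$. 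With $\beta\in(0,1)$, Theorem \ref{theorem3.5} then yields
$$R_n(\boldsymbol{X}_{\boldsymbol{\alpha}^{(i-1)},\beta};\boldsymbol{p}^{(i-1)})\geq_{st}R_n(\boldsymbol{X}_{\boldsymbol{\alpha}^{(i)},\beta};\boldsymbol{p}^{(i)}),\qquad i=1,\ldots,k.$$
Chaining these $k$ inequalities by the transitivity of the usual stochastic order produces the desired conclusion $R_n(\boldsymbol{X}_{\boldsymbol{\alpha},\beta};\boldsymbol{p})\geq_{st}R_n(\boldsymbol{X}_{\boldsymbol{\alpha^*},\beta};\boldsymbol{p^*})$.

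The main obstacle—and the reason the extra membership hypotheses appear—is that, unlike the $n=2$ situation underlying Theorem \ref{theorem3.4}, the set $\mathcal{L}_n$ is \emph{not} closed under an arbitrary $T$-transform when $n\geq 3$: a transform mixing two coordinates can destroy the sign condition $(\varrho_i-\varrho_j)(\tau_i-\tau_j)\leq 0$ for a third coordinate. Consequently one cannot simply invoke Theorem \ref{theorem3.5} once on the pair $(P_0,P_k)$—as was possible in Corollary \ref{corollary3.2}, where the common structure collapses the product into a single $T$-transform—but must instead certify $\mathcal{L}_n$-membership at every intermediate stage so that each elementary step is licensed. Verifying this step-by-step applicability, rather than any new analytic estimate, is therefore the crux of the argument.
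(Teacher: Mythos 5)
Your proposal is correct and is exactly the intended argument: the paper omits the proof, deferring to Theorem 4.3 of \cite{bhakta2023stochastic}, whose proof proceeds by the same decomposition into intermediate matrices $P_i=P_{i-1}T_{\omega_i}$, an application of the single-step result (here Theorem \ref{theorem3.5}) at each stage using the hypothesis that every intermediate product remains in $\mathcal{L}_n$, and transitivity of $\leq_{st}$. Your closing remark about why $\mathcal{L}_n$-membership must be certified at each stage for $n\geq 3$ correctly identifies the role of the extra hypotheses.
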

	
	\begin{proof}
		The proof is similar to that of the proof of Theorem $4.3$ of \cite{bhakta2023stochastic}. Hence, it is not presented.
	\end{proof}
	
	\begin{figure}
		\begin{center}
			\subfigure[]{\label{figure4(a)}\includegraphics[width=3.2in,height=2.15in]{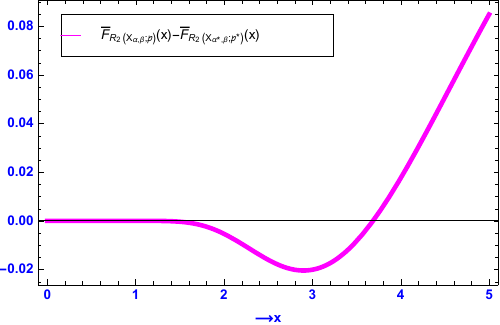}}
			\subfigure[]{\label{figure4(b)}\includegraphics[width=3.2in]{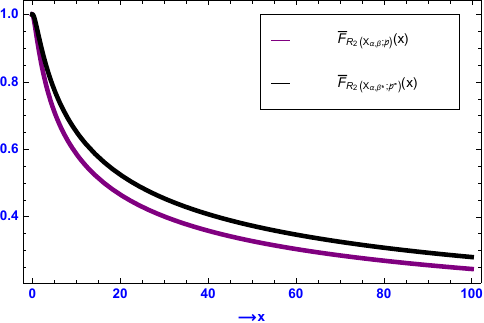}}
			\caption{(a) Plot of the difference between the sfs of $R_2(\boldsymbol{X}_{\boldsymbol{\alpha},\beta};\boldsymbol{p})$ and $R_2(\boldsymbol{X}_{\boldsymbol{\alpha^*},\beta};\boldsymbol{p^*})$ in Counterexample \ref{counterexample3.4}.
				(b) Plots of the sfs of $R_2(\boldsymbol{X}_{\alpha,\boldsymbol{\beta}};\boldsymbol{p})$ (purple curve) and $R_2(\boldsymbol{X}_{\alpha,\boldsymbol{\beta^*}};\boldsymbol{p^*})$ (black curve) in Example \ref{example3.5}.}
		\end{center}
	\end{figure}

	Preceding results of this subsection deal with the stochastic comparison of two MRVs when the matrix $(\boldsymbol{p},\boldsymbol{\alpha})$ changes to another matrix $(\boldsymbol{p^*},\boldsymbol{\alpha^*})$ with fixed $\beta$. In the upcoming results, we assume that the matrix $(\boldsymbol{p},\boldsymbol{\beta})$ changes to $(\boldsymbol{p^*},\boldsymbol{\beta^*})$ with fixed $\alpha$. First, we consider two subpopulations. The following lemma is useful to establish the next theorem.
	
	\begin{lemma}\label{lemma3.3}
		The function $\varkappa(x;p,\alpha,\beta)=p(1-(1+x)^{-\alpha})^{\beta}$
		\begin{itemize}
			\item[(i)] is decreasing with respect to $\beta$ for fixed $p>0$, $\alpha>0$;
			\item[(ii)] is increasing with respect to $p$ for fixed $\beta>0$, $\alpha>0$.
		\end{itemize}
	\end{lemma}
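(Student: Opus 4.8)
The plan is to reduce both claims to elementary sign checks on the partial derivatives of $\varkappa$, after isolating the single quantity whose sign controls everything. First I would observe that for fixed $x>0$ and $\alpha>0$ we have $0<(1+x)^{-\alpha}<1$, so that the base
\[
v:=1-(1+x)^{-\alpha}
\]
satisfies $v\in(0,1)$, and consequently $\ln v<0$ while $v^{\beta}>0$ for every $\beta>0$. With this notation $\varkappa(x;p,\alpha,\beta)=p\,v^{\beta}$, and the whole argument rests on the observation that $v$ lies strictly between $0$ and $1$.

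For part (i), I would compute the partial derivative with respect to $\beta$,
\[
\frac{\partial}{\partial\beta}\varkappa(x;p,\alpha,\beta)=p\,v^{\beta}\ln v.
\]
Since $p>0$ and $v^{\beta}>0$, the sign of this derivative coincides with the sign of $\ln v$, which is negative because $v\in(0,1)$. Hence $\varkappa$ is decreasing in $\beta$ for fixed $p>0$ and $\alpha>0$.

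For part (ii), I would differentiate with respect to $p$,
\[
\frac{\partial}{\partial p}\varkappa(x;p,\alpha,\beta)=v^{\beta},
\]
which is strictly positive by the same observation, giving monotone increase in $p$ for fixed $\beta>0$ and $\alpha>0$.

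Honestly, there is no substantive obstacle here: the only point requiring any care is confirming that $v\in(0,1)$ so that $\ln v<0$, after which each claim follows from a one-line derivative. This is precisely why the statement can be regarded as straightforward and its verification omitted.
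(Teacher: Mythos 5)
Your proof is correct: the paper itself omits the argument as straightforward, and your derivative computations (with the key observation that $1-(1+x)^{-\alpha}\in(0,1)$ so its logarithm is negative) are exactly the intended elementary verification. Nothing further is needed.
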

	
	\begin{proof}
		The proof is straightforward, and thus it is omitted.
	\end{proof}
	
	\begin{theorem}\label{theorem3.7}
		Let
		$\bar{F}_{R_2(\boldsymbol{X}_{\alpha,\boldsymbol{\beta}};\boldsymbol{p})}(x)=\sum_{i=1}^{2}p_i[1-(1-(1+x)^{-\alpha})^{\beta_i}]$
		and
		$\bar{F}_{R_2(\boldsymbol{X}_{\alpha,\boldsymbol{\beta^*}};\boldsymbol{p^*})}(x)=\sum_{i=1}^{2}p_i^*[1-(1-(1+x)^{-\alpha})^{\beta_i^*}]$
		be the sfs of the MRVs $R_n(\boldsymbol{X}_{\alpha,\boldsymbol{\beta}};\boldsymbol{p})$ and $R_n(\boldsymbol{X}_{\alpha,\boldsymbol{\beta^*}};\boldsymbol{p^*})$, respectively.  For
		$(\boldsymbol{p},\boldsymbol{\beta})\in\mathcal{L}_2$ and fixed $\alpha>0$,
		\begin{eqnarray*}
			\begin{bmatrix}
				p_1 & p_2\\
				\beta_1 & \beta_2
			\end{bmatrix}\gg
			\begin{bmatrix}
				p_1^* & p_2^*\\
				\beta_1^* & \beta_2^*
			\end{bmatrix}\Rightarrow R_2(\boldsymbol{X}_{\alpha,\boldsymbol{\beta}};\boldsymbol{p})\leq_{st} R_2(\boldsymbol{X}_{\alpha,\boldsymbol{\beta^*}};\boldsymbol{p^*}).
		\end{eqnarray*}
	\end{theorem}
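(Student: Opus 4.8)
The plan is to collapse the claimed usual stochastic order into one scalar inequality and then verify that inequality by invoking Lemma \ref{lemma2.1}. Because $\boldsymbol{p}$ and $\boldsymbol{p^*}$ are mixing-proportion vectors and chain majorization is carried out through doubly stochastic $T$-transform matrices, the row sums are preserved, so $\sum_{i=1}^{2}p_i=\sum_{i=1}^{2}p_i^*=1$. Setting $u=u(x)=1-(1+x)^{-\alpha}\in(0,1)$ for $x>0$ and $\alpha>0$, the two survival functions reduce to $1-\sum_{i=1}^{2}p_iu^{\beta_i}$ and $1-\sum_{i=1}^{2}p_i^*u^{\beta_i^*}$. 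Thus, by Definition \ref{definition2.1}(i), the conclusion $R_2(\boldsymbol{X}_{\alpha,\boldsymbol{\beta}};\boldsymbol{p})\leq_{st}R_2(\boldsymbol{X}_{\alpha,\boldsymbol{\beta^*}};\boldsymbol{p^*})$ is equivalent to establishing, for each fixed $x>0$,
\[
\sum_{i=1}^{2}p_iu^{\beta_i}\geq\sum_{i=1}^{2}p_i^*u^{\beta_i^*}.
\]

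For fixed $x>0$ and $\alpha>0$, I would define $\Upsilon(P)=p_1u^{\beta_1}+p_2u^{\beta_2}$, where $P=\left(\begin{smallmatrix}p_1 & p_2\\ \beta_1 & \beta_2\end{smallmatrix}\right)$, and apply Lemma \ref{lemma2.1} with this $P$ and $Q=\left(\begin{smallmatrix}p_1^* & p_2^*\\ \beta_1^* & \beta_2^*\end{smallmatrix}\right)$, so that $P\in\mathcal{L}_2$ and $P\gg Q$ by hypothesis. Condition (i) of Lemma \ref{lemma2.1} holds at once, since $\Upsilon$ is a symmetric sum over the two columns and is therefore invariant under the only nontrivial $2\times2$ column permutation.

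The heart of the argument is condition (ii). Identifying the matrix entries as $p_{11}=p_1$, $p_{12}=p_2$, $p_{21}=\beta_1$, $p_{22}=\beta_2$, the relevant partial derivatives are $\Upsilon_{11}=u^{\beta_1}$, $\Upsilon_{12}=u^{\beta_2}$, $\Upsilon_{21}=p_1u^{\beta_1}\ln u$, and $\Upsilon_{22}=p_2u^{\beta_2}\ln u$, so condition (ii) for $\{j,k\}=\{1,2\}$ reduces to
\[
(p_2-p_1)(u^{\beta_2}-u^{\beta_1})+(\beta_2-\beta_1)(\ln u)(p_2u^{\beta_2}-p_1u^{\beta_1})\geq0.
\]
To sign this I would use $u\in(0,1)$, whence $\ln u<0$ and, by Lemma \ref{lemma3.3}, $u^{\beta}$ is decreasing in $\beta$ while $pu^{\beta}$ is increasing in $p$, together with the defining constraint of $\mathcal{L}_2$, namely $(p_1-p_2)(\beta_1-\beta_2)\leq0$. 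Splitting into the cases $p_1\geq p_2$ (which forces $\beta_1\leq\beta_2$) and $p_1\leq p_2$ (which forces $\beta_1\geq\beta_2$), each of the two summands becomes a product of two factors sharing a common sign and is hence nonnegative. The main obstacle is precisely this sign bookkeeping: the second summand carries the extra factor $\ln u$, and one must check that the $\mathcal{L}_2$ alignment makes $p_2-p_1$ and $u^{\beta_2}-u^{\beta_1}$ share a common sign, and likewise $(\beta_2-\beta_1)\ln u$ and $p_2u^{\beta_2}-p_1u^{\beta_1}$ share a common sign, in each case.

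Once (i) and (ii) are confirmed, Lemma \ref{lemma2.1} gives $\Upsilon(P)\geq\Upsilon(Q)$, which is exactly the scalar inequality displayed above for that value of $x$. As $x>0$ was arbitrary, the survival function comparison holds for all $x$, and the usual stochastic order follows.
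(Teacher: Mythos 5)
Your proposal is correct and follows essentially the same route as the paper: both verify the two conditions of Lemma \ref{lemma2.1} and sign the same bilinear expression via the case split forced by $(\boldsymbol{p},\boldsymbol{\beta})\in\mathcal{L}_2$, using the monotonicity facts of Lemmas \ref{lemma3.1} and \ref{lemma3.3}. The only cosmetic difference is that you apply the lemma to the distribution function $\sum_i p_i u^{\beta_i}$ (obtaining the inequality $\geq 0$), while the paper works with the survival function and shows the corresponding quantity $\Delta_2$ is $\leq 0$; the two are negatives of each other.
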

	
	\begin{proof}
		See Appendix.
	\end{proof}
	
	In order to justify Theorem \ref{theorem3.7}, an example is provided.
	
	\begin{example}\label{example3.5}
		Let $\boldsymbol{p}=(0.2,0.8)$, $\boldsymbol{p^*}=(0.74,0.26)$, $\boldsymbol{\beta}=(6,2)$, $\boldsymbol{\beta^*}=(2.4,5.6)$, and $\alpha=0.6$. It is not hard to check that $(\boldsymbol{p},\boldsymbol{\beta})\in\mathcal{L}_2$. Further, let
		$T_{0.1}=\left(\begin{smallmatrix}
		0.1 & 0.9\\
		0.9 & 0.1
		\end{smallmatrix}\right)$. It can then be seen that
		\begin{eqnarray*}
			\begin{pmatrix}
				0.74 & 0.26\\
				2.4 & 5.6
			\end{pmatrix}=
			\begin{pmatrix}
				0.2 & 0.8\\
				6 & 2
			\end{pmatrix}\times
			\begin{pmatrix}
				0.1 & 0.9\\
				0.9 & 0.1
			\end{pmatrix},
		\end{eqnarray*}
		which implies that
		\begin{eqnarray*}
			\begin{pmatrix}
				0.2 & 0.8\\
				6 & 2
			\end{pmatrix}\gg
			\begin{pmatrix}
				0.74 & 0.26\\
				2.4 & 5.6
			\end{pmatrix}.
		\end{eqnarray*}
		Thus, from Theorem \ref{theorem3.7}, we obtain $R_2(\boldsymbol{X}_{\alpha,\boldsymbol{\beta}};\boldsymbol{p})\leq_{st} R_2(\boldsymbol{X}_{\alpha,\boldsymbol{\beta^*}};\boldsymbol{p^*})$, which can be verified from Figure \ref{figure4(b)}.
	\end{example}
	
	The following counterexample shows that the desired ordering result in Theorem \ref{theorem3.7} does not hold if $(\boldsymbol{p},\boldsymbol{\beta})\notin\mathcal{L}_2$.
	
	\begin{counterexample}\label{counterexample3.5}
		Let us assume
		\begin{eqnarray*}
			\begin{pmatrix}
				p_1 & p_2\\
				\beta_1 & \beta_2
			\end{pmatrix}=
			\begin{pmatrix}
				0.6 & 0.4\\
				7 & 1
			\end{pmatrix}\notin\mathcal{L}_2~and~
			\begin{pmatrix}
				p_1^* & p_2^*\\
				\beta_1^* & \beta_2^*
			\end{pmatrix}=
			\begin{pmatrix}
				0.48 & 0.52\\
				3.4 & 4.6
			\end{pmatrix}\notin\mathcal{L}_2.
		\end{eqnarray*}
		It is then easy to check that
		\begin{eqnarray*}
			\begin{pmatrix}
				0.48 & 0.52\\
				3.4 & 4.6
			\end{pmatrix}=
			\begin{pmatrix}
				0.6 & 0.4\\
				7 & 1
			\end{pmatrix}\times
			\begin{pmatrix}
				0.4 & 0.6\\
				0.6 & 0.4
			\end{pmatrix},
		\end{eqnarray*}
		where $T_{0.4}=\left(\begin{smallmatrix}
		0.4 & 0.6\\
		0.6 & 0.4
		\end{smallmatrix}\right)$, implying that
		\begin{eqnarray*}
			\begin{pmatrix}
				0.6 & 0.4\\
				7 & 1
			\end{pmatrix}\gg
			\begin{pmatrix}
				0.48 & 0.52\\
				3.4 & 4.6
			\end{pmatrix}.
		\end{eqnarray*}
		Now, the difference between the sfs of the MRVs $R_2(\boldsymbol{X}_{\alpha,\boldsymbol{\beta}};\boldsymbol{p})$ and $R_2(\boldsymbol{X}_{\alpha,\boldsymbol{\beta^*}};\boldsymbol{p^*})$ is plotted in Figure \ref{figure5(a)}. Clearly, the difference take negative as well as positive values, which means that the desired usual stochastic order in Theorem \ref{theorem3.7} does not hold.
	\end{counterexample}
	
	Next, we present a result, dealing with $n$ number of subpopulations.
	
	\begin{theorem}\label{theorem3.8}
		Let $\bar{F}_{R_n(\boldsymbol{X}_{\alpha,\boldsymbol{\beta}};\boldsymbol{p})}(x)=\sum_{i=1}^{n}p_i[1-(1-(1+x)^{-\alpha})^{\beta_i}]$ and  $\bar{F}_{R_n(\boldsymbol{X}_{\alpha,\boldsymbol{\beta^*}};\boldsymbol{p^*})}(x)=\sum_{i=1}^{n}p_i^*[1-(1-(1+x)^{-\alpha})^{\beta_i^*}]$ be the sfs of the MRVs $R_n(\boldsymbol{X}_{\alpha,\boldsymbol{\beta}};\boldsymbol{p})$ and $R_n(\boldsymbol{X}_{\alpha,\boldsymbol{\beta^*}};\boldsymbol{p^*})$, respectively. For
		$(\boldsymbol{p},\boldsymbol{\beta})\in\mathcal{L}_n$ and fixed $\alpha>0$,
		\begin{eqnarray*}
			\left(\begin{smallmatrix}
				p_1\ldots p_n\\
				\beta_1\ldots \beta_n
			\end{smallmatrix}\right)\gg
			\left(\begin{smallmatrix}
				p_1^*\ldots p_n^*\\
				\beta_1^*\ldots \beta_n^*
			\end{smallmatrix}\right)\Rightarrow R_n(\boldsymbol{X}_{\alpha,\boldsymbol{\beta}};\boldsymbol{p})\leq_{st} R_n(\boldsymbol{X}_{\alpha,\boldsymbol{\beta^*}};\boldsymbol{p^*}).
		\end{eqnarray*}
	\end{theorem}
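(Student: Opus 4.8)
The plan is to reduce the required usual stochastic order to a single scalar inequality and then exploit the chain-majorization hypothesis exactly as in the proof of Theorem \ref{theorem3.5} (equivalently, Theorem $4.2$ of \cite{bhakta2023stochastic}), the only genuinely new ingredient being the monotonicity of $\varkappa$ recorded in Lemma \ref{lemma3.3}. First I would rewrite the survival functions: since $\sum_i p_i = \sum_i p_i^* = 1$, for fixed $x>0$ and with the shorthand $u = 1-(1+x)^{-\alpha}\in(0,1)$,
\begin{eqnarray*}
\bar{F}_{R_n(\boldsymbol{X}_{\alpha,\boldsymbol{\beta}};\boldsymbol{p})}(x) = 1 - \sum_{i=1}^{n} p_i\, u^{\beta_i} =: 1 - \Phi(\boldsymbol{p},\boldsymbol{\beta}),
\end{eqnarray*}
so that the asserted order $R_n(\boldsymbol{X}_{\alpha,\boldsymbol{\beta}};\boldsymbol{p})\leq_{st} R_n(\boldsymbol{X}_{\alpha,\boldsymbol{\beta^*}};\boldsymbol{p^*})$ is equivalent to $\Phi(\boldsymbol{p},\boldsymbol{\beta}) \geq \Phi(\boldsymbol{p^*},\boldsymbol{\beta^*})$ for every $x>0$, where $\Phi(\boldsymbol{p},\boldsymbol{\beta})=\sum_i \varkappa(x;p_i,\alpha,\beta_i)$.

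Because $(\boldsymbol{p},\boldsymbol{\beta})\gg(\boldsymbol{p^*},\boldsymbol{\beta^*})$, the target matrix arises from $(\boldsymbol{p},\boldsymbol{\beta})$ by a finite product of $T$-transform matrices, each of which alters only two columns, say $r$ and $s$. I would therefore fix $x$, regard $\Phi$ as a function of the parameter matrix, and invoke Lemma \ref{lemma2.1} on the affected pair of columns (the remaining $n-2$ columns are unchanged and cancel in the comparison). Condition (i) of Lemma \ref{lemma2.1} is immediate since $\Phi$ is symmetric under simultaneous permutation of the columns $(p_i,\beta_i)$. The substance is condition (ii): writing $\Phi_{1i}=\partial\Phi/\partial p_i = u^{\beta_i}$ and $\Phi_{2i}=\partial\Phi/\partial\beta_i = p_i u^{\beta_i}\ln u$, it reduces to verifying
\begin{eqnarray*}
(p_r-p_s)\left(u^{\beta_r}-u^{\beta_s}\right) + (\beta_r-\beta_s)\left(p_r u^{\beta_r}-p_s u^{\beta_s}\right)\ln u \geq 0.
\end{eqnarray*}

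The crux of the argument, and the step I expect to be the main obstacle, is the sign analysis of this two-term expression under the constraint $(\boldsymbol{p},\boldsymbol{\beta})\in\mathcal{L}_n$. That constraint forces $(p_r-p_s)(\beta_r-\beta_s)\leq 0$, so without loss of generality I may take $p_r\geq p_s$ and $\beta_r\leq\beta_s$. Since $u\in(0,1)$ gives $\ln u <0$ and $u^{\beta_r}\geq u^{\beta_s}$, the first term is nonnegative; for the second, $(\beta_r-\beta_s)\ln u\geq 0$, while Lemma \ref{lemma3.3} (increasing in $p$, decreasing in $\beta$) yields $p_r u^{\beta_r}=\varkappa(x;p_r,\alpha,\beta_r)\geq \varkappa(x;p_s,\alpha,\beta_s)=p_s u^{\beta_s}$, so the second term is nonnegative as well. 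Hence condition (ii) holds and Lemma \ref{lemma2.1} delivers $\Phi(M)\geq\Phi(MT_{\omega})$ across each single transform $T_\omega$.

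Finally I would iterate this estimate along the chain, as in Theorem \ref{theorem3.5}, to obtain $\Phi(\boldsymbol{p},\boldsymbol{\beta})\geq\Phi(\boldsymbol{p^*},\boldsymbol{\beta^*})$ and hence the claimed order; note that the sign analysis used no restriction on $\beta$, consistent with the absence of a $\beta\in(0,1)$ hypothesis here. The only delicate point in the iteration is keeping the intermediate matrices in $\mathcal{L}_n$ so that the pairwise $\mathcal{L}_2$ hypothesis of Lemma \ref{lemma2.1} remains available at every step; for a common-structure chain this is automatic, matching the scope of Corollary \ref{corollary3.2}, and leaves the differently-structured case to the stronger hypotheses of Theorem \ref{theorem3.6}.
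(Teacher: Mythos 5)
Your proposal is correct and follows essentially the same route as the paper: the paper proves the $n=2$ case (Theorem \ref{theorem3.7}) by verifying the two conditions of Lemma \ref{lemma2.1} for the survival function via Lemmas \ref{lemma3.1} and \ref{lemma3.3}, obtaining $\Delta_2(\boldsymbol{p},\boldsymbol{\beta})\leq 0$, and then extends to general $n$ exactly as in Theorem \ref{theorem3.5} by applying the pairwise argument to the two columns affected by each $T$-transform. Your only cosmetic difference is working with the cdf $\Phi$ (so the key quantity is $\geq 0$) rather than the survival function (where it is $\leq 0$), and your remark about keeping intermediate matrices in $\mathcal{L}_n$ along the chain is a fair flag of a point the paper itself leaves implicit.
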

	
	\begin{proof}
		The proof is similar to that of Theorem \ref{theorem3.5}, and thus it is omitted.
	\end{proof}
	
	The following corollary can be established from Theorem \ref{theorem3.8} using the arguments similar to corollary \ref{corollary3.2}.
	
	\begin{corollary}\label{corollary3.3}
		Consider $k$ number of $T$-transform matrices, denoted by $T_{\omega_1},\ldots,T_{\omega_k}$ having a common structure. Then, under the setup in Theorem \ref{theorem3.8}, for $(\boldsymbol{p},\boldsymbol{\beta})\in\mathcal{L}_n$ and fixed $\alpha>0$,
		\begin{eqnarray*}
			\left(\begin{smallmatrix}
				p_1\ldots p_n\\
				\beta_1\ldots \beta_n
			\end{smallmatrix}\right)\gg
			\left(\begin{smallmatrix}
				p_1^*\ldots p_n^*\\
				\beta_1^*\ldots \beta_n^*
			\end{smallmatrix}\right)\Rightarrow R_n(\boldsymbol{X}_{\alpha,\boldsymbol{\beta}};\boldsymbol{p})\leq_{st} R_n(\boldsymbol{X}_{\alpha,\boldsymbol{\beta^*}};\boldsymbol{p^*}).
		\end{eqnarray*}
	\end{corollary}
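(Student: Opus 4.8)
The plan is to reduce the chain-majorization hypothesis to a single $T$-transform matrix and then invoke Theorem \ref{theorem3.8} directly; this exactly parallels the passage from Theorem \ref{theorem3.5} to Corollary \ref{corollary3.2}. By Definition \ref{definition2.4}, the hypothesis $\left(\begin{smallmatrix} p_1\ldots p_n\\ \beta_1\ldots \beta_n \end{smallmatrix}\right)\gg \left(\begin{smallmatrix} p_1^*\ldots p_n^*\\ \beta_1^*\ldots \beta_n^* \end{smallmatrix}\right)$ means precisely that $\left(\begin{smallmatrix} p_1^*\ldots p_n^*\\ \beta_1^*\ldots \beta_n^* \end{smallmatrix}\right)=\left(\begin{smallmatrix} p_1\ldots p_n\\ \beta_1\ldots \beta_n \end{smallmatrix}\right)T_{\omega_1}\cdots T_{\omega_k}$ for the given $T$-transform matrices $T_{\omega_1},\ldots,T_{\omega_k}$, which here share a common structure.

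First I would record the elementary closure fact quoted before Corollary \ref{corollary3.2}. Common structure means each $T_{\omega_j}=\omega_j I+(1-\omega_j)\Pi$ for one fixed transposition-type permutation matrix $\Pi$, so that $\Pi^2=I$. A direct expansion then gives $T_{\omega_1}T_{\omega_2}=\bigl[\omega_1\omega_2+(1-\omega_1)(1-\omega_2)\bigr]I+\bigl[\omega_1(1-\omega_2)+(1-\omega_1)\omega_2\bigr]\Pi$, and the two bracketed coefficients are nonnegative and sum to $1$. Hence $T_{\omega_1}T_{\omega_2}$ is again a $T$-transform matrix with the same structure, and by induction on $k$ the full product $T:=T_{\omega_1}\cdots T_{\omega_k}$ is a single $T$-transform matrix $T=\omega I+(1-\omega)\Pi$ with $\omega\in[0,1]$.

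I would then conclude as follows. The previous step shows $\left(\begin{smallmatrix} p_1^*\ldots p_n^*\\ \beta_1^*\ldots \beta_n^* \end{smallmatrix}\right)=\left(\begin{smallmatrix} p_1\ldots p_n\\ \beta_1\ldots \beta_n \end{smallmatrix}\right)T$ for a single $T$-transform matrix $T$, so $\left(\begin{smallmatrix} p_1\ldots p_n\\ \beta_1\ldots \beta_n \end{smallmatrix}\right)\gg \left(\begin{smallmatrix} p_1^*\ldots p_n^*\\ \beta_1^*\ldots \beta_n^* \end{smallmatrix}\right)$ is realized through one transform. Since $(\boldsymbol{p},\boldsymbol{\beta})\in\mathcal{L}_n$ and $\alpha>0$ is fixed, all the hypotheses of Theorem \ref{theorem3.8} are in force, and that theorem yields $R_n(\boldsymbol{X}_{\alpha,\boldsymbol{\beta}};\boldsymbol{p})\leq_{st} R_n(\boldsymbol{X}_{\alpha,\boldsymbol{\beta^*}};\boldsymbol{p^*})$, as required.

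The entire content of the argument lies in the reduction to a single transform; no new analytic estimate is needed beyond what Theorem \ref{theorem3.8} already supplies. The only place requiring care, and the reason the common-structure hypothesis cannot be dropped, is the closure computation above: it is exactly the shared permutation $\Pi$ (whence $\Pi^2=I$) that keeps the product inside the one-parameter family of $T$-transforms. If the $T_{\omega_j}$ had different structures, the product would in general fail to be a single $T$-transform, and the intermediate matrices $(\boldsymbol{p},\boldsymbol{\beta})T_{\omega_1}\cdots T_{\omega_i}$ need not remain in $\mathcal{L}_n$; this is precisely why the different-structure analogue (the counterpart of Theorem \ref{theorem3.6}) must carry the extra assumption that every such partial product lies in $\mathcal{L}_n$.
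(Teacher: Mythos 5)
Your proposal is correct and follows essentially the same route as the paper: the paper likewise justifies this corollary by the closure fact that a finite product of $T$-transform matrices with a common structure is again a single $T$-transform matrix, and then applies Theorem \ref{theorem3.8}. Your explicit verification that $T_{\omega_1}T_{\omega_2}=\bigl[\omega_1\omega_2+(1-\omega_1)(1-\omega_2)\bigr]I+\bigl[\omega_1(1-\omega_2)+(1-\omega_1)\omega_2\bigr]\Pi$ simply spells out the detail the paper leaves as ``well-known.''
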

	
	The next theorem proves a result associated with $k$ $(\geq 2)$ number of $T$-transformed matrices having different structures.
	
	\begin{theorem}\label{theorem3.9}
		Let $T_{\omega_1},\ldots,T_{\omega_i}$, $i=1,\ldots,k~(k\geq 2)$ be the $k$ number of $T$-transformed matrices with different structures. Then, under the setup as in Theorem \ref{theorem3.8}, for
		$(\boldsymbol{p},\boldsymbol{\beta})\in\mathcal{L}_n$,
		$\left(\begin{smallmatrix}
		p_1\ldots p_n\\
		\beta_1\ldots \beta_n
		\end{smallmatrix}\right)T_{\omega_1}\ldots T_{\omega_i}\in\mathcal{L}_n$, $i=1,\ldots,(k-1)$, and fixed $\alpha>0$,
		\begin{eqnarray*}
			\left(\begin{smallmatrix}
				p_1^*\ldots p_n^*\\
				\beta_1^*\ldots \beta_n^*
			\end{smallmatrix}\right)=
			\left(\begin{smallmatrix}
				p_1\ldots p_n\\
				\beta_1\ldots \beta_n
			\end{smallmatrix}\right)T_{\omega_1}\ldots T_{\omega_k}\Rightarrow R_n(\boldsymbol{X}_{\alpha,\boldsymbol{\beta}};\boldsymbol{p})\leq_{st} R_n(\boldsymbol{X}_{\alpha,\boldsymbol{\beta^*}};\boldsymbol{p^*}).
		\end{eqnarray*}
	\end{theorem}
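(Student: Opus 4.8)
The plan is to build the full product $T_{\omega_1}\cdots T_{\omega_k}$ one factor at a time and to recognize that, although a product of $T$-transforms with different structures need not itself be a $T$-transform, each single factor is, so Theorem~\ref{theorem3.8} applies at every individual step. First I would introduce the intermediate parameter matrices
$$M_0=\left(\begin{smallmatrix}p_1\ldots p_n\\\beta_1\ldots \beta_n\end{smallmatrix}\right),\qquad M_i=M_0\,T_{\omega_1}\cdots T_{\omega_i},\quad i=1,\ldots,k,$$
so that $M_k=\left(\begin{smallmatrix}p_1^*\ldots p_n^*\\\beta_1^*\ldots \beta_n^*\end{smallmatrix}\right)$. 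Writing $M_i=M_{i-1}T_{\omega_i}$ shows that every consecutive pair is related by a single $T$-transform, whence $M_{i-1}\gg M_i$ for each $i=1,\ldots,k$. I denote by $\boldsymbol{p}^{(i)}$ and $\boldsymbol{\beta}^{(i)}$ the two rows of $M_i$, so that $M_0$ carries $(\boldsymbol{p},\boldsymbol{\beta})$ and $M_k$ carries $(\boldsymbol{p^*},\boldsymbol{\beta^*})$.

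Next I would use the hypotheses to ensure that the source matrix at each step lies in $\mathcal{L}_n$. By assumption $M_0=(\boldsymbol{p},\boldsymbol{\beta})\in\mathcal{L}_n$, and $M_i\in\mathcal{L}_n$ for $i=1,\ldots,k-1$; hence every matrix $M_0,M_1,\ldots,M_{k-1}$ belongs to $\mathcal{L}_n$. For each fixed $i\in\{1,\ldots,k\}$ the pair $(M_{i-1},M_i)$ therefore meets the hypotheses of Theorem~\ref{theorem3.8}, namely $M_{i-1}\in\mathcal{L}_n$ and $M_{i-1}\gg M_i$. Applying that theorem with $\boldsymbol{\beta}^{(i-1)},\boldsymbol{p}^{(i-1)}$ playing the role of $\boldsymbol{\beta},\boldsymbol{p}$ and $\boldsymbol{\beta}^{(i)},\boldsymbol{p}^{(i)}$ that of $\boldsymbol{\beta^*},\boldsymbol{p^*}$ yields
$$R_n(\boldsymbol{X}_{\alpha,\boldsymbol{\beta}^{(i-1)}};\boldsymbol{p}^{(i-1)})\leq_{st}R_n(\boldsymbol{X}_{\alpha,\boldsymbol{\beta}^{(i)}};\boldsymbol{p}^{(i)}),\qquad i=1,\ldots,k.$$
Chaining these $k$ inequalities and invoking the transitivity of the usual stochastic order (see \cite{shaked2007stochastic}) gives $R_n(\boldsymbol{X}_{\alpha,\boldsymbol{\beta}};\boldsymbol{p})\leq_{st}R_n(\boldsymbol{X}_{\alpha,\boldsymbol{\beta^*}};\boldsymbol{p^*})$, the desired conclusion.

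The only delicate point, and the reason the stepwise membership conditions cannot be dropped, is the verification that each intermediate source matrix $M_{i-1}$ lies in $\mathcal{L}_n$: Theorem~\ref{theorem3.8} may be invoked only when its first argument satisfies the sign pattern $(\varrho_i-\varrho_j)(\tau_i-\tau_j)\leq0$ defining $\mathcal{L}_n$, and a product of $T$-transforms with differing structures need not preserve that pattern. This is precisely the contrast with Corollary~\ref{corollary3.3}, where a common structure collapses $T_{\omega_1}\cdots T_{\omega_k}$ into a single $T$-transform so that one application of Theorem~\ref{theorem3.8} suffices; here the assumptions $M_i\in\mathcal{L}_n$ for $i=1,\ldots,k-1$ are exactly what licenses the induction. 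No new analytic estimate is needed beyond Theorem~\ref{theorem3.8}; the substance of the argument is the bookkeeping of the chain together with transitivity of $\leq_{st}$.
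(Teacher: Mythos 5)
Your argument is correct and is exactly the intended one: the paper omits an explicit proof of Theorem \ref{theorem3.9} (the analogous Theorem \ref{theorem3.6} is deferred to the same chain-decomposition argument in the cited reference), and that argument is precisely your stepwise factorization $M_i=M_{i-1}T_{\omega_i}$, one application of Theorem \ref{theorem3.8} per factor using the hypothesis $M_{i-1}\in\mathcal{L}_n$, followed by transitivity of $\leq_{st}$. You also correctly identify why the intermediate membership conditions are needed when the $T$-transforms have different structures.
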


	\begin{figure}
		\begin{center}
			\subfigure[]{\label{figure5(a)}\includegraphics[width=3.2in]{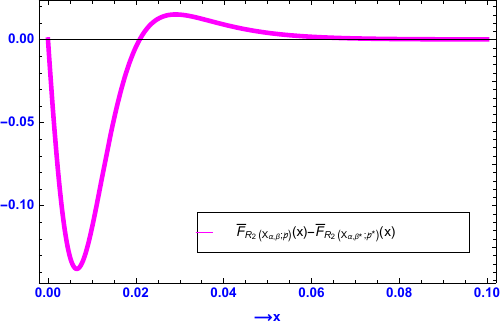}}
			\subfigure[]{\label{figure5(b)}\includegraphics[width=3.09in]{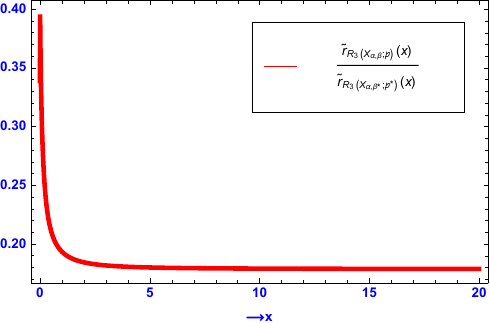}}
			\caption{(a) Plot of the difference between the sfs of $R_2(\boldsymbol{X}_{\alpha,\boldsymbol{\beta}};\boldsymbol{p})$ and $R_2(\boldsymbol{X}_{\alpha,\boldsymbol{\beta^*}};\boldsymbol{p^*})$ in Counterexample \ref{counterexample3.5}.
				(b) Plot of the ratio of the rhs of $R_3(\boldsymbol{X}_{\alpha,\boldsymbol{\beta}};\boldsymbol{p})$ and $R_3(\boldsymbol{X}_{\alpha,\boldsymbol{\beta^*}};\boldsymbol{p^*})$ in Example \ref{example3.6}.}
		\end{center}
	\end{figure}

	We end this subsection with ageing faster order between two MRVs. Here, we assume that mixing proportions and one of the shape parameter vectors are varying. We recall that using ageing faster order one is capable to compare the relative ageings of two engineering systems. In the following theorem, we study the ageing faster order in terms of the reversed hazard rate function.
	
	\begin{theorem}\label{theorem3.10}
		Let $R_n(\boldsymbol{X}_{\alpha,\boldsymbol{\beta}};\boldsymbol{p})$ and $R_n(\boldsymbol{X}_{\alpha,\boldsymbol{\beta^*}};\boldsymbol{p^*})$ be the MRVs with reversed hazard functions     $\tilde{r}_{R_n(\boldsymbol{X}_{\alpha,\boldsymbol{\beta}};\boldsymbol{p})}(x)$ and $\tilde{r}_{R_n(\boldsymbol{X}_{\alpha,\boldsymbol{\beta^*}};\boldsymbol{p^*})}(x)$, respectively. Then,
		\begin{eqnarray*}
			R_n(\boldsymbol{X}_{\alpha,\boldsymbol{\beta}};\boldsymbol{p})\leq_{R-rh}R_n(\boldsymbol{X}_{\alpha,\boldsymbol{\beta^*}};\boldsymbol{p^*}),
		\end{eqnarray*}
		provided $\max\limits_{\{i,j:i\neq j\}}|\beta_i-\beta_j|\leq\min\limits_{\{i,j:i\neq j\}}|\beta_i^*-\beta_j^*|$, for all $i,j=1,\ldots,n$.
	\end{theorem}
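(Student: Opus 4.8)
The plan is to collapse the reversed-hazard-rate ratio to a ratio of weighted mean exponents. Since both mixtures share the common shape parameter $\alpha$, set $u=u(x)=1-(1+x)^{-\alpha}$, an increasing bijection from $(0,\infty)$ onto $(0,1)$, and note $u^{\beta_i}=F_{X_{\alpha,\beta_i}}(x)$. The cdf of the first mixture is $F(x)=\sum_{i=1}^{n}p_i u^{\beta_i}$, so differentiating with $u'(x)=\alpha(1+x)^{-\alpha-1}$ gives
\[
\tilde{r}_{R_n(\boldsymbol{X}_{\alpha,\boldsymbol{\beta}};\boldsymbol{p})}(x)=\frac{u'(x)}{u}\cdot\frac{\sum_{i=1}^{n}p_i\beta_i u^{\beta_i}}{\sum_{i=1}^{n}p_i u^{\beta_i}}=:\frac{u'(x)}{u}\,\bar{\beta}(u),
\]
and likewise $\tilde{r}_{R_n(\boldsymbol{X}_{\alpha,\boldsymbol{\beta^*}};\boldsymbol{p^*})}(x)=\frac{u'(x)}{u}\bar{\beta}^*(u)$ with $\bar{\beta}^*(u)=\big(\sum_i p_i^*\beta_i^* u^{\beta_i^*}\big)/\big(\sum_i p_i^* u^{\beta_i^*}\big)$. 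The common factor $u'/u$ cancels in the quotient, leaving $\tilde{r}_U/\tilde{r}_V=\bar{\beta}(u)/\bar{\beta}^*(u)$; since $u$ is increasing in $x$, proving $\leq_{R-rh}$ reduces to showing that $\bar{\beta}(u)/\bar{\beta}^*(u)$ is decreasing in $u\in(0,1)$.

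Next I would differentiate. Taking logarithms, the desired monotone decrease is equivalent to $(\log\bar{\beta})'(u)\le(\log\bar{\beta}^*)'(u)$. A short computation identifies $\bar{\beta}$ with the derivative of a cumulant generating function: under the substitution $t=\log u$ one has $\bar{\beta}=\frac{d}{dt}\log\big(\sum_i p_i e^{\beta_i t}\big)=E_{w}[\beta]$ and $\frac{d}{dt}\bar{\beta}=\mathrm{Var}_{w}(\beta)$, where $w_i(u)\propto p_i u^{\beta_i}$ are the cdf-weighted mixing weights. Hence $(\log\bar{\beta})'(u)=\frac{1}{u}\,\mathrm{Var}_{w}(\beta)/E_{w}[\beta]$, and the factor $1/u>0$ cancels, so the theorem reduces to the uniform dispersion comparison
\[
\frac{\mathrm{Var}_{w(u)}(\beta)}{E_{w(u)}[\beta]}\;\le\;\frac{\mathrm{Var}_{w^*(u)}(\beta^*)}{E_{w^*(u)}[\beta^*]}\qquad\text{for all }u\in(0,1).
\]
This is the analytic core of the statement: a pointwise-in-$u$ comparison of the variance-to-mean ratios of the two tilted exponent distributions.

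To deploy the hypothesis $\max_{i\ne j}|\beta_i-\beta_j|\le\min_{i\ne j}|\beta_i^*-\beta_j^*|$, I would bound the two sides separately. For the left side, Popoviciu's inequality gives $\mathrm{Var}_{w}(\beta)\le\tfrac14\big(\max_{i\ne j}|\beta_i-\beta_j|\big)^2$, while $E_{w}[\beta]\ge\min_i\beta_i$ controls its denominator from below; for the right side I would seek a matching lower bound on $\mathrm{Var}_{w^*}(\beta^*)$ driven by the minimal separation $\min_{i\ne j}|\beta_i^*-\beta_j^*|$, together with $E_{w^*}[\beta^*]\le\max_i\beta_i^*$. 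The case $n=2$ is the transparent base, where $\mathrm{Var}_{w}(\beta)=w_1w_2(\beta_1-\beta_2)^2$ and the ratio can be manipulated directly; the aggregation to general $n$ is precisely where the pairwise ``range versus minimal gap'' form of the hypothesis is needed to dominate all cross terms simultaneously.

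The main obstacle is exactly this last uniform variance comparison. The difficulty is that the tilted weights $w^*(u)$ need not remain bounded away from the vertices of the simplex as $u\to 0^+$ or $u\to 1^-$, so $\mathrm{Var}_{w^*}(\beta^*)$ admits no positive uniform lower bound; in fact both dispersion ratios vanish at the endpoints, and the inequality must be controlled through the \emph{relative rates} at which they vanish (governed by the smallest exponent gaps) rather than by crude constant bounds. I would therefore expect the delicate step to be a careful endpoint and interior analysis in $u$ showing that the left ratio is dominated by the right throughout $(0,1)$ under the stated gap condition, with the interior handled by the Popoviciu and mean estimates above and the boundary behaviour handled by the gap-driven asymptotics of $w$ and $w^*$.
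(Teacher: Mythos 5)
Your reduction is correct and is a genuinely different, more transparent route than the paper's. Writing $u=1-(1+x)^{-\alpha}$ and observing that $\tilde r_U(x)/\tilde r_V(x)=\bar\beta(u)/\bar\beta^*(u)$ with $\bar\beta(u)=\sum_i p_i\beta_i u^{\beta_i}/\sum_i p_i u^{\beta_i}$, so that the $\le_{R-rh}$ claim is equivalent to the pointwise comparison $\mathrm{Var}_{w(u)}(\beta)/E_{w(u)}[\beta]\le \mathrm{Var}_{w^*(u)}(\beta^*)/E_{w^*(u)}[\beta^*]$ on $(0,1)$, strips away everything inessential; the paper instead differentiates $f_1F_1^{-1}f_2^{-1}F_2\cdot(\cdots)$ directly and organizes the result as a quadruple sum $\sum_{i,j,k,l}p_ip_jp_k^*p_l^*(\cdots)\Delta_{i,j,k,l}(x)$ with $\Delta_{i,j,k,l}(x)\stackrel{sign}{=}(\beta_i-\beta_j)-(\beta_k^*-\beta_l^*)$, asserting each term is nonpositive. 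However, your argument is not complete: the entire analytic content of the theorem is the uniform variance-to-mean comparison, and you only sketch a strategy (Popoviciu above, a hoped-for gap-driven lower bound below) while conceding that the endpoint degeneration of $w^*(u)$ defeats any uniform bound. As submitted this is a reduction plus a plan, not a proof.

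Worse, the step you flag as delicate actually fails, so no endpoint analysis will close it. The hypothesis $\max_{i\ne j}|\beta_i-\beta_j|\le\min_{i\ne j}|\beta_i^*-\beta_j^*|$ is invariant under translating every $\beta_i^*$ by a constant, whereas your target quantity $\mathrm{Var}_{w^*}(\beta^*)/E_{w^*}[\beta^*]$ is not. Take $n=2$, $\boldsymbol{p}=\boldsymbol{p^*}=(\tfrac12,\tfrac12)$, $\boldsymbol{\beta}=(1,2)$, $\boldsymbol{\beta^*}=(100,101)$, so the gap condition holds with equality. Then, after cancelling the common factor $u'/u$, one gets $\tilde r_U(x)/\tilde r_V(x)=(1+2u)/(100+101u)$, whose $u$-derivative equals $99/(100+101u)^2>0$; the ratio is strictly increasing in $x$, so the asserted conclusion is false. (Equivalently, as $u\to1^-$ the tilted weights tend to $\boldsymbol{p}$ and $\boldsymbol{p^*}$, and the left ratio $\tfrac{1/4}{3/2}$ exceeds the right ratio $\tfrac{1/4}{201/2}$.) The same example shows the paper's own claim that every $\Delta_{i,j,k,l}\le0$ cannot hold: with $i=j$ and $k\ne l$ it would force $\beta_k^*\le\beta_l^*$ for both orderings of $(k,l)$. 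So your instinct that something nontrivial is being glossed over is sound, but the correct conclusion is that the stated sufficient condition does not imply the result; the missing step is not merely delicate, it is unprovable without strengthening the hypothesis (e.g., controlling the means $E[\beta]$, $E[\beta^*]$ and not just the gaps).
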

	
	\begin{proof}
		See Appendix.
	\end{proof}
	
	The following example illustrates Theorem \ref{theorem3.10}, for $n=3$.

	\begin{example}\label{example3.6}
		Assume that $\boldsymbol{\beta}=(0.1,0.2,0.3)$, $\boldsymbol{\beta^*}=(0.5,1,2)$, $\boldsymbol{p}=(0.1,0.3,0.6)$, $\boldsymbol{p^*}=(0.2,0.3,0.5)$, and $\alpha=2$.   Clearly, the condition $\max|\beta_i-\beta_j|\leq\min|\beta_i^*-\beta_j^*|$ is satisfied for $i\neq j=1,2,3$. Taking these numerical values of the parameters, $\tilde{r}_{R_3(\boldsymbol{X}_{\alpha,\boldsymbol{\beta}};\boldsymbol{p})}(x)/\tilde{r}_{R_3(\boldsymbol{X}_{\alpha,\boldsymbol{\beta^*}};\boldsymbol{p^*})}(x)$ is plotted in Figure \ref{figure5(b)}, validating the result in Theorem \ref{theorem3.10}.
	\end{example}

	%\begin{figure}[ht]
	%   \centering
	%   %\scalebox{0.5}
	%   \includegraphics[width=4.0in]{counterexam_04.eps}
	%   \caption{Plot of the difference between the sfs of $R_2(\boldsymbol{X}_{\alpha,\boldsymbol{\beta}};\boldsymbol{p})$ and $R_2(\boldsymbol{X}_{\alpha,\boldsymbol{\gamma}};\boldsymbol{q})$ in Counterexample \ref{counterexample3.5}.}
	%   \label{figure5}
	%\end{figure}

	\subsection{Ordering results for MMs when heterogeneity presents in three parameters}\label{subsection3.3}
	In the previous subsection, we assume that two parameters are heterogeneous. In this subsection, we present the ordering results considering heterogeneity in three parameters. We mainly obtain the stochastic comparison results between two MRVs with respect to reversed hazard rate and likelihood ratio orders. First, we provide reversed hazard rate order between $R_n(\boldsymbol{X}_{\boldsymbol{\alpha},\boldsymbol{\beta}};\boldsymbol{p})$ and $R_n(\boldsymbol{X}_{\boldsymbol{\alpha^*},\boldsymbol{\beta^*}};\boldsymbol{p^*})$.
	
	\begin{theorem}\label{theorem3.11}
		Consider two MRVs $R_n(\boldsymbol{X}_{\boldsymbol{\alpha},\boldsymbol{\beta}};\boldsymbol{p})$ and $R_n(\boldsymbol{X}_{\boldsymbol{\alpha^*},\boldsymbol{\beta^*}};\boldsymbol{p^*})$ with sfs
		$\bar{F}_{R_n(\boldsymbol{X}_{\boldsymbol{\alpha},\boldsymbol{\beta}};\boldsymbol{p})}(x)$ and
		$\bar{F}_{R_n(\boldsymbol{X}_{\boldsymbol{\alpha^*},\boldsymbol{\beta^*}};\boldsymbol{p^*})}(x)$, respectively. Then,
		\begin{eqnarray*}
			R_n(\boldsymbol{X}_{\boldsymbol{\alpha},\boldsymbol{\beta}};\boldsymbol{p})\leq_{rh}R_n(\boldsymbol{X}_{\boldsymbol{\alpha^*},\boldsymbol{\beta^*}};\boldsymbol{p^*}),
		\end{eqnarray*}
		provided $\max\{\alpha_1^*,\ldots,\alpha_n^*\}\leq\min\{\alpha_1,\ldots,\alpha_n\}$ and $\max\{\alpha_1\beta_1,\ldots,\alpha_n\beta_n\}\leq\min\{\alpha_1^*\beta_1^*,\ldots,\alpha_n^*\beta_n^*\}$.
	\end{theorem}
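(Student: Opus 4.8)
The plan is to work with the equivalent characterization of the reversed hazard rate order in Definition~\ref{definition2.1}(ii): it suffices to show that $\tilde{r}_{R_n(\boldsymbol{X}_{\boldsymbol{\alpha},\boldsymbol{\beta}};\boldsymbol{p})}(x)\leq \tilde{r}_{R_n(\boldsymbol{X}_{\boldsymbol{\alpha^*},\boldsymbol{\beta^*}};\boldsymbol{p^*})}(x)$ for every $x>0$. The first step is to record the reversed hazard rate of a single $\mathcal{IK}(\alpha,\beta)$ component. Writing $t=1+x>1$ and dividing \eqref{eq1.3} by \eqref{eq1.2}, the factor $(1-(1+x)^{-\alpha})^{\beta-1}$ cancels against $(1-(1+x)^{-\alpha})^{\beta}$, leaving the compact form
\begin{eqnarray*}
\tilde{r}_{\alpha,\beta}(x)=\frac{\alpha\beta(1+x)^{-\alpha-1}}{1-(1+x)^{-\alpha}}=\frac{\alpha\beta}{t\,(t^{\alpha}-1)}.
\end{eqnarray*}

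Next I would rewrite each mixture reversed hazard rate as an $x$-dependent convex combination of these component rates. Since the mixture cdf is $\sum_i p_iF_{\alpha_i,\beta_i}(x)$ and the mixture pdf is $\sum_i p_if_{\alpha_i,\beta_i}(x)$, setting $w_i(x)=p_iF_{\alpha_i,\beta_i}(x)\big/\sum_{j}p_jF_{\alpha_j,\beta_j}(x)$ gives
\begin{eqnarray*}
\tilde{r}_{R_n(\boldsymbol{X}_{\boldsymbol{\alpha},\boldsymbol{\beta}};\boldsymbol{p})}(x)=\sum_{i=1}^{n}w_i(x)\,\tilde{r}_{\alpha_i,\beta_i}(x),\qquad w_i(x)\geq0,\quad \sum_{i=1}^{n}w_i(x)=1,
\end{eqnarray*}
and analogously for the starred mixture. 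Because a convex combination always lies between the least and greatest of its terms, this yields the sandwich
\begin{eqnarray*}
\tilde{r}_{R_n(\boldsymbol{X}_{\boldsymbol{\alpha},\boldsymbol{\beta}};\boldsymbol{p})}(x)\leq \max_{1\leq i\leq n}\tilde{r}_{\alpha_i,\beta_i}(x),\qquad \tilde{r}_{R_n(\boldsymbol{X}_{\boldsymbol{\alpha^*},\boldsymbol{\beta^*}};\boldsymbol{p^*})}(x)\geq \min_{1\leq j\leq n}\tilde{r}_{\alpha_j^*,\beta_j^*}(x).
\end{eqnarray*}
This is the decisive structural observation: it eliminates the awkward $x$-dependent weights entirely and reduces the theorem to a componentwise comparison of single-component rates.

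It then remains to show $\max_i\tilde{r}_{\alpha_i,\beta_i}(x)\leq\min_j\tilde{r}_{\alpha_j^*,\beta_j^*}(x)$, which, after cancelling the common factor $t$, amounts to the componentwise inequality
\begin{eqnarray*}
\frac{\alpha_i\beta_i}{t^{\alpha_i}-1}\leq\frac{\alpha_j^*\beta_j^*}{t^{\alpha_j^*}-1}\qquad\text{for all }i,j.
\end{eqnarray*}
Here the two hypotheses act separately on numerator and denominator. From $\max_k\alpha_k^*\leq\min_k\alpha_k$ we have $\alpha_j^*\leq\alpha_i$, and since $t>1$ this gives $0<t^{\alpha_j^*}-1\leq t^{\alpha_i}-1$; from $\max_k\alpha_k\beta_k\leq\min_k\alpha_k^*\beta_k^*$ we have $\alpha_i\beta_i\leq\alpha_j^*\beta_j^*$. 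Chaining the two estimates,
\begin{eqnarray*}
\frac{\alpha_i\beta_i}{t^{\alpha_i}-1}\leq\frac{\alpha_j^*\beta_j^*}{t^{\alpha_i}-1}\leq\frac{\alpha_j^*\beta_j^*}{t^{\alpha_j^*}-1},
\end{eqnarray*}
which is exactly the bound needed, and the theorem follows.

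I expect the main obstacle to be conceptual rather than computational: one must resist attacking the two full mixture ratios head-on, where the $x$-dependent weights $w_i(x)$ make a direct monotonicity comparison genuinely delicate, and instead invoke the convex-combination sandwich. Once that reduction is in place, the single-component rate $\tilde{r}_{\alpha,\beta}(x)=\alpha\beta/[t(t^{\alpha}-1)]$ separates cleanly into a numerator governed by $\alpha\beta$ and a denominator governed by $\alpha$, and the two hypotheses of the theorem slot in precisely—one controlling the numerators and the other the denominators.
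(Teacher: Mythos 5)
Your proof is correct and, despite the different packaging (your convex-combination sandwich on the mixture reversed hazard rates versus the paper's differentiation of $F_{R_n(\boldsymbol{X}_{\boldsymbol{\alpha},\boldsymbol{\beta}};\boldsymbol{p})}/F_{R_n(\boldsymbol{X}_{\boldsymbol{\alpha^*},\boldsymbol{\beta^*}};\boldsymbol{p^*})}$ with the numerator expanded as a double sum over pairs $(i,j)$), both reduce to exactly the same componentwise inequality $\alpha_i\beta_i/(t^{\alpha_i}-1)\leq\alpha_j^*\beta_j^*/(t^{\alpha_j^*}-1)$ for all $i,j$, so this is essentially the paper's argument. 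If anything, your explicit two-step verification of that inequality is more complete than the paper's, which only asserts that the bracketed term is non-positive ``under the assumptions made.''
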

	
	\begin{proof}
		See Appendix.
	\end{proof}
	
	An illustrative example is provided below, for $n=3$.
	
	\begin{example}\label{example3.7}
		Assume that $\boldsymbol{p}=(0.1,0.7,0.2)$, $\boldsymbol{p^*}=(0.2,0.5,0.3)$, $\boldsymbol{\alpha}=(5,8,6)$, $\boldsymbol{\beta}=(2,1,1)$, $\boldsymbol{\alpha^*}=(3,4,2)$, and $\boldsymbol{\beta^*}=(5,3,6)$. Clearly, $\max\{3,4,2\}\leq\min\{5,8,6\}$ and $\max\{10,8,6\}\leq\min\{15,12,12\}$. Thus, the conditions of Theorem \ref{theorem3.11} hold. Now, the ratio of the cdfs of $R_3(\boldsymbol{X}_{\boldsymbol{\alpha},\boldsymbol{\beta}};\boldsymbol{p})$ and $R_3(\boldsymbol{X}_{\boldsymbol{\alpha^*},\boldsymbol{\beta^*}};\boldsymbol{p^*})$ is plotted in Figure \ref{figure6(a)}, illustrating the result  $R_3(\boldsymbol{X}_{\boldsymbol{\alpha},\boldsymbol{\beta}};\boldsymbol{p})\leq_{rh}R_3(\boldsymbol{X}_{\boldsymbol{\alpha^*},\boldsymbol{\beta^*}};\boldsymbol{p^*})$ in Theorem \ref{theorem3.11}.
	\end{example}
	
	The following counterexample illustrates that the result in Theorem \ref{theorem3.11} does not hold if $\max\{\alpha_1^*,\alpha_2^*,\alpha_3^*\}\nleq\min\{\alpha_1,\alpha_2,\alpha_3\}$.
	
	\begin{counterexample}\label{counterexample3.6}
		Set $\boldsymbol{p}=(0.60,0.25,0.15)$, $\boldsymbol{p^*}=(0.45,0.30,0.25)$,  $\boldsymbol{\alpha}=(1,3,5)$, $\boldsymbol{\beta}=(3,6,9)$, $\boldsymbol{\alpha^*}=(2,4,6)$, and $\boldsymbol{\beta^*}=(25,30,35)$. Obviously,  $\max\{2,4,6\}\nleq\min\{1,3,5\}$ and $\max\{3,18,45\}\leq\min\{50,120,210\}$. It can be seen that all the conditions of Theorem \ref{theorem3.11} are satisfied except $\max\{2,4,6\}\leq\min\{1,3,5\}$. Now, the ratio of the cdfs of the MRVs
		$R_3(\boldsymbol{X}_{\boldsymbol{\alpha},\boldsymbol{\beta}};\boldsymbol{p})$ and $R_3(\boldsymbol{X}_{\boldsymbol{\alpha^*},\boldsymbol{\beta^*}};\boldsymbol{p^*})$ is presented in Figure \ref{figure6(b)}, from which, we see that the ratio is nonmonotone in $x>0$. As a conclusion, $R_3(\boldsymbol{X}_{\boldsymbol{\alpha},\boldsymbol{\beta}};\boldsymbol{p})\nleq_{rh}R_3(\boldsymbol{X}_{\boldsymbol{\alpha^*},\boldsymbol{\beta^*}};\boldsymbol{p^*})$. In other
		words, the reversed hazard rate order between $R_3(\boldsymbol{X}_{\boldsymbol{\alpha},\boldsymbol{\beta}};\boldsymbol{p})$ and $R_3(\boldsymbol{X}_{\boldsymbol{\alpha^*},\boldsymbol{\beta^*}};\boldsymbol{p^*})$ does not hold.
	\end{counterexample}

	\begin{figure}
		\begin{center}
			\subfigure[]{\label{figure6(a)}\includegraphics[width=3.09in]{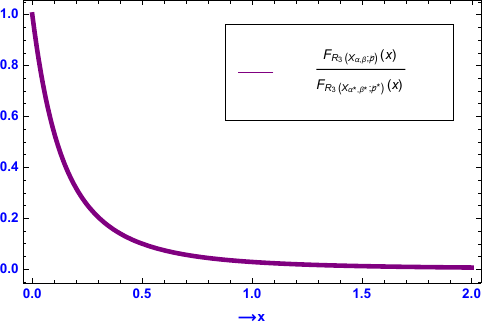}}
			\subfigure[]{\label{figure6(b)}\includegraphics[width=3.2in]{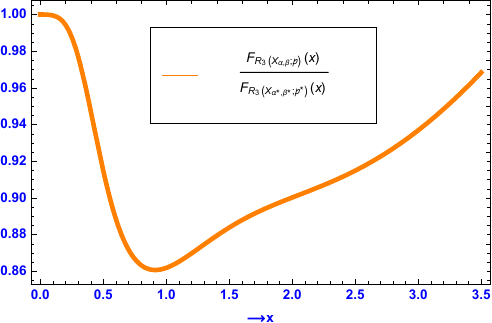}}
			\caption{(a) Plot of the ratio of the cdfs of $R_3(\boldsymbol{X}_{\boldsymbol{\alpha},\boldsymbol{\beta}};\boldsymbol{p})$ and $R_3(\boldsymbol{X}_{\boldsymbol{\alpha^*},\boldsymbol{\beta^*}};\boldsymbol{p^*})$ in Example \ref{example3.7}.
				(b) Plot of the ratio of the cdfs of $R_3(\boldsymbol{X}_{\boldsymbol{\alpha},\boldsymbol{\beta}};\boldsymbol{p})$ and $R_3(\boldsymbol{X}_{\boldsymbol{\alpha^*},\boldsymbol{\beta^*}};\boldsymbol{p^*})$ in Counterexample \ref{counterexample3.6}.}
		\end{center}
	\end{figure}
	
	In the next theorem, under some certain parameter restrictions, likelihood ratio ordering between two MRVs $R_n(\boldsymbol{X}_{\boldsymbol{\alpha},\boldsymbol{\beta}};\boldsymbol{p})$ and $R_n(\boldsymbol{X}_{\boldsymbol{\alpha^*},\boldsymbol{\beta^*}};\boldsymbol{p^*})$ is presented.
	
	\begin{theorem}\label{theorem3.12}
		Consider $f_{R_n(\boldsymbol{X}_{\boldsymbol{\alpha},\boldsymbol{\beta}};\boldsymbol{p})}(x)=\sum_{i=1}^{n}p_i\alpha_i\beta_i(1+x)^{-(\alpha_i+1)}(1-(1+x)^{-\alpha_i})^{\beta_i-1}$ and $f_{R_n(\boldsymbol{X}_{\boldsymbol{\alpha^*},\boldsymbol{\beta^*}};\boldsymbol{p^*})}(x)$ $=\sum_{i=1}^{n}p_i^*\alpha_i^*\beta_i^*(1+x)^{-(\alpha_i^*+1)}(1-(1+x)^{-\alpha_i^*})^{\beta_i^*-1}$ be the pdfs of the MRVs $R_n(\boldsymbol{X}_{\boldsymbol{\alpha},\boldsymbol{\beta}};\boldsymbol{p})$ and $R_n(\boldsymbol{X}_{\boldsymbol{\alpha^*},\boldsymbol{\beta^*}};\boldsymbol{p^*})$, respectively. Then,
		\begin{eqnarray*}
			R_n(\boldsymbol{X}_{\boldsymbol{\alpha},\boldsymbol{\beta}};\boldsymbol{p})\geq_{lr}R_n(\boldsymbol{X}_{\boldsymbol{\alpha^*},\boldsymbol{\beta^*}};\boldsymbol{p^*}),
		\end{eqnarray*}
		provided $\max\{\alpha_1,\ldots,\alpha_n\}\leq\min\{\alpha_1^*,\ldots,\alpha_n^*\}$ and $\min\{\alpha_1\beta_1,\ldots,\alpha_n\beta_n\}\geq\max\{\alpha_1^*\beta_1^*,\ldots,\alpha_n^*\beta_n^*\}$.
	\end{theorem}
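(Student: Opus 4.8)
The plan is to prove the pointwise (component-wise) likelihood ratio comparison and then lift it to the mixtures via a standard preservation identity. Write the $\mathcal{IK}$ component densities as $\phi_i(x)=\alpha_i\beta_i(1+x)^{-(\alpha_i+1)}(1-(1+x)^{-\alpha_i})^{\beta_i-1}$ and $\psi_j(x)=\alpha_j^*\beta_j^*(1+x)^{-(\alpha_j^*+1)}(1-(1+x)^{-\alpha_j^*})^{\beta_j^*-1}$, so that the two mixture pdfs are $f=\sum_i p_i\phi_i$ and $g=\sum_j p_j^*\psi_j$. By Definition \ref{definition2.1}(iii), the claim $R_n(\boldsymbol{X}_{\boldsymbol{\alpha},\boldsymbol{\beta}};\boldsymbol{p})\geq_{lr}R_n(\boldsymbol{X}_{\boldsymbol{\alpha^*},\boldsymbol{\beta^*}};\boldsymbol{p^*})$ is exactly the statement that $f/g$ is increasing in $x$. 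I would first record the elementary identity
\[
f'g-fg'=\sum_{i,j}p_i\,p_j^*\,\big(\phi_i'\psi_j-\phi_i\psi_j'\big)=\sum_{i,j}p_i\,p_j^*\,\psi_j^2\,(\phi_i/\psi_j)',
\]
which shows that $f/g$ is increasing as soon as every ratio $\phi_i/\psi_j$ is increasing, i.e. $\phi_i\geq_{lr}\psi_j$ for all $i,j$. The two hypotheses are tailored to deliver this pairwise: $\max_i\alpha_i\leq\min_j\alpha_j^*$ forces $\alpha_i\leq\alpha_j^*$ for every pair, and $\min_i\alpha_i\beta_i\geq\max_j\alpha_j^*\beta_j^*$ forces $\alpha_i\beta_i\geq\alpha_j^*\beta_j^*$ for every pair.

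Thus the whole problem collapses to a single two-component comparison: assuming $\alpha\leq\alpha^*$ and $\alpha\beta\geq\alpha^*\beta^*$, show $\phi/\psi$ is increasing. Putting $t=1+x>1$ and differentiating the logarithm gives $\tfrac{d}{dx}\log(\phi/\psi)=D(x)$ with
\[
D(x)=\frac{\alpha^*-\alpha}{t}+(\beta-1)\rho(\alpha)-(\beta^*-1)\rho(\alpha^*),\qquad \rho(\alpha):=\frac{\alpha\,t^{-\alpha-1}}{1-t^{-\alpha}}.
\]
The crux is the regrouping
\[
D(x)=\big[\beta\rho(\alpha)-\beta^*\rho(\alpha^*)\big]+\big[g(\alpha^*)-g(\alpha)\big],\qquad g(\alpha):=\frac{\alpha}{t}+\rho(\alpha)=\frac{\alpha\,t^{\alpha-1}}{t^{\alpha}-1},
\]
which isolates precisely the two monotonicities the hypotheses provide.

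For the first bracket I would write $\beta\rho(\alpha)=\alpha\beta\,\sigma(\alpha)$ with $\sigma(\alpha)=\tfrac{1}{t(t^{\alpha}-1)}$; since $t>1$, $\sigma$ is decreasing in $\alpha$, so $\alpha\leq\alpha^*$ gives $\sigma(\alpha)\geq\sigma(\alpha^*)>0$, and combined with $\alpha\beta\geq\alpha^*\beta^*$ this yields $\alpha\beta\,\sigma(\alpha)\geq\alpha^*\beta^*\,\sigma(\alpha^*)$, i.e. $\beta\rho(\alpha)\geq\beta^*\rho(\alpha^*)$. For the second bracket I must show $g$ is increasing in $\alpha$; the transparent route is the substitution $s=\alpha\log t>0$, under which $g(\alpha)=\tfrac{1}{t\log t}\,H(s)$ with $H(s)=\tfrac{s\,e^{s}}{e^{s}-1}$, and then $H'(s)\stackrel{sign}{=}1-e^{-s}(1+s)>0$ for $s>0$ because $e^{s}>1+s$. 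Hence $H$, and therefore $g$, is increasing, so $\alpha\leq\alpha^*$ gives $g(\alpha^*)\geq g(\alpha)$. With both brackets nonnegative we get $D(x)\geq0$, each $\phi_i/\psi_j$ is increasing, and the mixture identity above completes the argument.

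I expect the monotonicity of $g$ (equivalently of $H$) to be the main obstacle: $g$ is assembled from the antagonistic pieces $\alpha/t$ (increasing in $\alpha$) and $\rho(\alpha)$ (decreasing in $\alpha$), so only their particular combination is monotone, and it is the exponential substitution that makes the sign clean. The remaining ingredients — the mixture-preservation identity and the decreasingness of $\sigma$ — are routine.
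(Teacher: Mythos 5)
Your proof is correct and structurally identical to the paper's: both expand $f'g-fg'$ as the double sum $\sum_{i,j}p_ip_j^*\bigl(\phi_i'\psi_j-\phi_i\psi_j'\bigr)$ and reduce the claim to showing that each pairwise ratio $\phi_i/\psi_j$ is increasing under $\alpha_i\le\alpha_j^*$ and $\alpha_i\beta_i\ge\alpha_j^*\beta_j^*$. The only difference is that the paper merely asserts the termwise sign ``under the assumptions made,'' whereas your regrouping of $\tfrac{d}{dx}\log(\phi/\psi)$ into the two nonnegative brackets $\beta\rho(\alpha)-\beta^*\rho(\alpha^*)$ (via the decreasing factor $\sigma(\alpha)=\tfrac{1}{t(t^{\alpha}-1)}$) and $g(\alpha^*)-g(\alpha)$ (via the substitution $s=\alpha\log t$ and $e^{s}>1+s$) actually supplies the verification the paper omits.
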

	
	\begin{proof}
		See Appendix.
	\end{proof}
	
	The following example illustrates Theorem \ref{theorem3.12}, for $n=3$.
	
	\begin{example}\label{example3.8}
		Set $\boldsymbol{p}=(0.2,0.4,0.4)$, $\boldsymbol{p^*}=(0.3,0.5,0.2)$,    $\boldsymbol{\alpha}=(2,4,6)$, $\boldsymbol{\beta}=(25,13,9)$, $\boldsymbol{\alpha^*}=(8,10,12)$, and $\boldsymbol{\beta^*}=(3,4,1)$. Observe that $\max\{2,4,6\}\leq\min\{8,10,12\}$ and $\max\{24,40,12\}\leq\min\{50,52,54\}$. Thus, all the conditions of Theorem \ref{theorem3.12} are satisfied. Based on the numerical values of the parameters, the ratio of the pdfs of the MRVs $R_3(\boldsymbol{X}_{\boldsymbol{\alpha},\boldsymbol{\beta}};\boldsymbol{p})$ and $R_3(\boldsymbol{X}_{\boldsymbol{\alpha^*},\boldsymbol{\beta^*}};\boldsymbol{p^*})$ is provided in Figure \ref{figure7(a)},
		which readily establishes that $R_3(\boldsymbol{X}_{\boldsymbol{\alpha},\boldsymbol{\beta}};\boldsymbol{p})\geq_{lr}R_3(\boldsymbol{X}_{\boldsymbol{\alpha^*},\boldsymbol{\beta^*}};\boldsymbol{p^*})$, validating the result in Theorem \ref{theorem3.12}.
	\end{example}
	
	Next, present a counterexample to show that the condition $\max\{\alpha_1,\ldots,\alpha_n\}\leq\min\{\alpha_1^*,\ldots,\alpha_n^*\}$ is necessary for establishing the likelihood ratio order in Theorem \ref{theorem3.12}.
	
	\begin{counterexample}\label{counterexample3.7}
		Let $\boldsymbol{p}=(0.1,0.2,0.7)$, $\boldsymbol{p^*}=(0.6,0.3,0.1)$,  $\boldsymbol{\alpha}=(3,6,9)$, $\boldsymbol{\beta}=(14,15,11)$, $\boldsymbol{\alpha^*}=(5,8,12)$, and $\boldsymbol{\beta^*}=(4,2,3)$. Clearly, $\max\{3,6,9\}\nleq\min\{5,8,12\}$ and $\max\{20,16,36\}\leq\min\{42,90,99\}$. Thus, all the conditions of Theorem \ref{theorem3.12} are satisfied except $\max\{3,6,9\}\leq\min\{5,8,12\}$. Now, the ratio of the pdfs of the MRVs
		$R_3(\boldsymbol{X}_{\boldsymbol{\alpha},\boldsymbol{\beta}};\boldsymbol{p})$ and $R_3(\boldsymbol{X}_{\boldsymbol{\alpha^*},\boldsymbol{\beta^*}};\boldsymbol{p^*})$ is depicted in Figure \ref{figure7(b)}, and we see that the ratio is a nonmonotone function in $x>0$. Thus, $R_3(\boldsymbol{X}_{\boldsymbol{\alpha},\boldsymbol{\beta}};\boldsymbol{p})\ngeq_{lr}R_3(\boldsymbol{X}_{\boldsymbol{\alpha^*},\boldsymbol{\beta^*}};\boldsymbol{p^*})$. In other
		words, the likelihood ratio order between $R_3(\boldsymbol{X}_{\boldsymbol{\alpha},\boldsymbol{\beta}};\boldsymbol{p})$ and $R_3(\boldsymbol{X}_{\boldsymbol{\alpha^*},\boldsymbol{\beta^*}};\boldsymbol{p^*})$ in Theorem \ref{theorem3.12} does not hold.
	\end{counterexample}

\begin{figure}
	\begin{center}
		\subfigure[]{\label{figure7(a)}\includegraphics[width=3.2in]{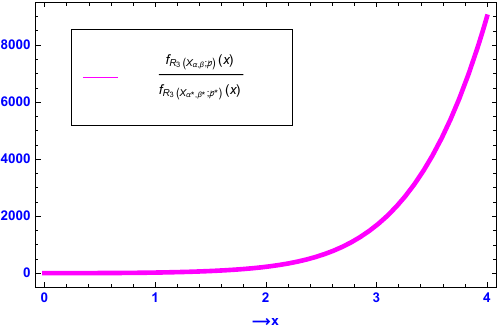}}
		\subfigure[]{\label{figure7(b)}\includegraphics[width=3.08in]{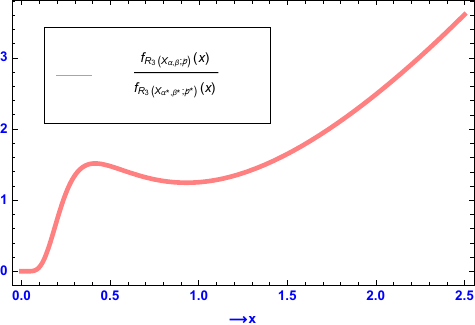}}
		\caption{(a) Plot of the ratio of the pdfs of $R_3(\boldsymbol{X}_{\boldsymbol{\alpha},\boldsymbol{\beta}};\boldsymbol{p})$ and $R_3(\boldsymbol{X}_{\boldsymbol{\alpha^*},\boldsymbol{\beta^*}};\boldsymbol{p^*})$ in Example \ref{example3.8}.
			(b) Plot of the ratio of the pdfs of $R_3(\boldsymbol{X}_{\boldsymbol{\alpha},\boldsymbol{\beta}};\boldsymbol{p})$ and $R_3(\boldsymbol{X}_{\boldsymbol{\alpha^*},\boldsymbol{\beta^*}};\boldsymbol{p^*})$ in Counterexample \ref{counterexample3.7}.}
	\end{center}
\end{figure}

	\section{Concluding remarks}
	In this paper, MMs are considered as suitable tools for analyzing population heterogeneity. We are interested in heterogeneous populations with distinct components such as lifetime. We have derived some sufficient conditions for the
	comparison between two FMMs of $\mathcal{IK}$ distributed components with respect to usual stochastic order, reversed hazard rate order, likelihood ratio order, ageing faster order in terms of reversed hazard rate order corresponding to the heterogeneity in the model
	parameters in the sense of some majorization orders, namely, weakly
	supermajorized and weakly submajorized order.
	Here, we have considered heterogeneity in one parameter, two parameters, and three parameters, respectively. We have presented some numerical examples
	and counterexamples to illustrate the established results in this
	paper.
	
	The presented results of this paper are mostly theoretical. However, one may find some applications of the established results. Below, we consider an example. 
		
		Assume two engineering systems, with components produced by different companies. It is further reasonable to assume that the components have different reliability characteristics. Each of the components can operate in $n$ operational regimes with corresponding different probabilities $p_i$ and $p_i^*$. Let the lifetimes of the $i$th regime have different distributions, say $\mathcal{IK}(\alpha_i,\beta)$ and $\mathcal{IK}(\alpha_i^*,\beta)$, respectively. Then, the important question is: which of these two systems perform better in some stochastic sense? By using Theorem \ref{theorem3.1}, we conclude that under the condition $\boldsymbol{p^*}\preccurlyeq_w \boldsymbol{p}$, the first system performs better than the second system. Similar applications can be found for the other established results.
	
	It is naturally be of interest that one can extend this work
	with respect to some stronger stochastic orders such as hazard rate
	order, or
	with respect to the variability orders, like star order,
	dispersive order, Lorenz order, right-spread order, convex transform order, increasing convex order etc. Future research on the generalizations of these
	findings may be considered.

	\section*{Acknowledgements}
	The financial support (vide D.O.No. F. 14-34/2011 (CPP-II) dated 11.01.2013, F.No. 16-9(June 2019)/2019(NET/CSIR), UGC-Ref.No.:1238/(CSIR-UGC NET JUNE 2019)) from the University Grants Commission (UGC), Government of India, is sincerely acknowledged with thanks by Raju Bhakta.
	
	\section*{Disclosure statement}
	The authors have no conflict of interest/competing interests to
	declare.
	
	\bibliography{ref}
	% Right coding
	\appendix
	{\bf Proof of Theorem \ref{theorem3.1}.}\setcounter{equation}{0}
	\begin{proof}
		Denote $\xi(\boldsymbol{p})=\bar{F}_{R_n(\boldsymbol{X}_{\boldsymbol{\alpha},\beta};\boldsymbol{p})}(x)$. Differentiating $\xi(\boldsymbol{p})$ partially with respect to $p_i$, we obtain
		\begin{eqnarray}\label{eq-A1}
			\frac{\partial\xi(\boldsymbol{p})}{\partial p_i}=1-(1-(1+x)^{-\alpha_i})^\beta\geq 0.
		\end{eqnarray}
		Under the assumptions made, we have $(\alpha_i-\alpha_j)(p_i-p_j)\leq0$, for $1\leq i\leq j\leq n$. This implies two possibilities: $\{p_i\geq p_j~\&~\alpha_i\leq\alpha_j\}$ and $\{p_i\leq p_j~\&~\alpha_i\geq\alpha_j\}$. Here, the proof for $\{p_i\geq p_j~\&~\alpha_i\leq\alpha_j\}$ is provided, since the proof under other can follows similarly. Using (\ref{eq-A1}),
		\begin{eqnarray}\label{eq-A2}
			\frac{\partial\xi(\boldsymbol{p})}{\partial p_i}-\frac{\partial \xi(\boldsymbol{p})}{\partial p_j}=\{1-(1-(1+x)^{-\alpha_i})^\beta\}- \{1-(1-(1+x)^{-\alpha_j})^\beta\},
		\end{eqnarray}
		which is clearly non-negative using Lemma \ref{lemma3.1}. Thus, from (\ref{eq-A1}) $\&$ (\ref{eq-A2}), we have
		\begin{eqnarray}\label{eq-A3}
			\frac{\partial\xi(\boldsymbol{p})}{\partial p_i}\geq\frac{\partial\xi(\boldsymbol{p})}{\partial p_j}\geq 0.
		\end{eqnarray}
		Now, the proof is completed using Theorem $3.A.7$ of \cite{marshall2011inequalities}. This completes the proof of the theorem.
	\end{proof}
	
	\noindent{\bf Proof of Theorem \ref{theorem3.2}.}
	\begin{proof}
		Without loss of generality, we consider $0<\beta_1\leq\ldots\leq\beta_n$. Since $(\boldsymbol{\beta},\boldsymbol{p}), (\boldsymbol{\beta},\boldsymbol{p^*})\in\mathcal{L}_n$, thus
		$p_1\geq\ldots\geq p_n>0$ and $p_1^*\geq\ldots\geq p_n^*>0$.
		Now, define
		\begin{eqnarray}\label{eq-A4}
			\phi(\boldsymbol{p})=-\bar{F}_{R_n(\boldsymbol{X}_{\alpha,\boldsymbol{\beta}};\boldsymbol{p})}(x).
		\end{eqnarray}
		Furthermore, for $1\leq i\leq j\leq n$, clearly $\beta_i\leq\beta_j$ and $p_i\geq p_j$. Now, after differentiating (\ref{eq-A4}) partially with respect to $p_i$ and $p_j$, we obtain
		\begin{eqnarray}\label{eq-A5}
			\frac{\partial\phi(\boldsymbol{p})}{\partial p_i}-\frac{\partial \phi(\boldsymbol{p})}{\partial p_j}=\{1-(1-(1+x)^{-\alpha})^{\beta_j}\}-\{1-(1-(1+x)^{-\alpha})^{\beta_i}\}.
		\end{eqnarray}
		Using Lemma \ref{lemma3.1} in (\ref{eq-A5}), it is easy to get that $\frac{\partial\phi(\boldsymbol{p})}{\partial p_i}-\frac{\partial \phi(\boldsymbol{p})}{\partial p_j}$ is non-negative. Thus, for $1\leq i\leq j\leq n$, we obtain
		\begin{eqnarray}\label{eq-A6}
			0\geq\frac{\partial\phi(\boldsymbol{p})}{\partial
				p_i}\geq\frac{\partial\phi(\boldsymbol{p})}{\partial p_j},
		\end{eqnarray}
		since $\frac{\partial\phi(\boldsymbol{p})}{\partial p_i}\leq 0$. Now, using Theorem $3.A.7$ of \cite{marshall2011inequalities}, the proof can be completed.
	\end{proof}

	\noindent{\bf Proof of Theorem \ref{theorem3.3}.}
	\begin{proof}
		Without loss of generality, we assume $p_1\geq\ldots\geq
		p_n>0$. Thus, from the assumptions made, we have $\alpha_1\leq\ldots\leq\alpha_n$ and $\alpha_1^*\leq\ldots\leq\alpha_n^*$. Define
		\begin{eqnarray}\label{eq-A7}
			\psi(\boldsymbol{\alpha})=\bar{F}_{R_n(\boldsymbol{X}_{\boldsymbol{\alpha},\beta};\boldsymbol{p})}(x).
		\end{eqnarray}
		From (\ref{eq-A7}), we obtain the partial derivative of $\psi(\boldsymbol{\alpha})$ with respect to $\alpha_i$ as
		\begin{eqnarray}\label{eq-A8}
			\frac{\partial\psi(\boldsymbol{\alpha})}{\partial\alpha_i}=-\beta\log(1+x)p_i(1+x)^{-\alpha_i}(1-(1+x)^{-\alpha_i})^{\beta-1},
		\end{eqnarray}
		which is clearly non-positive. Thus, $\psi(\boldsymbol{\alpha})$ is decreasing with respect to $\alpha_i$. Further, using Lemma \ref{lemma3.1}, for $1\leq i\leq j\leq n$, we obtain after some calculations as
		\begin{eqnarray}\label{eq-A9}
			\frac{\partial\psi(\boldsymbol{\alpha})}{\partial\alpha_i}\leq\frac{\partial\psi(\boldsymbol{\alpha})}{\partial\alpha_j}\leq 0.
		\end{eqnarray}
		Now, from Lemma $2.4$ of \cite{bhakta2023stochastic} it can be shown that $\psi(\boldsymbol{\alpha})$ is Schur-convex with respect to $\boldsymbol{\alpha}$. Thus, the remaining proof of the theorem follows from Theorem $3.A.8$ of \cite{marshall2011inequalities}, p. $87$. This completes the proof of the Theorem.
	\end{proof}

	\noindent{\bf Proof of Theorem \ref{theorem3.4}.}
	\begin{proof}
		The theorem will be proved if the conditions of Lemma \ref{lemma2.1} are satisfied. Here, $\bar{F}_{R_2(\boldsymbol{X}_{\boldsymbol{\alpha},\beta};\boldsymbol{p})}(x)$
		is clearly permutation invariant on $\mathcal{L}_2$. Further,
		\begin{eqnarray}\label{eq-A10}
			\Delta_1(\boldsymbol{p},\boldsymbol{\alpha})&=&
			(p_1-p_2)\left(\frac{\partial\bar{F}_{R_2(\boldsymbol{X}_{\boldsymbol{\alpha},\beta};\boldsymbol{p})}(x)}{\partial p_1}-\frac{\partial\bar{F}_{R_2(\boldsymbol{X}_{\boldsymbol{\alpha},\beta};\boldsymbol{p})}(x)}{\partial p_2}\right)\nonumber\\
			&&+(\alpha_1-\alpha_2)\left(\frac{\partial\bar{F}_{R_2(\boldsymbol{X}_{\boldsymbol{\alpha},\beta};\boldsymbol{p})}(x)}{\partial\alpha_1}-\frac{\partial\bar{F}_{R_2(\boldsymbol{X}_{\boldsymbol{\alpha},\beta};\boldsymbol{p})}(x)}{\partial\alpha_2}\right)\nonumber\\
			&=&(p_1-p_2)\bigg([1-(1-(1+x)^{-\alpha_1})^\beta]-[1-(1-(1+x)^{-\alpha_2})^\beta]\bigg)\nonumber\\
			&&+\beta\log(1+x)(\alpha_1-\alpha_2)\bigg(p_2(1+x)^{-\alpha_2}(1-(1+x)^{-\alpha_2})^{\beta-1})\nonumber\\
			&&-p_1(1+x)^{-\alpha_1}(1-(1+x)^{-\alpha_1})^{\beta-1}\bigg).
		\end{eqnarray}
		Under the assumption made, we have $(\boldsymbol{p},\boldsymbol{\alpha})\in\mathcal{L}_2$, that is, either $\{p_1\geq p_2~\&~\alpha_1\leq\alpha_2\}$ holds or $\{p_1\leq p_2~\&~\alpha_1\geq\alpha_2\}$ holds. We present the proof for the case $\{p_1\geq p_2~\&~\alpha_1\leq\alpha_2\}$. The proof for the other case $\{p_1\leq p_2~\&~\alpha_1\geq\alpha_2\}$ is similar. Using Lemma \ref{lemma3.1}, the first term in the right hand side of (\ref{eq-A10}) is clearly non-negative, since $\alpha_1\leq\alpha_2$. Further, using Lemma \ref{lemma3.2}, the second term in (\ref{eq-A10}) can be shown to be non-negative. Thus,
		\begin{eqnarray}\label{eq-A11}
			\Delta_1(\boldsymbol{p},\boldsymbol{\alpha})\geq 0,
		\end{eqnarray}
		satisfying the second condition of Lemma \ref{lemma2.1}. This completes the proof of the Theorem.
	\end{proof}

	\noindent{\bf Proof of Theorem \ref{theorem3.7}.}
	\begin{proof}
		We note that the proof of this theorem follows using Lemma \ref{lemma2.1}. It is easy to notice that $R_2(\boldsymbol{X}_{\alpha,\boldsymbol{\beta}};\boldsymbol{p})$ is permutation invariant on $\mathcal{L}_2$, confirming the condition $(i)$ of Lemma \ref{lemma2.1}. To check condition $(ii)$ of Lemma \ref{lemma2.1}, we consider
		\begin{eqnarray}\label{eq-A12}
			\Delta_2(\boldsymbol{p},\boldsymbol{\beta})&=&
			(p_1-p_2)\left(\frac{\partial\bar{F}_{R_2(\boldsymbol{X}_{\alpha,\boldsymbol{\beta}};\boldsymbol{p})}(x)}{\partial p_1}-\frac{\partial\bar{F}_{R_2(\boldsymbol{X}_{\alpha,\boldsymbol{\beta}};\boldsymbol{p})}(x)}{\partial p_2}\right)\nonumber\\
			&&+(\beta_1-\beta_2)\left(\frac{\partial\bar{F}_{R_2(\boldsymbol{X}_{\alpha,\boldsymbol{\beta}};\boldsymbol{p})}(x)}{\partial\beta_1}-\frac{\partial\bar{F}_{R_2(\boldsymbol{X}_{\alpha,\boldsymbol{\beta}};\boldsymbol{p})}(x)}{\partial\beta_2}\right)\nonumber\\
			&=&(p_1-p_2)\Big([1-(1-(1+x)^{-\alpha})^{\beta_1}]-[1-(1-(1+x)^{-\alpha})^{\beta_2}]\Big)\nonumber\\
			&&+(\beta_1-\beta_2)\log(1-(1+x)^{-\alpha})\Big(p_2(1-(1+x)^{-\alpha})^{\beta_2}-p_1(1-(1+x)^{-\alpha})^{\beta_1}\Big).
		\end{eqnarray}
		Here, we have assumed that $(\boldsymbol{p},\boldsymbol{\beta})\in\mathcal{L}_2$, implying either $\{p_1\geq p_2~\&~\beta_1\leq\beta_2\}$ or $\{p_1\leq p_2~\&~\beta_1\geq\beta_2\}$. The proof will be presented for $\{p_1\geq p_2~\&~\beta_1\leq\beta_2\}$. For other case, it is similar. For $p_1\geq p_2$ and $\beta_1\leq\beta_2$, using Lemma \ref{lemma3.1}, it can be shown after some calculations that the first term of right hand side in (\ref{eq-A12}) is non-positive. Further, from Lemma \ref{lemma3.3}, the second term in the right hand side of (\ref{eq-A12}) is non-positive when $p_1\geq p_2$ and $\beta_1\leq\beta_2$. Combining these, we obtain
		\begin{eqnarray}\label{eq-A13}
			\Delta_2(\boldsymbol{p},\boldsymbol{\beta})\leq 0,
		\end{eqnarray}
		which proves that the second condition of Lemma \ref{lemma2.1} is also satisfied. Thus, the proof is completed.
	\end{proof}

	\noindent{\bf Proof of Theorem \ref{theorem3.10}.}
	\begin{proof}
		Here, it is sufficient to establish that the ratio $\frac{\varphi_1(x)}{\varphi_2(x)}=\frac{\tilde{r}_{R_n(\boldsymbol{X}_{\alpha,\boldsymbol{\beta}};\boldsymbol{p})}(x)}{\tilde{r}_{R_n(\boldsymbol{X}_{\alpha,\boldsymbol{\beta^*}};\boldsymbol{p^*})}(x)}$ is non-increasing with respect to $x$, where
		\begin{eqnarray}\label{eq-A14}
			\frac{\varphi_1(x)}{\varphi_2(x)}=\frac{f_{R_n(\boldsymbol{X}_{\alpha,\boldsymbol{\beta}};\boldsymbol{p})}(x)F_{R_n(\boldsymbol{X}_{\alpha,\boldsymbol{\beta^*}};\boldsymbol{p^*})}(x)}{f_{R_n(\boldsymbol{X}_{\alpha,\boldsymbol{\beta^*}};\boldsymbol{p^*})}(x)F_{R_n(\boldsymbol{X}_{\alpha,\boldsymbol{\beta}};\boldsymbol{p})}(x)}.
		\end{eqnarray}
		On differentiation (\ref{eq-A14}) with respect to $x$, we obtain
		\begin{eqnarray}\label{eq-A15}
			\frac{\partial}{\partial x}\left\lbrace \frac{\tilde{r}_{R_n(\boldsymbol{X}_{\alpha,\boldsymbol{\beta}};\boldsymbol{p})}(x)}{\tilde{r}_{R_n(\boldsymbol{X}_{\alpha,\boldsymbol{\beta^*}};\boldsymbol{p^*})}(x)} \right\rbrace&\stackrel{sign}{=}&\varphi_1^{\prime}(x)\varphi_2(x)-\varphi_1(x)\varphi_2^{\prime}(x)\nonumber\\
			&=&f_{R_n(\boldsymbol{X}_{\alpha,\boldsymbol{\beta}};\boldsymbol{p})}^{\prime}(x)F_{R_n(\boldsymbol{X}_{\alpha,\boldsymbol{\beta}};\boldsymbol{p})}(x)f_{R_n(\boldsymbol{X}_{\alpha,\boldsymbol{\beta^*}};\boldsymbol{p^*})}(x)F_{R_n(\boldsymbol{X}_{\alpha,\boldsymbol{\beta^*}};\boldsymbol{p^*})}(x)\nonumber\\
			&&+f_{R_n(\boldsymbol{X}_{\alpha,\boldsymbol{\beta}};\boldsymbol{p})}(x)f_{R_n(\boldsymbol{X}_{\alpha,\boldsymbol{\beta^*}};\boldsymbol{p^*})}^2(x)F_{R_n(\boldsymbol{X}_{\alpha,\boldsymbol{\beta}};\boldsymbol{p})}(x)\nonumber\\
			&&-f_{R_n(\boldsymbol{X}_{\alpha,\boldsymbol{\beta}};\boldsymbol{p})}^2(x)f_{R_n(\boldsymbol{X}_{\alpha,\boldsymbol{\beta^*}};\boldsymbol{p^*})}(x)F_{R_n(\boldsymbol{X}_{\alpha,\boldsymbol{\beta^*}};\boldsymbol{p^*})}(x)\nonumber\\
			&&-f_{R_n(\boldsymbol{X}_{\alpha,\boldsymbol{\beta}};\boldsymbol{p})}(x)F_{R_n(\boldsymbol{X}_{\alpha,\boldsymbol{\beta}};\boldsymbol{p})}(x)f_{R_n(\boldsymbol{X}_{\alpha,\boldsymbol{\beta^*}};\boldsymbol{p^*})}^{\prime}(x)F_{R_n(\boldsymbol{X}_{\alpha,\boldsymbol{\beta^*}};\boldsymbol{p^*})}(x)\nonumber\\
			&=&\xi(x),~\text{(say)}.
		\end{eqnarray}
		Our goal is to show that $\xi(x)$ is non-positive. Now, using the model assumptions, from (\ref{eq-A15}), we obtain
		\allowdisplaybreaks{\begin{eqnarray}
				\xi(x)&=&\Biggl\{\Bigg( \sum_{i=1}^{n}p_i\alpha\beta_i\big[\alpha(\beta_i-1)(1+x)^{-2(\alpha+1)}(1-(1+x)^{-\alpha})^{\beta_i-2}\nonumber\\
				&&-(\alpha+1)(1+x)^{-(\alpha+2)}(1-(1+x)^{-\alpha})^{\beta_i-1}\big]\Bigg)\times\Bigg(\sum_{i=1}^{n}p_i(1-(1+x)^{-\alpha})^{\beta_i}\Bigg)\nonumber\\
				&&\times\Bigg(\sum_{i=1}^{n}p_i^*\alpha\beta_i^*(1+x)^{-(\alpha+1)}(1-(1+x)^{-\alpha})^{\beta_i^*-1}\Bigg)\times\Bigg(\sum_{i=1}^{n}p_i^*(1-(1+x)^{-\alpha})^{\beta_i^*}\Bigg)\Biggr\}\nonumber\\
				&&+\Biggl\{\Bigg(\sum_{i=1}^{n}p_i\alpha\beta_i(1+x)^{-(\alpha+1)}(1-(1+x)^{-\alpha})^{\beta_i-1}\Bigg)\times\Bigg(\sum_{i=1}^{n}p_i(1-(1+x)^{-\alpha})^{\beta_i}\Bigg)\nonumber\\
				&&\times\Bigg(\sum_{i=1}^{n}p_i^*\alpha\beta_i^*(1+x)^{-(\alpha+1)}(1-(1+x)^{-\alpha})^{\beta_i^*-1}\Bigg)^2\Biggr\}-\Biggl\{\Bigg(\sum_{i=1}^{n}p_i\alpha\beta_i(1+x)^{-(\alpha+1)}\nonumber\\
				&&(1-(1+x)^{-\alpha})^{\beta_i-1}\Bigg)^2\times\Bigg(\sum_{i=1}^{n}p_i^*\alpha\beta_i^*(1+x)^{-(\alpha+1)}(1-(1+x)^{-\alpha})^{\beta_i^*-1}\Bigg)\nonumber\\
				&&\times\Bigg(\sum_{i=1}^{n}p_i^*(1-(1+x)^{-\alpha})^{\beta_i^*}\Bigg)\Biggr\}-\Biggl\{\Bigg(\sum_{i=1}^{n}p_i\alpha\beta_i(1+x)^{-(\alpha+1)}(1-(1+x)^{-\alpha})^{\beta_i-1}\Bigg)\nonumber\\
				&&\times\Bigg(\sum_{i=1}^{n}p_i(1-(1+x)^{-\alpha})^{\beta_i}\Bigg)\times\Bigg(\sum_{i=1}^{n}p_i^*\alpha\beta_i^*\Big[\alpha(\beta_i^*-1)(1+x)^{-2(\alpha+1)}(1-(1+x)^{-\alpha})^{\beta_i^*-2}\nonumber\\
				&&-(\alpha+1)(1+x)^{-(\alpha+2)}(1-(1+x)^{-\alpha})^{\beta_i^*-1}\Big]\Bigg)\times\Bigg(\sum_{i=1}^{n}p_i^*(1-(1+x)^{-\alpha})^{\beta_i^*}\Bigg)\Biggr\}\nonumber\\
				&=&\sum_{i=1}^{n}\sum_{j=1}^{n}\sum_{k=1}^{n}\sum_{l=1}^{n}p_ip_jp_k^*p_l^*\alpha^2\beta_i\beta_k^*(1+x)^{-(2\alpha+3)}(1-(1+x)^{-\alpha})^{\beta_i-1}\Bigl\{\alpha(\beta_i-1)(1+x)^{-\alpha}\nonumber\\
				&&(1-(1+x)^{-\alpha})^{-1}-(\alpha+1)\Bigr\}(1-(1+x)^{-\alpha})^{\beta_j}(1-(1+x)^{-\alpha})^{\beta_k^*+\beta_l^*-1}\nonumber\\
				&&+\sum_{i=1}^{n}\sum_{j=1}^{n}\sum_{k=1}^{n}\sum_{l=1}^{n}p_ip_jp_k^*p_l^*\alpha^3\beta_i\beta_k^*\beta_l^*(1+x)^{-3(\alpha+1)}(1-(1+x)^{-\alpha})^{\beta_i+\beta_j+\beta_k^*+\beta_l^*-3}\nonumber\\
				&&-\sum_{i=1}^{n}\sum_{j=1}^{n}\sum_{k=1}^{n}\sum_{l=1}^{n}p_ip_jp_k^*p_l^*\alpha^3\beta_i\beta_j\beta_k^*(1+x)^{-3(\alpha+1)}(1-(1+x)^{-\alpha})^{\beta_i+\beta_j+\beta_k^*+\beta_l^*-3}\nonumber\\
				&&-\sum_{i=1}^{n}\sum_{j=1}^{n}\sum_{k=1}^{n}\sum_{l=1}^{n}p_ip_jp_k^*p_l^*\alpha^2\beta_i\beta_k^*(1+x)^{-(2\alpha+3)}(1-(1+x)^{-\alpha})^{\beta_i+\beta_j+\beta_k^*+\beta_l^*-2}\nonumber\\
				&&\Bigl\{\alpha(\beta_k^*-1)(1+x)^{-\alpha}(1-(1+x)^{-\alpha})^{-1}-(\alpha+1)\Bigr\}\nonumber
		\end{eqnarray}}
		\begin{eqnarray}
			&=&\sum_{i=1}^{n}\sum_{j=1}^{n}\sum_{k=1}^{n}\sum_{l=1}^{n}p_ip_jp_k^*p_l^*\alpha^2\beta_i\beta_k^*(1+x)^{-(2\alpha+3)}(1-(1+x)^{-\alpha})^{\beta_i+\beta_j+\beta_k^*+\beta_l^*-2}\Delta_{i,j,k,l}(x),\nonumber
		\end{eqnarray}
		where
		\begin{eqnarray}\label{eq-A16}
			\Delta_{i,j,k,l}(x)&=&\Bigl\{\alpha(\beta_i-1)(1+x)^{-\alpha}(1-(1+x)^{-\alpha})^{-1}-(\alpha+1)\Bigr\}\nonumber\\
			&&-\Bigl\{\alpha(\beta_k^*-1)(1+x)^{-\alpha}(1-(1+x)^{-\alpha})^{-1}-(\alpha+1)\Bigr\}\nonumber\\
			&&+\alpha\beta_l^*(1+x)^{-\alpha}(1-(1+x)^{-\alpha})^{-1}-\alpha\beta_j(1+x)^{-\alpha}(1-(1+x)^{-\alpha})^{-1}.
		\end{eqnarray}
		Consider $1\leq i,j\leq n$. Under the assumptions made, from (\ref{eq-A16}), we obtain $\Delta_{i,j,k,l}(x)\leq 0$, and hence $\xi(x)\leq0$. This completes the proof of the theorem.
	\end{proof}
	\noindent{\bf Proof of Theorem \ref{theorem3.11}.}
	\begin{proof}
		The proof will be completed if we show that
		\begin{eqnarray}\label{eq-A17}
			\frac{F_{R_n(\boldsymbol{X}_{\boldsymbol{\alpha},\boldsymbol{\beta}};\boldsymbol{p})}(x)}{F_{R_n(\boldsymbol{X}_{\boldsymbol{\alpha^*},\boldsymbol{\beta^*}};\boldsymbol{p^*})}(x)}=\frac{\sum_{i=1}^{n}p_i(1-(1+x)^{-\alpha_i})^{\beta_i}}{\sum_{i=1}^{n}p_i^*(1-(1+x)^{-\alpha_i^*})^{\beta_i^*}}=\frac{\phi_1(x)}{\phi_2(x)},~\text{(say)},
		\end{eqnarray}
		is non-increasing with respect to $x>0$. The partial derivative of (\ref{eq-A17}) with respect to $x$ is
		\begin{eqnarray}\label{eq-A18}
			\frac{\partial}{\partial x}\left\lbrace \frac{F_{R_n(\boldsymbol{X}_{\boldsymbol{\alpha},\boldsymbol{\beta}};\boldsymbol{p})}(x)}{F_{R_n(\boldsymbol{X}_{\boldsymbol{\alpha^*},\boldsymbol{\beta^*}};\boldsymbol{p^*})}(x)}\right\rbrace&\stackrel{sign}{=}&\phi^{\prime}_1(x)\phi_2(x)-\phi_1(x)\phi^{\prime}_2(x)\nonumber\\
			&=&\xi(x),
		\end{eqnarray}
		where
		\begin{eqnarray}\label{eq-A19}
			\xi(x)&=&\left(\sum_{i=1}^{n}p_i\alpha_i\beta_i(1+x)^{-(\alpha_i+1)}(1-(1+x)^{-\alpha_i})^{\beta_i-1}\right) \left(\sum_{i=1}^{n}p_i^*(1-(1+x)^{-\alpha_i^*})^{\beta_i^*}\right)\nonumber\\
			&&-\left(\sum_{i=1}^{n}p_i(1-(1+x)^{-\alpha_i})^{\beta_i} \right)\left(\sum_{i=1}^{n}p_i^*\alpha_i^*\beta_i^*(1+x)^{-(\alpha_i^*+1)}(1-(1+x)^{-\alpha_i^*})^{\beta_i^*-1}\right)\nonumber\\
			&=&\sum_{i=1}^{n}\sum_{j=1}^{n}p_ip_j^*(1-(1+x)^{-\alpha_i})^{\beta_i-1}(1-(1+x)^{-\alpha_j^*})^{\beta_j^*-1}\big[\alpha_i\beta_i(1+x)^{-(\alpha_i+1)}\nonumber\\
			&&(1-(1+x)^{-\alpha_j^*})-\alpha_j^*\beta_j^*(1+x)^{-(\alpha_j^*+1)}(1-(1+x)^{-\alpha_i})\big].
		\end{eqnarray}
		For $1\leq i,j\leq n$, under the assumptions made, it can be shown that $\xi(x)\leq 0$, as desired.
	\end{proof}
	\noindent{\bf Proof of Theorem \ref{theorem3.12}.}
	\begin{proof}
		To prove the result, it is required to establish that
		\begin{eqnarray}\label{eq-A20}
			\frac{f_{R_n(\boldsymbol{X}_{\boldsymbol{\alpha},\boldsymbol{\beta}};\boldsymbol{p})}(x)}{f_{R_n(\boldsymbol{X}_{\boldsymbol{\alpha^*},\boldsymbol{\beta^*}};\boldsymbol{p^*})}(x)}=\frac{\sum_{i=1}^{n}p_i\alpha_i\beta_i(1+x)^{-(\alpha_i+1)}(1-(1+x)^{-\alpha_i})^{\beta_i-1}}{\sum_{i=1}^{n}p_i^*\alpha_i^*\beta_i^*(1+x)^{-(\alpha_i^*+1)}(1-(1+x)^{-\alpha_i^*})^{\beta_i^*-1}}=\xi(x),~\text{(say)},
		\end{eqnarray}
		is non-decreasing with respect to $x>0$. The derivative of $\xi(x)$
		with respect to $x$ is obtained as
		\allowdisplaybreaks{\begin{eqnarray}\label{eq-A21}
				\xi^{\prime}(x)&\stackrel{sign}{=}&\Bigg(\sum_{i=1}^{n}p_i\alpha_i\beta_i\big[\alpha_i(\beta_i-1)(1+x)^{-2(\alpha_i+1)}(1-(1+x)^{-\alpha_i})^{\beta_i-2}\nonumber\\
				&&-(\alpha_i+1)(1+x)^{-(\alpha_i+2)}(1-(1+x)^{-\alpha_i})^{\beta_i-1}\big]\Bigg)\nonumber\\
				&&\times\Bigg(\sum_{i=1}^{n}p_i^*\alpha_i^*\beta_i^*(1+x)^{-(\alpha_i^*+1)}(1-(1+x)^{-\alpha_i^*})^{\beta_i^*-1}\Bigg)\nonumber\\
				&&-\Bigg(\sum_{i=1}^{n}p_i\alpha_i\beta_i(1+x)^{-(\alpha_i+1)}(1-(1+x)^{-\alpha_i})^{\beta_i-1}\Bigg)\nonumber\\
				&&\times\Bigg(\sum_{i=1}^{n}p_i^*\alpha_i^*\beta_i^*\big[\alpha_i^*(\beta_i^*-1)(1+x)^{-2(\alpha_i^*+1)}(1-(1+x)^{-\alpha_i^*})^{\beta_i^*-2}\nonumber\\
				&&-(\alpha_i^*+1)(1+x)^{-(\alpha_i^*+2)}(1-(1+x)^{-\alpha_i^*})^{\beta_i^*-1}\big]\Bigg)\nonumber\\
				&=&\Bigg(\sum_{i=1}^{n}\sum_{j=1}^{n}p_ip_j^*\alpha_i\beta_i\alpha_j^*\beta_j^*(1+x)^{-(\alpha_j^*+1)}(1-(1+x)^{-\alpha_j^*})^{\beta_j^*-1}\nonumber\\
				&&\times\big[\alpha_i(\beta_i-1)(1+x)^{-2(\alpha_i+1)}(1-(1+x)^{-\alpha_i})^{\beta_i-2}\nonumber\\
				&&-(\alpha_i+1)(1+x)^{-(\alpha_i+2)}(1-(1+x)^{-\alpha_i})^{\beta_i-1}\big]\Bigg)\nonumber\\
				&&-\Bigg(\sum_{i=1}^{n}\sum_{j=1}^{n}p_ip_j^*\alpha_i\beta_i\alpha_j^*\beta_j^*(1+x)^{-(\alpha_i+1)}(1-(1+x)^{-\alpha_i})^{\beta_i-1}\nonumber\\
				&&\times\big[\alpha_j^*(\beta_j^*-1)(1+x)^{-2(\alpha_j^*+1)}(1-(1+x)^{-\alpha_j^*})^{\beta_j^*-2}\nonumber\\
				&&-(\alpha_j^*+1)(1+x)^{-(\alpha_j^*+2)}(1-(1+x)^{-\alpha_j^*})^{\beta_j^*-1}\big]\Bigg).
		\end{eqnarray}}
		Consider $1\leq i,j\leq n$. Under the assumptions made, from (\ref{eq-A21}), we obtain $\xi^{\prime}(x)\geq 0$, implying that $\xi(x)$ is non-decreasing with respect to $x>0$. Hence the result follows.
	\end{proof}

\end{document}